\DeclareSymbolFont{cyrletters}{OT2}{wncyr}{m}{n}
\DeclareMathSymbol{\Sha}{\mathalpha}{cyrletters}{"58}
\newtheorem{defn0}{Definition}[section]
\newtheorem{prop0}[defn0]{Proposition}
\newtheorem{thm0}[defn0]{Theorem}
\newtheorem{lemma0}[defn0]{Lemma}
\newtheorem{claim0}[defn0]{Claim}
\newtheorem{corollary0}[defn0]{Corollary}
\newtheorem{example0}[defn0]{Example}
\newtheorem{remark0}[defn0]{Remark}
\newtheorem{assumption0}[defn0]{Assumption}
\newtheorem{conjecture0}[defn0]{Conjecture}
\newtheorem{notation0}[defn0]{Notation}
\newtheorem{question0}[defn0]{Question}
\newenvironment{definition}{\begin{defn0}\rm}{\end{defn0}}
\newenvironment{proposition}{\begin{prop0}}{\end{prop0}}
\newenvironment{theorem}{\begin{thm0}}{\end{thm0}}
\newenvironment{lemma}{\begin{lemma0}}{\end{lemma0}}
\newenvironment{corollary}{\begin{corollary0}}{\end{corollary0}}
\newenvironment{remark}{\begin{remark0}\rm}{\end{remark0}}
\newcommand{\Gal}{{\mathrm {Gal}}}
\newcommand{\Trace}{{\mathrm{Tr}}}
\newcommand{\M}{\mathrm{M}}
\newcommand{\Aut}{\mathrm{Aut}}
\newcommand{\Br}{\mathrm{Br}}
\newcommand{\GL}{{\mathrm{GL}}}
\newcommand{\End}{{\mathrm{End}}}
\newcommand{\Z}{{\mathbb Z}}
\newcommand{\A}{{\mathbb A}}
\newcommand{\Q}{{\mathbb Q}}
\newcommand{\C}{{\mathbb C}}
\newcommand{\R}{{\mathbb R}}
\newcommand{\N}{{\mathbb N}}
\newcommand{\cO}{{\mathcal O}}
\newcommand{\Res}{{\mathrm {Res}}}
\newcommand{\ra}{{\rightarrow}}
\newcommand{\Hom}{{\mathrm {Hom}}}
\title{Galois action on $\bar\Q$-isogeny classes \\ of abelian $L$-surfaces with quaternionic multiplication}
\author{Santiago Molina
}
\begin{document}

\address{Department of Mathematics, Universit\"at Bielefeld, Bielefeld (Germany)}
\email{santimolin@gmail.com}

\maketitle

\begin{abstract} 
We construct a projective Galois representation attached to an abelian $L$-surface with quaternionic multiplication, describing the Galois action on its Tate module.
We prove that such representation characterizes the Galois action on the isogeny class of the abelian $L$-surface, seen as a set of points of certain Shimura curves.
\end{abstract}

\section{Introduction}

Let $L$ be a number field.
An abelian variety
$A/\bar \Q$ is called an \emph{abelian $L$-variety} if for each
$\sigma\in\Gal(\bar \Q/L)$ there exists an isogeny
$\mu_\sigma:A^\sigma\ra A$ with the following compatibility condition:
$\psi\circ\mu_\sigma= \mu_\sigma\circ\psi^\sigma$, for all
$\psi\in\End(A)$. 
In this note, we deal with the two dimensional situation and the so-called \emph{fake elliptic curves} or \emph{abelian surfaces with quaternionic multiplication (QM)}, namely, pairs $(A,\imath)$ where $A$ is an abelian surface and $\imath$ is an embedding of a quaternion order $\cO$ into its endomorphism ring. We remark that, by setting $A=E\times E$, where $E$ is an elliptic curve, and $\imath$ the obvious embedding of $\cO=\M_2(\Z)$ into $\End(E\times E)$, the theory of elliptic curves can be seen as a special case.
In this scenario, we say that $(A,\imath)$ is an \emph{abelian $L$-surface with QM} if, for any $\sigma\in\Gal(\bar \Q/L)$, there exists an isogeny
$\mu_\sigma:A^\sigma\ra A$ as well, 
but with the above compatibility condition only satisfied for the image of the quaternionic multiplication $\imath$.

The moduli problem that classifies abelian surfaces with QM is solved by the classical \emph{Shimura curves} $X_\Gamma$. If $P\in X_\Gamma$ corresponds to an abelian $L$-surface with QM $(A,\imath)$, we can interpret the $\bar\Q$-isogeny class of $(A,\imath)$ as a set of points $[P]\subset X_\Gamma(\bar\Q)$. Directly from the definition, we can deduce that this set of points $[P]$ is stable under the action of $\Gal(\bar\Q/L)$. The main aim of this paper is to construct a projective Galois representation $\rho$ attached to $(A,\imath)$ of the form
\[
\rho:\Gal(\bar \Q/L)\longrightarrow (\cO\otimes\A_f)^\times/(\End(A,\imath)\otimes\Q)^\times,
\]
where $\A_f$ is the ring of finite adeles and $\End(A,\imath)$ is the set of endomorphisms commuting with every element of the image of $\imath$, and to give a concise description of the Galois action on the set of points $[P]$ through this representation $\rho$.
With this in mind, we have to give first a description of the $\bar\Q$-isogeny class $[P]$. In \S \ref{sec3} we show that there is a bijective correspondence between the set $[P]\subset X_\Gamma$ and the double coset space 
\[\Gamma\backslash(\cO\otimes\A_f)^\times/(\End(A,\imath)\otimes\Q)^\times,\]
for some compact open subgroup $\Gamma$. This provides a purely algebraic description of the $\bar\Q$-isogeny class of $(A,\imath)$.
Finally, in \S \ref{sec4}, we introduce the main result of this note (Theorem \ref{TeoGalact}): The Galois action of $\Gal(\bar\Q/L)$ on the $\bar\Q$-isogeny class $[P]\subset X_\Gamma$ is given by the following map
\[
\Gal(\bar\Q/L)\times\Gamma\backslash(\cO\otimes\A_f)^\times/(\End(A,\imath)\otimes\Q)^\times\longrightarrow\Gamma\backslash(\cO\otimes\A_f)^\times/(\End(A,\imath)\otimes\Q)^\times\]\[
\qquad\qquad\qquad(\sigma,[b])\qquad\longmapsto\qquad[b\rho(\sigma)],
\]
where $[b]$ denotes the class of $b\in(\cO\otimes\A_f)^\times$ in the double coset space $\Gamma\backslash(\cO\otimes\A_f)^\times/(\End(A,\imath)\otimes\Q)^\times$.

The current interest on abelian $L$-varieties began, with $L=\Q$, when K. Ribet observed that non-CM absolutely simple factors of the modular Jacobians $J_1(N)$ are in fact abelian $\Q$-varieties \cite{Rib}. Actually, after the proof of Serre's conjecture on representations of $\Gal(\bar\Q/\Q)$ \cite[3.2.4?]{Ser}, every so-called \emph{building block} (namely a $\Q$-variety whose endomorphism algebra is a central division algebra over a totally real number field $F$ with Schur index $t=1$ or $t=2$ and $t[F:\Q]=\dim A$), is an absolutely simple factor up to isogeny of a modular Jacobian $J_1(N)$.

In order to explore the relation of the representation $\rho$ with modularity, one realizes that the construction of $\rho$ given in \S \ref{sec2} imitates the classical construction of the $\ell$-adic Galois representation of an elliptic curve defined over $L$ or the projective $\ell$-adic Galois representation attached to an elliptic $\Q$-curve. 
In fact, in the trivial case $A=E\times E$ and $E$ one of such objects, $\rho$ is the projectivization of the product over all $\ell$ of the corresponding classical $\ell$-adic representations. Something analogous happens when $(A,\imath)$ is a building block, namely, a non-CM abelian $\Q$-surface with QM by an order in a division algebra. We know that $A$ is an absolutely simple factor of an abelian variety $A_{\GL_2}$ defined over $\Q$ of $\GL_2$-type.
In \S \ref{sec8} we show that $\rho$ is the projectivization of the product over all $\ell$ of the $\ell$-adic Galois representations attached to the abelian variety $A_{\GL_2}/\Q$.

Since in case $L=\Q$ the projective representation $\rho$ is related to well known classical $\ell$-adic representations, we expect the norm of $\rho$ to be characterized by the cyclotomic character. In \S \ref{sec6} we introduce the dual of an abelian surface with QM and we describe the Weil pairing attached to it. The given explicit description of the Weil pairing allows us to compute the norm of $\rho$ and to prove that it is indeed provided by the cyclotomic character (Theorem \ref{normrho}).

Apart from the interesting relations between $\rho$ and certain abelian varieties of $\GL_2$-type in the non-CM case, the CM case is very interesting as well. One can prove that any CM abelian surface with QM (that we know is modular) is in fact an abelian $\Q$-surface with QM (Proposition \ref{PropCM}).
Moreover, the corresponding points in $X_\Gamma$ classifying the $\bar\Q$-isogeny class of a CM abelian ($\Q$-)surface with QM are classical \emph{Heegner points}. This implies, by \emph{Shimura's reciprocity law}, that the Galois action of $\Gal(\bar\Q/\Q)$ on the set $[P]\subset X_\Gamma(\bar\Q)$ of Heegner points is described via Class Field Theory. Using this fact,
we prove in Proposition \ref{PropCM} (as a direct consequence of Theorem \ref{TeoGalact}) that, in the CM case, the projective representation $\rho$ factors through the inverse of the Artin map. This provides a complete description of $\rho$ in this case.

\subsection{Notation}

Let $\hat\Z$ denote the completion of $\Z$, hence $\hat\Z=\varprojlim(\Z/N\Z)$.
Let $\A_f$ denote the ring of finite adeles, namely $\A_f=\hat\Z\otimes\Q$.
Note that $\Q/\Z=\varinjlim \Z/N\Z$, therefore 
\begin{eqnarray*}
\End(\Q/\Z)&=&\Hom(\varinjlim (\Z/N\Z),\Q/\Z)=\varprojlim\Hom(\Z/N\Z,\Q/\Z)\\
&=&\varprojlim\Hom(\Z/N\Z,\Z/N\Z)=\varprojlim (\Z/N\Z)=\hat\Z.
\end{eqnarray*}
Write $\A_\Q$ for the ring of adeles of $\Q$.

Let $B$ be an indefinite quaternion algebra over $\Q$ of discriminant $D$, and let $\cO$ be an Eichler order in $B$.
We denote by $\Trace$ and ${\rm Norm}$ the reduced trace and the reduced norm in $B$ respectively. Namely, for any $b\in B$, we have $\Trace(b)=b+\bar b$ and ${\rm Norm}(b)=b\bar b$, where $\bar b$ stands for the conjugate of the quaternion $b$.
 
Let $G$ be the group scheme over $\Z$ such that $G(R)=(\cO^{opp}\otimes R)^\times$ for all rings $R$, where $\cO^{opp}$ is the opposite algebra to $\cO$. Note that the group $G(\A_f)$ does not depend on the Eichler order $\cO$ chosen since it is maximal locally for all but finitely many places.

Write $\hat\cO=\cO\otimes\hat\Z$, then me have the isomorphism $\hat\cO=\varprojlim(\cO/N\cO)$. Moreover, $\varinjlim(\cO/N\cO)=B/\cO$ as left $\cO$-modules. Applying the above argument, we have $\End_\cO(B/\cO)=\hat\cO^{opp}$, where $\hat\cO^{opp}$ acts on $B/\cO$ on the right. Hence we can identify $G(\A_f)=(\End_\cO(B/\cO)\otimes\Q)^\times$.

Given an abelian variety $A$ defined over $\C$, we will denote by $\End(A)$ the algebra of endomorphisms of $A$ defined over $\C$. Note that if $A$ admits a model over $\bar\Q$, then $\End(A)=\End_{\bar\Q}(A)$.
Throughout this paper, we will denote  $\End^0:=\End\otimes\Q$.


\section{Abelian $L$-surfaces with Quaternionic Multiplication}\label{sec2}

Let $F$ be a field. An \emph{abelian surface with QM by $\cO$} over $F$ is a pair $(A,\imath)$ where $A/F$ is an abelian surface and $\imath$ is an embedding $\cO\hookrightarrow \End_F(A)$, optimal in sense that $\imath(B)\cap\End_F(A)=\imath(\cO)$, and such that every endomorphism $\imath(\alpha)$, $\alpha\in\cO$, is defined over $F$. If the order $\cO$ is clear by the context, we will call them just \emph{QM-abelian surfaces}.
Let us consider the subring
\[
\End(A,\imath)=\{\lambda\in\End_F(A):\;\lambda\circ\imath(o)=\imath(o)\circ\lambda,\;\mbox{for all}\;o\in\cO\}.
\]
If $(A,\imath)$ is defined over $\C$, $\End^0(A,\imath)$ can be either $\Q$ or an imaginary quadratic field $K$, in this last situation we say that $(A,\imath)$ has \emph{complex multiplication (CM)}. 

\begin{definition}
Two abelian surfaces $(A,\imath)$ and $(A',\imath')$ with QM by $\cO$ defined over $F$ are \emph{isogenous} or \emph{$\cO$-isogenous} if there exist an isogeny $\mu:A'\rightarrow A$ defined over $F$ satisfying $\mu\circ\imath'(\alpha)=\imath(\alpha)\circ\mu$, for all $\alpha\in\cO$. We will call such an isogeny $\mu:(A',\imath')\rightarrow(A,\imath)$ a \emph{$\cO$-isogeny}. 
\end{definition}

Assume that $(A,\imath)$ is defined over $\C$ and let $\hat T(A)=\Hom(A_{tor},\Q/\Z)$ be its Tate module. Since $\hat T(A)$ is a $\A_f$-module of rank 4 with $\End(\hat T(A))=\End(A)\otimes\A_f$ and $\imath(\cO)=\imath(B)\cap\End(A)$, we conclude that $\hat T(A)\simeq \cO\otimes\A_f$ and $A_{tor}\simeq B/\cO$ as $\cO$-modules. This implies that, for any $\cO$-isogeny $\mu:(A,\imath)\rightarrow (A',\imath')$, we have an isomorphism $\ker(\mu)\simeq I_\mu/\cO$ as $\cO$-modules, for some left fractional $\cO$-ideal $I_\mu$. We define $\deg(\mu)$ to be ${\rm Norm}(I_\mu)^{-1}$. With this definition, the multiplication-by-$n$ $\cO$-isogeny has degree $\deg(n)=n^2$, instead of $n^4$. Moreover, we have an inclusion $\ker(\mu)\subseteq\ker(\deg(\mu))$ provided by 
\[
\ker(\mu)=I_\mu/\cO\subseteq{\rm Norm}(I_\mu)\cO/\cO=\ker(\deg(\mu)).
\]
This implies that there exists a $\cO$-isogeny $\mu^\vee:(A',\imath')\rightarrow(A,\imath)$ such that $\mu\circ\mu^\vee=\mu^\vee\circ\mu=\deg(\mu)$ and $\deg(\mu)=\deg(\mu^\vee)$. We call such $\cO$-isogeny the \emph{dual $\cO$-isogeny} of $\mu$. 

\begin{definition}
Let $L/\Q$ be a number field.
An \emph{abelian $L$-surface with QM by $\cO$} is an abelian surface with QM by $\cO$ $(A,\imath)$ over $\bar\Q$ such that, for all $\sigma\in\Gal(\bar\Q/L)$, there exist an isogeny $\mu_\sigma:A^\sigma\rightarrow A$ (defined over $\bar\Q$), such that $\mu_\sigma\circ\imath(o)^\sigma=\imath(o)\circ\mu_\sigma$ for all $o\in\cO$. Equivalently, $(A^\sigma, \imath^\sigma)$ and $(A,\imath)$ are $\cO$-isogenous for all $\sigma\in\Gal(\bar\Q/L)$, where $\imath^\sigma$ is defined by $\imath^\sigma(o):=\imath(o)^\sigma$ for $o\in\cO$.
\end{definition}

Given an abelian $L$-surface $(A,\imath)$ with QM we shall construct a map 
\[
\rho_{(A,\imath,\varphi)}: \Gal(\bar L/L) \longrightarrow G(\A_f)/\End^0(A,\imath)^\times
\]
that describes the Galois action on the Tate module.
In order to do this, we will fix an isomorphism $\varphi:A_{tor}\rightarrow B/\cO$. 
The following result shows that to choose such an isomorphism $\varphi$ is equivalent to choose a basis $\{\varphi_1,\varphi_2,\varphi_3,\varphi_4\}$ of the Tate module $\hat T(A)$.
\begin{lemma}\label{equiTatemod}
Given a basis $\{\varphi_i\}_{i=1\cdots 4}$ of $\hat T(A)$, there exists a $\hat\Z$-basis $\{e_i\}_{i=1\cdots 4}$ of $\hat\cO$ such that 
\[
\varphi:A_{tor}\longrightarrow B/\cO\simeq\hat B/\hat\cO;\qquad P\longmapsto \sum_{i=1\cdots 4}\varphi_i(P)e_i
\]
is a $\cO$-module isomorphism. Analogously, given a $\cO$-module isomorphism $\varphi:A_{tor}\rightarrow B/\cO$, any $\hat\Z$-basis $\{e_i\}_{i=1\cdots 4}$ of $\hat\cO$ provides a basis $\{\varphi_i\}_{i=1\cdots 4}$ of $\hat T(A)$ satisfying $\varphi(P)=\sum_{i=1}^4\varphi_i(P)e_i$.
\end{lemma}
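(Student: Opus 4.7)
The plan is to leverage the $\cO$-module isomorphism $A_{tor}\simeq\hat B/\hat\cO$ already established in the preceding discussion, and reduce the lemma to a change-of-basis computation. First I would fix, once and for all, a reference $\cO$-module isomorphism $\psi_0:A_{tor}\ra\hat B/\hat\cO$ together with any $\hat\Z$-basis $\{e_i^0\}_{i=1,\ldots,4}$ of $\hat\cO$. The latter identifies $\hat B/\hat\cO=\bigoplus_i(\Q/\Z)e_i^0$, so projecting onto each coordinate produces homomorphisms $\varphi_i^0\in\hat T(A)$ satisfying $\psi_0(P)=\sum_i\varphi_i^0(P)e_i^0$; because $\psi_0$ is a $\hat\Z$-module isomorphism, the $\varphi_i^0$ form a $\hat\Z$-basis of $\hat T(A)$.

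For the first assertion, given an arbitrary $\hat\Z$-basis $\{\varphi_i\}$ of $\hat T(A)$, I would write $\varphi_i=\sum_j M_{ij}\varphi_j^0$ with $M=(M_{ij})\in\GL_4(\hat\Z)$ and transport this change of basis to $\hat\cO$ by setting $e_i=\sum_j(M^{-1})_{ji}e_j^0$; this is automatically a $\hat\Z$-basis of $\hat\cO$. A direct substitution then gives
\[
\sum_i\varphi_i(P)e_i=\sum_j\varphi_j^0(P)\Bigl(\sum_i M_{ij}e_i\Bigr)=\sum_j\varphi_j^0(P)e_j^0=\psi_0(P),
\]
so that the map $\varphi$ defined by $P\mapsto\sum_i\varphi_i(P)e_i$ coincides with $\psi_0$ and is therefore a $\cO$-module isomorphism. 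For the converse, given a $\cO$-module isomorphism $\varphi:A_{tor}\ra B/\cO$ and any $\hat\Z$-basis $\{e_i\}$ of $\hat\cO$, the coordinate decomposition $\hat B/\hat\cO=\bigoplus_i(\Q/\Z)e_i$ produces functionals $\varphi_i\in\hat T(A)$ by $\varphi(P)=\sum_i\varphi_i(P)e_i$; since $\varphi$ is already a $\hat\Z$-module isomorphism, the $\varphi_i$ form the $\hat\Z$-basis of $\hat T(A)$ dual to $\{e_i\}$ under the induced identification $A_{tor}\simeq(\Q/\Z)^4$.

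I do not anticipate a real obstacle here: the substantive input is the already-available $\cO$-module isomorphism $A_{tor}\simeq\hat B/\hat\cO$, and everything else reduces to matching two rank-$4$ free $\hat\Z$-modules via an element of $\GL_4(\hat\Z)$. The only mildly delicate bookkeeping is tracking left versus right multiplications when writing the coordinate functionals; the $\cO$-linearity of $\varphi$ in the first direction is not verified by hand but obtained for free from the identity $\varphi=\psi_0$.
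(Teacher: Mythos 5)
Your proof is correct, but it takes a different route from the paper's. You treat the $\cO$-module isomorphism $A_{tor}\simeq B/\cO\simeq\hat B/\hat\cO$ (asserted in the paragraph preceding the lemma) as the substantive input, fix a reference isomorphism $\psi_0$ and reference basis $\{e_i^0\}$, and reduce both directions to a change of basis by a matrix $M\in\GL_4(\hat\Z)$; the constructed map then literally equals $\psi_0$, so its $\cO$-linearity is automatic. The paper instead works directly with torsion points: it builds a compatible system $\{P_n\}$ with $\varphi_i(P_n)=\tfrac1n\delta_1^i$, uses $A[n]\simeq\cO/n\cO$ to produce elements $e_j\in\hat\cO$ with $\varphi_i(\imath(e_j)P_n)=\tfrac1n\delta_j^i$ (so in particular $e_1=1$), and then checks by hand both that $\{e_j\}$ is a $\hat\Z$-basis and that the resulting $\varphi$ is $\cO$-linear, using that $P_n$ generates $A[n]$ as an $\cO$-module. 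Your argument is shorter and conceptually cleaner, at the price of invoking the abstract isomorphism as a black box (which is legitimate here, since the paper establishes it before the lemma, and in any case both arguments ultimately rest on the same fact $A[n]\simeq\cO/n\cO$); the paper's construction buys an explicit, normalized basis ($e_1=1$) expressed directly in terms of the given $\varphi_i$, making the correspondence concrete at each finite level. Your handling of the converse direction coincides with the paper's, and your side remarks (centrality of $\hat\Z$-scalars, so left/right bookkeeping is harmless; duality of coordinate projections) are accurate.
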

\begin{proof}
Since $\{\varphi_i\}_{i=1\cdots 4}$ is a basis, the map $A_{tor}\rightarrow(\Q/\Z)^4;\; P\mapsto (\varphi_i( P))_i$ is an isomorphism. Then there exists a unique sequence $\{P_n\}_{n\in\N}\subset A_{tor}$ such that $\varphi_i(P_n)=\frac{1}{n}\delta_{1}^i$, where $\delta_i^j$ is the Kronecker delta. 
Since $A$ has QM by $\cO$, we know that $A[n]\simeq\cO/n\cO$. Thus there exists $e_j\in\hat\cO=\underleftarrow{\lim}\cO/n\cO$ such that $\varphi_i(\imath(e_j)P_n)=\frac{1}{n}\delta_{j}^i$ (in particular $e_1=1$). We claim that $\{e_i\}_{i=1\cdots 4}$ form a $\hat\Z$-basis for $\hat\cO$. Indeed, 
for any $\alpha\in\hat\cO$, there exists $\hat n_i\in\hat\Z$ such that $\varphi_i(\imath(\alpha)P_n)=\frac{n_i}{n}$, where $n_i=\hat n_i \;{\rm mod}\; n$, thus
\[
\varphi_i(\imath(\alpha)P_n)-\varphi_i\left(\imath\left(\sum_{j=1\cdots 4}\hat n_je_j\right)P_n\right)=\frac{n_i}{n}-\sum_{j=1\cdots 4}\frac{n_j}{n}\delta_i^j=0.
\]
Since $\{\varphi_i\}_{i=1\cdots 4}$ form a basis and $P_n$ generates $A[n]$ as $\cO$-module, we conclude $\alpha=\sum_{j=1\cdots 4}\hat n_je_j$ and $\{e_i\}_{i-1\cdots 4}$ form a $\hat\Z$-basis for $\hat\cO$.

Finally, we consider the well defined morphism $\varphi$ and let $\alpha=\sum_{j=1\cdots 4}\hat n_je_j\in\hat\cO$, then
\[
\varphi(\imath(\alpha)P_n)=\sum_{i,j=1\cdots 4}n_j\varphi_i\left(\imath(e_j)P_n\right)e_i=\sum_{i,j=1\cdots 4}n_j\frac{1}{n}\delta_i^je_i=\frac{1}{n}\sum_{i=1\cdots 4}n_ie_i=\alpha\varphi(P_n).
\]
Since $P_n$ generates $A[n]$ as a $\cO$-module, we conclude that $\varphi$ is a $\cO$-module isomorphism.

Analogously, given a $\cO$-module isomorphism $\varphi:A_{tor}\rightarrow B/\cO$ and given a $\hat\Z$-basis $\{e_i\}_{i=1\cdots 4}$ of $\hat\cO$, we define the morphism $\varphi_i(P)=x_i$, where $\varphi(P)=\sum_{i=1\cdots 4}x_ie_i$. 
It is clear that $\{\varphi_i\}_{i=1\cdots 4}$ provides a $\hat\Z$-basis of $\Hom(A_{tor},\Q/\Z)$.
\end{proof}


Since $(A,\imath)$ is defined over $\bar\Q$, we can fix a number field $M$ and a model of $A$ over $M$ such that any endomorphism is defined over $M$. We denote such a model also by $(A,\imath)$ by abuse of notation. We fix a set of $\cO$-isogenies $\mu=\{\mu_\sigma:(A^\sigma,\imath^\sigma)\rightarrow (A,\imath),\;\sigma\in\Gal(M/L)\}$ and assume, after extending $M$ if necessary, that every $\cO$-isogeny in $\mu$ is also defined over $M$. 

Let us consider the endomorphism on $B/\cO$ given by 
\[
\varphi( P)\longmapsto\varphi(\mu_\sigma(P^\sigma)).
\]
Such endomorphism commute with the action of $\cO$, indeed, for any $\alpha\in\cO$,
\[
\varphi(\mu_\sigma((\imath(\alpha)P)^\sigma))=\varphi(\mu_\sigma(\imath(\alpha)^\sigma(P^\sigma)))=\varphi(\imath(\alpha)\mu_\sigma(P^\sigma))
=\alpha\varphi(\mu_\sigma(P^\sigma)).
\]
Hence it corresponds to an element of $\End_\cO^0(B/\cO)^\times$ since $\mu_\sigma$ has finite kernel. Once we identify $\End_\cO^0(B/\cO)^\times$ with $G(\A_f)$ acting on $B/\cO$ on the right (provided that $\cO$ acts on $B/\cO$ on the left), we deduce that there exists $\rho^\mu_{(A,\imath,\varphi)}(\sigma)\in G(\A_f)$ such that 
\[
\varphi(\mu_\sigma(P^\sigma))=\varphi(P)\rho^\mu_{(A,\imath,\varphi)}(\sigma),\;\;\mbox{for all }P\in A_{tor}.
\]
We have obtained a map 
\[
\rho^\mu_{(A,\imath,\varphi)}: \Gal(\bar \Q/L) \longrightarrow G(\A_f).
\]
This map may depend on the choice of the set of $\cO$-isogenies $\mu$, nevertheless we can consider the quotient $G(\A_f)/\End^0(A,\imath)^\times$, where $\End^0(A,\imath)^\times$ is embedded in $G(\A_f)$ by means of the natural embedding
\[
\varphi^\ast:\End^0(A,\imath)^\times\hookrightarrow G(\A_f)=\End^0_\cO(B/\cO)^\times;\qquad \varphi^\ast(\lambda)=\lambda^\ast=\varphi\circ\lambda\circ\varphi^{-1}.
\]
Hence the composition with the quotient map, gives rise to a map of the form:
\[
\rho_{(A,\imath,\varphi)}: \Gal(\bar \Q/L) \longrightarrow G(\A_f)/\End^0(A,\imath)^\times.
\]

\begin{lemma}
The map $\rho_{(A,\imath,\varphi)}$ is independent on the choice of the set of $\cO$-isogenies $\mu$ and the choice of the model in the $\bar\Q$-isomorphism class of $(A,\imath)$. 
\end{lemma}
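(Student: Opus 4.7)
The plan is to verify both independence statements by exhibiting the ambiguity in $\rho^\mu_{(A,\imath,\varphi)}(\sigma)$ as right-multiplication by an element in the image of $\varphi^\ast:\End^0(A,\imath)^\times\hookrightarrow G(\A_f)$.

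For the independence on $\mu$, I would start with two sets $\mu=\{\mu_\sigma\}$ and $\mu'=\{\mu'_\sigma\}$ of $\cO$-isogenies $(A^\sigma,\imath^\sigma)\to(A,\imath)$ and produce a canonical comparison element using the dual $\cO$-isogeny:
\[
\lambda_\sigma\;:=\;\tfrac{1}{\deg(\mu_\sigma)}\,\mu'_\sigma\circ\mu_\sigma^\vee\;\in\;\End^0(A).
\]
The first step is to check that $\lambda_\sigma\in\End^0(A,\imath)^\times$, which reduces to the identity $\mu_\sigma^\vee\circ\imath(\alpha)=\imath^\sigma(\alpha)\circ\mu_\sigma^\vee$; this follows by composing the intertwining relation $\mu_\sigma\circ\imath^\sigma(\alpha)=\imath(\alpha)\circ\mu_\sigma$ with $\mu_\sigma^\vee$ on both sides and using $\mu_\sigma\circ\mu_\sigma^\vee=\deg(\mu_\sigma)$. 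The second step is to apply $\varphi$ to $\mu'_\sigma(P^\sigma)=\lambda_\sigma(\mu_\sigma(P^\sigma))$; since $\varphi^\ast$ realizes $\End^0(A,\imath)^\times$ as right-multiplications on $B/\cO$, this yields
\[
\rho^{\mu'}_{(A,\imath,\varphi)}(\sigma)\;=\;\rho^\mu_{(A,\imath,\varphi)}(\sigma)\cdot\lambda_\sigma^\ast,
\]
which is the desired equality modulo $\End^0(A,\imath)^\times$.

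For the independence on the model, I would fix a $\cO$-isomorphism $\nu:(A,\imath)\to(A',\imath')$ over $\bar\Q$ and let $\varphi':=\varphi\circ\nu^{-1}:A'_{tor}\to B/\cO$ be the transported level structure. From a given set $\mu=\{\mu_\sigma\}$ on $A$, the natural set on $A'$ is
\[
\mu'_\sigma\;:=\;\nu\circ\mu_\sigma\circ(\nu^\sigma)^{-1}\;:\;(A')^\sigma\longrightarrow A'.
\]
A direct unwinding using $\varphi'=\varphi\circ\nu^{-1}$ and $(\nu^\sigma)^{-1}=(\nu^{-1})^\sigma$ gives
\[
\varphi'\!\bigl(\mu'_\sigma((P')^\sigma)\bigr)=\varphi\!\bigl(\mu_\sigma((\nu^{-1}(P'))^\sigma)\bigr)=\varphi(\nu^{-1}(P'))\cdot\rho^\mu_{(A,\imath,\varphi)}(\sigma)=\varphi'(P')\cdot\rho^\mu_{(A,\imath,\varphi)}(\sigma),
\]
so these specific choices yield the \emph{same} element of $G(\A_f)$; combining with the first part, any other choice of a set of $\cO$-isogenies on $A'$ then produces the same class in $G(\A_f)/\End^0(A,\imath)^\times$.

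The main obstacle is not mathematical depth but keeping the formalism straight: correctly tracking the right action of $\End_\cO^0(B/\cO)^\times=G(\A_f)$ on $B/\cO$ versus the left action of $\cO$, ensuring all of $\nu$, $\mu_\sigma$ and $\mu'_\sigma$ are defined over a common number field (by enlarging $M$ if necessary), and verifying that $\lambda_\sigma$ lies in $\End^0(A,\imath)^\times$ rather than merely in $\End^0(A)$. Once these conventions are pinned down, both computations become essentially formal.
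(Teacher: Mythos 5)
Your argument is correct and is essentially the paper's own proof: the same comparison element $\lambda_\sigma=\frac{1}{\deg(\mu_\sigma)}\mu'_\sigma\circ\mu_\sigma^\vee\in\End^0(A,\imath)^\times$ gives $\rho^{\mu'}=\rho^{\mu}\cdot\lambda_\sigma^\ast$, and the model independence is handled identically by transporting $\varphi$ and the isogenies along the $\cO$-isomorphism (the paper uses $\eta:A'\to A$ with $\varphi'=\varphi\circ\eta$, you use its inverse) and then invoking the first part. The extra care you take in checking $\mu_\sigma^\vee$ intertwines $\imath$ and $\imath^\sigma$ and that everything is defined over a common number field is fine, though the paper already builds these facts into its definition of the dual $\cO$-isogeny and choice of $M$.
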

\begin{proof}
Let $\mu'=\{\mu_\sigma':(A^\sigma,\imath^\sigma)\rightarrow(A,\imath),\;\sigma\in\Gal(M/L)\}$ be another set of $\cO$-isogenies (it can be defined over another Galois extension $M'$ but we can extend both sets trivially). Then $\lambda_\sigma:=\frac{1}{\deg(\mu_\sigma)}\mu_\sigma'\circ\mu_\sigma^\vee\in\End^0(A,\imath)^\times$. Hence we have,
\begin{eqnarray*}
\varphi(P)\rho^{\mu'}_{(A,\imath,\varphi)}(\sigma)&=&\varphi(\mu'_\sigma(P^\sigma))=\varphi(\lambda_\sigma(\mu_\sigma(P^\sigma)))=\varphi(\mu_\sigma(P^\sigma))\lambda_\sigma^\ast\\
&=&\varphi(P)\rho^\mu_{(A,\imath,\varphi)}(\sigma)\lambda_\sigma^\ast.
\end{eqnarray*}
Thus $\rho^{\mu'}_{(A,\imath,\varphi)}(\sigma)=\rho^{\mu}_{(A,\imath,\varphi)}(\sigma)\lambda_\sigma^\ast$ and $$\rho^{\mu'}_{(A,\imath,\varphi)}(\sigma)\End^0(A,\imath)^\times=\rho^\mu_{(A,\imath,\varphi)}(\sigma)\End^0(A,\imath)^\times,$$
which proves our first assertion.

Assume that we have another model $(A',\imath')$ over $M'$. Thus we have an isomorphism $\eta:A'\rightarrow A$, defined over a bigger extension $N\supseteq M'M$, such that $\imath(\alpha)\circ\eta=\eta\circ\imath'(\alpha)$, for all $\alpha\in\cO$. Since the isomorphism $\varphi$ is chosen in the $\bar\Q$-isomorphism class of $(A,\imath)$, its realization $\varphi'$ on $(A',\imath')$ satisfies $\varphi'=\varphi\circ\eta$. We compute
\[
\varphi(\mu_\sigma(P^\sigma))=\varphi'(\eta^{-1}\circ\mu_\sigma\circ\eta^{\sigma}((\eta^{-1}( P))^\sigma)).
\]
Since we have proved that $\rho_{(A',\imath',\varphi')}$ does not depend on the choice of the $\cO$-isogenies, we can choose $\eta^{-1}\circ\mu_\sigma\circ\eta^{\sigma}$ obtaining the desired result $\rho_{(A',\imath',\varphi')}=\rho_{(A,\imath,\varphi)}$.
\end{proof}

Note that $G(\A_f)/\End^0(A,\imath)^\times$ is a group in the non-CM case. Nevertheless, in the CM case, $\End(A,\imath)^0=K$ an imaginary quadratic field, hence $K^\times$ it is not normal in $G(\A_f)$. 

Given the embedding $\End^0(A,\imath)^\times\hookrightarrow G(\A_f)$ described above, 
let us denote by $N_A$ the normalizer of $\End(A,\imath)$ in $G(\A_f)$. Note that $N_A=G(\A_f)$, in the non-CM case. Moreover, if $(A,\imath)$ has CM by the imaginary quadratic field $K$, then the $\ell$ component of $N_A$ is $(N_A)_\ell=K_\ell^\times\cup jK_\ell^\times$, with $j^2\in\Q^\times$ and $jk=\bar kj$ for all $k\in K_\ell^\times$.
In any case $N_A/\End^0(A,\imath)^\times$ is now a group.
\begin{lemma}\label{comm}
The map $\rho_{(A,\imath,\varphi)}$ factors through 
\[
\rho_{(A,\imath,\varphi)}:\Gal(\bar \Q/L)\stackrel{\rho_{(A,\imath,\varphi)}^N}{\longrightarrow} N_A/\End^0(A,\imath)^\times\hookrightarrow G(\A_f)/\End^0(A,\imath)^\times
\]
Moreover, the map $\rho_{(A,\imath,\varphi)}^N$ is a group homomorphism. 
\end{lemma}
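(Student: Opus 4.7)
The plan is to derive both assertions from the defining relation $\varphi(\mu_\sigma(P^\sigma))=\varphi(P)\cdot\rho^\mu_{(A,\imath,\varphi)}(\sigma)$ together with the freedom in the choice of the family $\mu=\{\mu_\sigma\}_\sigma$ guaranteed by the preceding lemma.

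\emph{Factorization through $N_A$.} I fix $\sigma$ and any $\lambda\in\End^0(A,\imath)^\times$, and consider the candidate element $\tilde\lambda_\sigma:=\mu_\sigma\circ\lambda^\sigma\circ\mu_\sigma^{-1}\in\End^0(A)$ (where $\mu_\sigma^{-1}$ is taken in $\End^0$). The first step is to check $\tilde\lambda_\sigma\in\End^0(A,\imath)^\times$: since $\lambda$ commutes with every $\imath(o)$, the base change $\lambda^\sigma$ commutes with every $\imath^\sigma(o)$, and the $\cO$-isogeny relation $\mu_\sigma\circ\imath^\sigma(o)=\imath(o)\circ\mu_\sigma$ then lets one slide $\mu_\sigma$ and $\mu_\sigma^{-1}$ past $\imath(o)$, giving $\tilde\lambda_\sigma\circ\imath(o)=\imath(o)\circ\tilde\lambda_\sigma$. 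Next, I apply the defining relation at the point $\lambda(P)$ and use $\mu_\sigma\circ\lambda^\sigma=\tilde\lambda_\sigma\circ\mu_\sigma$ to compute
\[
\varphi(P)\cdot\lambda^\ast\cdot\rho^\mu(\sigma)=\varphi(\lambda(P))\cdot\rho^\mu(\sigma)=\varphi(\mu_\sigma(\lambda^\sigma(P^\sigma)))=\varphi(\tilde\lambda_\sigma(\mu_\sigma(P^\sigma)))=\varphi(P)\cdot\rho^\mu(\sigma)\cdot\tilde\lambda_\sigma^\ast.
\]
Since $P$ is arbitrary and $\varphi$ is an isomorphism, I obtain $\lambda^\ast\rho^\mu(\sigma)=\rho^\mu(\sigma)\tilde\lambda_\sigma^\ast$, so conjugation by $\rho^\mu(\sigma)$ carries $\End^0(A,\imath)^\times$ into itself inside $G(\A_f)$ and hence $\rho^\mu(\sigma)\in N_A$.

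\emph{Homomorphism property.} Given $\sigma,\tau$, I exploit independence on $\mu$ to replace $\mu_{\sigma\tau}$ by the composite $\cO$-isogeny $\mu'_{\sigma\tau}:=\mu_\tau\circ\mu_\sigma^\tau\colon(A^\sigma)^\tau\to A^\tau\to A$. Writing $Q:=\mu_\sigma(P^\sigma)$ and applying the defining relation twice, first with $\tau$ to $Q$ and then with $\sigma$ to $P$, I get
\[
\varphi(\mu'_{\sigma\tau}(P^{\sigma\tau}))=\varphi(\mu_\tau(Q^\tau))=\varphi(Q)\cdot\rho^\mu(\tau)=\varphi(P)\cdot\rho^\mu(\sigma)\cdot\rho^\mu(\tau),
\]
so $\rho^{\mu'}(\sigma\tau)=\rho^\mu(\sigma)\rho^\mu(\tau)$. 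Invoking the previous lemma, $\rho^{\mu'}(\sigma\tau)$ and $\rho^\mu(\sigma\tau)$ differ by an element of $\End^0(A,\imath)^\times$, so the identity descends to $\rho^N(\sigma\tau)=\rho^N(\sigma)\rho^N(\tau)$ in $N_A/\End^0(A,\imath)^\times$.

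\emph{Main obstacle.} The only genuine content is the first step—showing that the adelic element $\rho^\mu(\sigma)$ normalizes the commutant $\End^0(A,\imath)^\times$—which is precisely where the $\cO$-isogeny property of $\mu_\sigma$ (not merely its being an isogeny) is used. The remaining difficulty is clerical: one must keep mutually compatible the right action of $G(\A_f)=\End^0_\cO(B/\cO)^\times$ on $B/\cO$, the Galois convention $P^{\sigma\tau}=(P^\sigma)^\tau$, and the embedding $\lambda\mapsto\lambda^\ast=\varphi\circ\lambda\circ\varphi^{-1}$, since it is exactly this coherent choice of conventions that makes the product come out as a genuine homomorphism rather than an antihomomorphism.
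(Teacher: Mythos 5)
Your proof is correct and follows essentially the paper's own route: your normalizer step is the paper's computation read in the opposite direction of conjugation (your $\tilde\lambda_\sigma=\mu_\sigma\lambda^\sigma\mu_\sigma^{-1}$ versus the paper's $\frac{1}{\deg\mu_\sigma}(\mu_\sigma^\vee\lambda\mu_\sigma)^{\sigma^{-1}}$, with the same sliding of $\imath(o)$ across the $\cO$-isogeny), and your homomorphism step, which compares $\mu_{\sigma\tau}$ with the composite $\mu_\tau\circ\mu_\sigma^\tau$ via the independence-of-$\mu$ lemma, is exactly the paper's cocycle $c_{(A,\imath)}(\sigma,\tau)\in\End^0(A,\imath)^\times$ in disguise. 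The only cosmetic slip is that your displayed identity $\lambda^\ast\rho^\mu(\sigma)=\rho^\mu(\sigma)\tilde\lambda_\sigma^\ast$ literally says conjugation by $\rho^\mu(\sigma)^{-1}$ lands in $\End^0(A,\imath)^\times$, but since $\lambda\mapsto\tilde\lambda_\sigma$ is invertible (its inverse being the paper's formula) this gives membership in $N_A$ all the same.
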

\begin{proof}
On the one side, for all $\sigma\in\Gal(\bar \Q/L)$ and $\lambda\in\End(A,\imath)$ we have $$\rho^\mu_{(A,\imath,\varphi)}(\sigma)\lambda^\ast\rho^\mu_{(A,\imath,\varphi)}(\sigma)^{-1}\in\End(A,\imath)^0.$$ 
Indeed,
\begin{eqnarray*}
(\deg\mu_\sigma)\varphi(P)\rho^\mu_{(A,\imath,\varphi)}(\sigma)\lambda^\ast\rho^\mu_{(A,\imath,\varphi)}(\sigma)^{-1}&=&(\deg\mu_\sigma)\varphi(\lambda(\mu_\sigma(P^\sigma)))\rho^\mu_{(A,\imath,\varphi)}(\sigma)^{-1}\\
&=&\varphi(\mu_\sigma^\vee(\lambda(\mu_\sigma(P^\sigma)))^{\sigma^{-1}})\\
&=&\varphi((\mu_\sigma^\vee\lambda\mu_\sigma)^{\sigma^{-1}}(P))\\
&=&\varphi(P)\left((\mu_\sigma^\vee\lambda\mu_\sigma)^{\sigma^{-1}}\right)^\ast,
\end{eqnarray*}
where clearly $(\mu_\sigma^\vee\lambda\mu_\sigma)^{\sigma^{-1}}\in\End^0(A,\imath)$. Therefore $\rho^\mu_{(A,\imath,\varphi)}(\sigma)\in N_A$.

On the other side, one checks that 
\[
\rho^\mu_{(A,\imath,\varphi)}(\sigma\tau)^{-1}\rho^\mu_{(A,\imath,\varphi)}(\sigma)\rho^\mu_{(A,\imath,\varphi)}(\tau)
\]
acts on $\hat TA\otimes\Q:=(\prod'_pT_pA)\otimes\Q$ in the same way as does
\[
c_{(A,\imath)}(\sigma,\tau) = (1/\deg(\mu_{\sigma\tau}))\mu_\sigma\mu_\tau^\sigma\mu_{\sigma\tau}^\vee\in(\End(A, \imath) \otimes_\Z \Q)^\times= \End^0(A,\imath)^\times.
\]
In particular, the quotient $\rho_{(A,\imath,\varphi)}(\sigma)$ is a group homomorphism.
\end{proof}

\begin{remark}
Assume that the discriminant $D=1$, thus the quaternion algebra $B=\M_2(\Q)$. An abelian surface with QM by $\cO=\M_2(\Z)$ is the product $A=E\times E$, where $E$ is an elliptic curve. In the particular case that $E$ is defined over $L$ (thus clearly $A=E\times E$ is an abelian $L$-surface with QM), the representation $\rho_{(A,\imath,\varphi)}$ is just the quotient modulo $\End^0(A,\imath)^\times=\End^0(E)^\times$ of the classical action on the Tate module
\[
\rho_E:\Gal(\bar \Q/L)\longrightarrow\GL_2(\hat\Z)=\prod_\ell\GL_2(\Z_\ell)\hookrightarrow\GL_2(\A_f).
\]
\end{remark}

\section{Shimura curves and isogeny classes}\label{sec3}

Assume that $\cO_0$ is a maximal order in $B$, let $\Gamma$ be an open subgroup of $\hat\cO_0^\times=G(\hat \Z)$. We say that two $\cO_0$-module isomorphisms $\varphi,\varphi':A_{tor}\stackrel{\simeq}{\rightarrow}B/\cO_0$ are \emph{$\Gamma$-equivalent} if there exists an element $\gamma\in \Gamma$ such that $\varphi'=\varphi\gamma$.
The \emph{Shimura curve} $X_\Gamma$, is the compactification of the coarse moduli space of triples $(A,\imath,\bar\varphi)$, where $(A,\imath)$ are abelian surfaces with QM by $\cO_0$ and $\bar\varphi$ is a $\Gamma$-equivalence class of $\cO_0$-module isomorphisms $\varphi:A_{tor}\stackrel{\simeq}{\rightarrow}B/\cO_0$. Such coarse moduli space is already compact unless $D=1$. The curve $X_\Gamma$ is defined over some number field $L_\Gamma$. If $k$ is a field of characteristic zero, 
given a point $P\in X_\Gamma(\bar k)$ corresponding to the isomorphism class of a triple $(A,\imath,\bar\varphi)/\bar k$, its Galois conjugate $P^\sigma\in X_\Gamma(\bar k)$, for any $\sigma\in\Gal(\bar k/k)$, corresponds to the isomorphism class of $(A^\sigma,\imath^\sigma,\bar\varphi^\sigma)$, where 
\[
\varphi^\sigma:A_{tor}^\sigma\stackrel{\simeq}{\longrightarrow}B/\cO_0;\qquad \varphi^\sigma(Q^\sigma)=\varphi(Q). 
\] 
Thus, a $k$-rational point $P$ in $X_\Gamma$ corresponds to the isomorphism class of a triple $(A,\imath,\bar\varphi)/\bar k$ which is isomorphic to all its $\Gal(\bar k/k)$-conjugates.

The complex points of the Shimura curve are in correspondence with the double coset space
\[
X_\Gamma(\C)=(\Gamma_\infty \Gamma\backslash G(\A))\slash G(\Q)\cup\{\mbox{cusps}\},\quad \Gamma_\infty=\left\{\left(\begin{array}{cc}a&b\\-b&a\end{array}\right)\in\GL_2(\R)\right\},
\] 
where cusps only appear in case $D=1$.
The triple $(A_g,\imath_g,\bar\varphi_g)$ over $\C$ corresponding to $g=(g_\infty, g_f)\in G(\A)$ is $A_{g}:=(B\otimes\R)_{g_\infty}/I_{g_f}$, where $I_{g_f}=\hat\cO_0 g_f\cap B$ and $(B\otimes\R)_{g_\infty}=\M_2(\R)$ with complex structure $h_{g_\infty}$
\[
h_{g_\infty}:\C\rightarrow\M_2(\R);\;i\mapsto g_\infty^{-1}\left(\begin{array}{cc}&1\\-1&\end{array}\right)g_\infty;
\]
the embedding $\imath_g:\cO_0\rightarrow\End(A_g)$, is given by $\imath_g(\alpha)(b\otimes z)=\alpha b\otimes z$; and $\bar\varphi_g$ is the $\Gamma$-equivalence class of $\varphi_g:(A_g)_{tor}=B/I_{g_f}\rightarrow B/\cO_0$, $\varphi_g(b)=b g_f^{-1}$.
We compute that
\begin{eqnarray*}
\End^0(A_{g},\imath_{g})^\times&=&\{\gamma\in \Aut_B(B\otimes\R):\;\gamma I_{g_f}\otimes\Q=I_{g_f}\otimes\Q\;\mbox{and}\;\gamma h_{g_\infty}=h_{g_\infty}\gamma\}\\
&=&\{\gamma\in G(\R):\;\gamma B=B\;\mbox{and}\;\gamma h_{g_\infty}\gamma^{-1}=h_{g_\infty}\}\\
&=&\{\gamma\in G(\Q):\;\gamma h_{g_\infty}\gamma^{-1}=h_{g_\infty}\}\\
&=&\{\gamma\in G(\Q):\;g_\infty\gamma g_\infty^{-1}\in\Gamma_\infty\}.
\end{eqnarray*}

\begin{remark}
In most of the literature, objects classified by the Shimura curve $X_\Gamma$ are triples $(A,\imath,\bar\psi)$, where $(A,\imath)$ is an abelian surface with QM by $\cO_0$ as above and $\bar\psi$ is a $\Gamma$-equivalence class of $\cO_0$-module isomorphisms $\psi:\hat T(A)=\Hom(A_{tor},\Q/\Z)\stackrel{\simeq}{\rightarrow}\hat\cO_0$. It is clear that this interpretation is equivalent to ours, since for any $\varphi:A_{tor}\stackrel{\simeq}{\rightarrow} B/\cO_0$ we have the corresponding isomorphism 
\[
\psi:\hat T(A)=\Hom(A_{tor},\Q/\Z)\stackrel{\simeq}{\longrightarrow}\Hom(B/\cO_0,\Q/\Z)\simeq\hat\cO_0.
\]
\end{remark}
\begin{remark}
In the particular case that $\Gamma=\Gamma_N=\ker(G(\hat \Z)\rightarrow G(\Z/N\Z))$, to give a $\Gamma$-equivalence class of isomorphisms $\varphi:A_{tor}\rightarrow B/\cO_0$ is equivalent to give an isomorphism $\varphi_N:A[N]\rightarrow\cO_0/N\cO_0$, namely, a level-$N$-structure. This is the classical Shimura curve situation.
\end{remark}

Let $\bar k$ be a field of characteristic 0 algebraically closed. We say that two triples $(A,\imath,\bar\varphi)$ and $(A',\imath',\bar\varphi')$ over $\bar k$ are isogenous if $(A,\imath)$ and $(A',\imath')$ are isogenous.

Let $P\in X_\Gamma(\C)$ be a point corresponding to $(A,\imath,\bar\varphi)$.
Let us denote by $[P]$ the $\C$-isogeny class of $(A,\imath,\bar\varphi)/\C$ in $X_\Gamma$, namely, the set of points $Q\in  X_\Gamma(\C)$ parametrizing triples $(A',\imath',\bar\varphi')/\C$ where $(A',\imath')$ is isogenous to $(A,\imath)$.
\begin{proposition}\label{propisoclas}
Let $P=[g]=[g_\infty,1]\in (\Gamma_\infty \Gamma\backslash G(\A))\slash G(\Q)\subseteq X_\Gamma(\C)$. Then we have the following bijection
\[
\psi_{g_\infty}:\Gamma\backslash G(\A_f)/\End^0(A_{g},\imath_{g})^\times\stackrel{\simeq}{\longrightarrow}[P];\qquad g_f\longmapsto [g_\infty,g_f].
\]
\end{proposition}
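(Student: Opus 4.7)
The plan is to prove the bijection in three standard steps — well-definedness (including factoring through the double coset and landing in $[P]$), injectivity, and surjectivity — throughout exploiting the complex-analytic description of $X_\Gamma(\C)$ and the QM-isogeny dictionary for complex tori recalled above.

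For well-definedness, varying $g_f$ while keeping $g_\infty$ fixed preserves the complex structure $h_{g_\infty}$ and only replaces the left $\cO_0$-lattice $I_{g_f}=\hat\cO_0 g_f\cap B$ by a commensurable one, so the identity on $(B\otimes\R)_{g_\infty}$ induces a $\cO_0$-isogeny between $A_{g_\infty,g_f}$ and $A_g$; hence $\psi_{g_\infty}$ lands in $[P]$. Invariance under left-multiplication by $\Gamma$ reflects the $\Gamma$-equivalence built into the moduli problem. For right-multiplication by $\lambda\in\End^0(A_g,\imath_g)^\times$, the identification $\End^0(A_g,\imath_g)^\times=\{\gamma\in G(\Q):g_\infty\gamma g_\infty^{-1}\in\Gamma_\infty\}$ established just before the proposition shows $\lambda\in G(\Q)$; its diagonal right action on $G(\A)$ shifts the archimedean component by $g_\infty\lambda$, which is absorbed by left-multiplication by $g_\infty\lambda^{-1}g_\infty^{-1}\in\Gamma_\infty$.

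Injectivity is then immediate by unpacking $[g_\infty,g_f]=[g_\infty,g_f']$ in $(\Gamma_\infty\Gamma\backslash G(\A))/G(\Q)$: it yields $\lambda_\infty\in\Gamma_\infty$, $\gamma_{\rm fin}\in\Gamma$, and $\gamma\in G(\Q)$ with $\lambda_\infty g_\infty\gamma=g_\infty$ and $\gamma_{\rm fin}g_f\gamma=g_f'$. The first equation forces $g_\infty\gamma^{-1}g_\infty^{-1}\in\Gamma_\infty$, hence $\gamma\in\End^0(A_g,\imath_g)^\times$ by the same characterization, so $g_f'$ and $g_f$ are equal modulo $\Gamma$ on the left and $\End^0(A_g,\imath_g)^\times$ on the right.

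The main content is surjectivity. Given $Q\in[P]$ classifying a triple $(A',\imath',\bar\varphi')$ equipped with a $\cO_0$-isogeny $\mu:(A',\imath')\to(A_g,\imath_g)$, I would transport the complex uniformization along $\mu\otimes\R$ to identify the universal cover of $A'$ with $(B\otimes\R)_{g_\infty}$ as a left $B\otimes\R$-module with complex structure $h_{g_\infty}$. The uniformization lattice of $A'$ then pulls back to a left $\cO_0$-ideal $\Lambda'\subset B$, and via the standard adelic dictionary between left $\cO_0$-lattices in $B$ and left $\hat\cO_0^\times$-orbits in $G(\A_f)$ one writes $\Lambda'=I_{g_f'}$ for some $g_f'\in G(\A_f)$; this realizes $(A',\imath')\cong(A_{g_\infty,g_f'},\imath_{g_\infty,g_f'})$. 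The remaining step is to arrange $\bar\varphi_{g_f'}=\bar\varphi'$ by exploiting the residual left-freedom $u\mapsto ug_f'$ with $u\in\hat\cO_0^\times$, which preserves $\Lambda'$ but translates $\varphi_{g_f'}$ by the right action of $u^{-1}\in\Aut_{\cO_0}(B/\cO_0)$. Since any two $\cO_0$-module isomorphisms $A'_{\rm tor}\to B/\cO_0$ differ by such an automorphism, and the $\Gamma$-equivalence is exactly the one we quotient out on the left, a suitable $u$ exists. The principal obstacle will be this final bookkeeping: verifying cleanly that the left $\hat\cO_0^\times$-freedom in the adelic representative surjects onto $\Aut_{\cO_0}(B/\cO_0)^\times$ in the way needed to match both the lattice \emph{and} the level structure through a single $g_f'$.
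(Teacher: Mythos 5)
Your proposal is correct and follows essentially the same route as the paper: explicit isogenies (multiplication by an integer $n$ with $nI_{g_f}\subseteq\cO_0$, rather than literally "the identity") to show the image lies in $[P]$, the computation $\End^0(A_g,\imath_g)^\times=\{\gamma\in G(\Q):g_\infty\gamma g_\infty^{-1}\in\Gamma_\infty\}$ to handle the double-coset quotient and injectivity, and the lattice--adele dictionary for surjectivity. The "principal obstacle" you flag is in fact no obstacle: since $\End_{\cO_0}(B/\cO_0)=\hat\cO_0^{opp}$, every $\cO_0$-automorphism of $B/\cO_0$ is right multiplication by an element of $\hat\cO_0^\times$, and replacing $g_f'$ by $ug_f'$ with $u\in\hat\cO_0^\times$ leaves $I_{g_f'}$ unchanged while composing $\varphi_{g_f'}=\cdot\,(g_f')^{-1}$ with right multiplication by $u^{-1}$, so the left $\hat\cO_0^\times$-freedom realizes the full ambiguity in the level structure (one can even match a chosen representative $\varphi'$ exactly, with the $\Gamma$-quotient only absorbing the choice of representative of $\bar\varphi'$).
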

\begin{proof}
The non-CM case is described in \cite[Lemma 1]{M-G}, we give here a proof that works in any case. Recall that $(A_{g_\infty},\imath_{g_\infty},\bar\varphi_{g_\infty})$ is the triple corresponding to $P=[g_\infty,1]$. For any $g_f\in G(\A_f)$, there exists $n\in\Z$ such that $I_{g_f}n\subseteq\cO_0$. Therefore we have the isogeny
\[
A_{g_\infty g_f}=(B\otimes\R)_{g_\infty}/I_{g_f}\longrightarrow(B\otimes\R)_{g_\infty}/\cO_0= A_{g_\infty},\quad b\longmapsto nb,
\]
which is clearly a $\cO_0$-isogeny with respect to $\imath_{g_\infty}$ and $\imath_{g_\infty g_f}$ since the inclusion $I_{g_f}n\subseteq\cO_0$ is a monomorphism of $\cO_0$-modules. This implies $[g_\infty, g_f]\in [P]$, for all $g_f\in G(\A_f)$.

Conversely, any $\cO_0$-isogeny $(A_{g_\infty'g_f},\imath_{g_\infty'g_f})\rightarrow(A_{g_\infty},\imath_{g_\infty})$ induces an equality of complex structures $(B\otimes\R)_{g_\infty'}=(B\otimes\R)_{g_\infty}$. This implies that $g_\infty'=\Gamma_\infty g_\infty$. Therefore the corresponding point $[g_\infty',g_f]$ has a representant of the form $[g_\infty,g_f']$ in the double coset space $(\Gamma_\infty\Gamma\backslash G(\A))\slash G(\Q)$.

We conclude that the map
\[
\Gamma\backslash G(\A_f)\longrightarrow [P];\qquad g_f\longmapsto [g_\infty,g_f],
\]
is surjective. Finally, the result follows from the fact that $[g_\infty,g_f]=[g_\infty,g_f']$ in $(\Gamma_\infty\Gamma\backslash G(\A))\slash G(\Q)$ if and only if there exists $\beta\in G(\Q)$ such that $g_f=g_f'\beta$ and $g_\infty\beta\in\Gamma_\infty g_\infty$, hence $\beta\in\End^0(A_{g_\infty},\imath_{g_\infty})^\times$.
\end{proof}
\begin{remark}
The above proposition asserts that the isogeny class $[P]$ corresponds to the fiber containing $P$ of the natural map
\[
X_\Gamma\supseteq(\Gamma_\infty\Gamma\backslash G(\A))\slash G(\Q)\longrightarrow \Gamma_\infty\backslash G(\R)\slash G(\Q),\qquad [g_\infty,g_f]\longmapsto [g_\infty].
\]
\end{remark}

\section{Galois action on isogeny classes}\label{sec4}

Assume now that $(A,\imath)$ is an abelian $L$-surface with QM by $\cO_0$, let $(A,\imath,\bar\varphi)$ be a triple corresponding to the point $P\in X_\Gamma(\bar\Q)$. First we show that any $(A',\imath')$ isogenous to $(A,\imath)$ is an abelian $L$-surface with QM.
\begin{lemma}\label{isoclasses}
Let $(A,\imath)$ be an abelian $L$-surface with QM and assume that $(A',\imath')$ over $\bar\Q$ is isogenous to $(A,\imath)$. Then $(A',\imath')$ is an abelian $L$-surface with QM. 
\end{lemma}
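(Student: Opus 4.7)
The plan is to take the $\cO$-isogeny $\nu: A' \to A$ witnessing that $(A',\imath')$ is isogenous to $(A,\imath)$ and conjugate it to transport the isogenies $\mu_\sigma: A^\sigma \to A$ (which come from the hypothesis that $(A,\imath)$ is an abelian $L$-surface with QM) to $\cO$-isogenies $A'^\sigma \to A'$.

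More concretely, fix $\sigma \in \Gal(\bar\Q/L)$. The Galois conjugate $\nu^\sigma: A'^\sigma \to A^\sigma$ of the $\cO$-isogeny $\nu$ is an $\cO$-isogeny with respect to the twisted structures $\imath'^\sigma$ and $\imath^\sigma$, i.e.\ $\nu^\sigma \circ \imath'^\sigma(\alpha) = \imath^\sigma(\alpha) \circ \nu^\sigma$ for every $\alpha \in \cO$, because this relation is obtained by applying $\sigma$ to $\nu \circ \imath'(\alpha) = \imath(\alpha) \circ \nu$. Then I would form the composition
\[
\mu_\sigma' \;:=\; \nu^\vee \circ \mu_\sigma \circ \nu^\sigma : A'^\sigma \longrightarrow A',
\]
where $\nu^\vee: A \to A'$ is the dual $\cO$-isogeny constructed in \S\ref{sec2}. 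Each of the three maps is an isogeny defined over $\bar\Q$, so $\mu_\sigma'$ is an isogeny defined over $\bar\Q$, and it is the candidate witness for $(A',\imath')$ being an abelian $L$-surface with QM.

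The verification of the $\cO$-compatibility $\mu_\sigma' \circ \imath'^\sigma(\alpha) = \imath'(\alpha) \circ \mu_\sigma'$ is then a one-line chain of substitutions: commute $\imath'^\sigma(\alpha)$ past $\nu^\sigma$ to get $\imath^\sigma(\alpha)$, then past $\mu_\sigma$ to get $\imath(\alpha)$, then past $\nu^\vee$ to get $\imath'(\alpha)$. So
\[
\mu_\sigma' \circ \imath'^\sigma(\alpha) \;=\; \nu^\vee \circ \mu_\sigma \circ \imath^\sigma(\alpha) \circ \nu^\sigma \;=\; \nu^\vee \circ \imath(\alpha) \circ \mu_\sigma \circ \nu^\sigma \;=\; \imath'(\alpha) \circ \mu_\sigma'.
\]

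There is really no obstacle here beyond bookkeeping: the existence of the dual $\cO$-isogeny $\nu^\vee$ with $\nu^\vee \circ \nu = \deg(\nu)$ was established in \S\ref{sec2}, and the behaviour of Galois conjugation on isogenies is functorial, so conjugating the defining identity for $\nu$ by $\sigma$ gives the required compatibility of $\nu^\sigma$ with $\imath'^\sigma$ and $\imath^\sigma$ for free. The mildly delicate point, if any, is only notational: making sure $\imath^\sigma$ is interpreted as the twisted embedding $o \mapsto \imath(o)^\sigma$ (as defined in the excerpt) so that $\nu^\sigma$ is indeed $\cO$-linear in the required sense, rather than accidentally composing with $\sigma$ on the coefficient side.
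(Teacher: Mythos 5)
Your proposal is correct and is essentially the paper's own argument: the paper fixes an $\cO$-isogeny $\phi:(A',\imath')\to(A,\imath)$ and uses exactly the composition $\phi^\vee\circ\mu_\sigma\circ\phi^\sigma:(A')^\sigma\to A'$, which is your $\nu^\vee\circ\mu_\sigma\circ\nu^\sigma$. Your explicit verification of the $\cO$-compatibility is just the bookkeeping the paper leaves implicit.
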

\begin{proof}
Let $\sigma\in\Gal(\bar L/L)$. Since $(A,\imath)$ is an abelian $L$-surface with QM, there exists a $\cO_0$-isogeny $(A^\sigma,\imath^\sigma)\stackrel{\mu_\sigma}{\rightarrow}(A,\imath)$. Fix a $\cO_0$-isogeny $(A',\imath')\stackrel{\phi}{\rightarrow}(A,\imath)$  defined over $\bar\Q$ (such a $\cO_0$-isogeny exists since $(A,\imath)$ and $(A',\imath')$ are isogenous and both defined over $\bar\Q$). Thus by conjugating $\phi$ by $\sigma$ and composing with $\phi^\vee\circ\mu_\sigma$, one obtains
\[
((A')^\sigma,(\imath')^\sigma)\stackrel{\phi^\sigma}{\longrightarrow}(A^\sigma,\imath^\sigma)\stackrel{\mu_\sigma}{\longrightarrow}(A,\imath)\stackrel{\phi^\vee}{\longrightarrow}(A',\imath').
\]
Hence $(A',\imath')/\bar\Q$ is an abelian $L$-surface with QM.
\end{proof}
Note that, since $P$ and so $(A,\imath)$ are defined over $\bar\Q$, the $\C$-isogeny class coincide with the $\bar\Q$-isogeny class $[P]$.
Moreover, the above lemma implies that $\Gal(\bar \Q/L)$ acts on $[P]$. Indeed, if $Q\in[P]$ corresponds to $(A',\imath',\bar\varphi')$ and $\sigma\in\Gal(\bar \Q/L)$, then $Q^\sigma$ parametrizes $((A')^\sigma,(\imath')^\sigma,(\bar\varphi')^\sigma)$. Since $(A',\imath')$ is an L-abelian surface with QM by the lemma, there exists a $\cO_0$-isogeny
$\mu_\sigma':((A')^\sigma,(\imath')^\sigma)\rightarrow(A',\imath')$. This implies $((A')^\sigma,(\imath')^\sigma)$ is isogenous to $(A,\imath)$, hence $Q^\sigma\in[P]$. The main theorem of this section relates this action with the map $\rho_{(A,\imath,\varphi)}$ introduced in \S \ref{sec2} by means of the characterization of $[P]$ given in Proposition \ref{propisoclas}.
\begin{theorem}\label{TeoGalact}
Assume that $P=[g_\infty,1]\in X_\Gamma$ corresponds to a triple $(A,\imath,\bar\varphi)$, where $(A,\imath)/\bar\Q$ is an abelian $L$-surface with QM and $\bar\varphi$ is the $\Gamma$-equivalent class of the natural isomorphism
\[
\varphi:A_{tor}=((B\otimes\R)_{g_\infty}/\cO_0)_{tor}\longrightarrow B/\cO_0,
\] 
Then the map $\rho_{(A,\imath,\varphi)}: \Gal(\bar \Q/L) \longrightarrow G(\A_f)/\End^0(A,\imath)^\times$ constructed by means of $\varphi$ satisfies
\[
\psi_{g_\infty}([ g_f])^\sigma=\psi_{g_\infty}([g_f\rho_{(A,\imath,\varphi)}(\sigma)])\in [P],
\] 
for all $g_f\in G(\A_f)$ and $\sigma\in\Gal(\bar \Q/L)$, where $[\;\cdot\;]$ denothes the class in the double coset space $\Gamma\backslash G(\A_f)\slash\End^0(A,\imath)^\times$.
\end{theorem}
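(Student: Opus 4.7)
The plan is to produce an explicit $\cO_0$-isogeny $(A')^\sigma\to A$ witnessing that $Q^\sigma:=\psi_{g_\infty}([g_f])^\sigma$ lies in $[P]$, and then read off the class in $\Gamma\backslash G(\A_f)/\End^0(A,\imath)^\times$ corresponding to $Q^\sigma$ under the bijection $\psi_{g_\infty}$ of Proposition \ref{propisoclas}.

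First, I would identify $(A',\imath',\bar\varphi')$ corresponding to $Q=[g_\infty,g_f]$ via the complex uniformisation recalled in \S \ref{sec3}: $A'=(B\otimes\R)_{g_\infty}/I_{g_f}$ and $\varphi'(b)=bg_f^{-1}$. Choosing $n\in\Z$ with $nI_{g_f}\subseteq\cO_0$ yields a natural $\cO_0$-isogeny $\phi:A'\to A$, $b\mapsto nb$, and a direct computation gives
\[
\varphi\circ\phi = \varphi'\cdot(ng_f)\qquad\text{as maps } A'_{tor}\to B/\cO_0,
\]
where $G(\A_f)$ acts on $B/\cO_0$ on the right. By Lemma \ref{isoclasses}, $(A',\imath')$ is itself an abelian $L$-surface with QM, and after enlarging $M$ I may assume both $(A',\imath')$ and $\phi$ are defined over $M$.

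The next step is Galois conjugation. The point $Q^\sigma$ corresponds to $((A')^\sigma,(\imath')^\sigma,(\bar\varphi')^\sigma)$, and the composition
\[
\eta_\sigma := \mu_\sigma\circ\phi^\sigma:(A')^\sigma\xrightarrow{\phi^\sigma}A^\sigma\xrightarrow{\mu_\sigma}A
\]
is a $\cO_0$-isogeny. The defining relation $\varphi(\mu_\sigma(P^\sigma))=\varphi(P)\rho_{(A,\imath,\varphi)}(\sigma)$ may be rewritten, using the convention $\varphi^\sigma(P^\sigma):=\varphi(P)$, as $\varphi\circ\mu_\sigma = \varphi^\sigma\cdot\rho_{(A,\imath,\varphi)}(\sigma)$, where $\rho_{(A,\imath,\varphi)}(\sigma)$ acts on the right. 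Composing on the right with $\phi^\sigma$ then yields
\[
\varphi\circ\eta_\sigma = \varphi^\sigma\circ\phi^\sigma\cdot\rho_{(A,\imath,\varphi)}(\sigma) = (\varphi\circ\phi)^\sigma\cdot\rho_{(A,\imath,\varphi)}(\sigma) = (\varphi')^\sigma\cdot\bigl(ng_f\,\rho_{(A,\imath,\varphi)}(\sigma)\bigr),
\]
the last equality using that the fixed element $ng_f\in G(\A_f)$ acting on $B/\cO_0$ is unaffected by $\sigma$.

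Finally, the characterisation of the bijection $\psi_{g_\infty}$ in Proposition \ref{propisoclas}, applied to the triple $((A')^\sigma,(\imath')^\sigma,(\bar\varphi')^\sigma)$ together with the $\cO_0$-isogeny $\eta_\sigma$ to $(A,\imath)$, interprets the above formula as saying that $Q^\sigma=\psi_{g_\infty}([ng_f\rho_{(A,\imath,\varphi)}(\sigma)])$. Since $n\in\Q^\times\subseteq\End^0(A,\imath)^\times$ we have $[ng_f\rho_{(A,\imath,\varphi)}(\sigma)]=[g_f\rho_{(A,\imath,\varphi)}(\sigma)]$ in the double coset space, yielding the desired equality $\psi_{g_\infty}([g_f])^\sigma=\psi_{g_\infty}([g_f\rho_{(A,\imath,\varphi)}(\sigma)])$. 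The main obstacle is the careful bookkeeping around Galois conjugation---correctly interpreting $\varphi^\sigma$, its compatibility with $\phi^\sigma$, and verifying that the scalar factor $n$ can be absorbed into $\End^0(A,\imath)^\times$; the independence of $\rho_{(A,\imath,\varphi)}$ on the choice of $\{\mu_\sigma\}$ established in \S \ref{sec2} guarantees that neither the auxiliary isogeny $\phi$ nor the specific choice of $n$ affects the final outcome.
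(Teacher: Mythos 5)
Your proposal is correct and follows essentially the same route as the paper: you produce the explicit comparison $\varphi\circ\phi=\varphi'\cdot(ng_f)$ (the paper's equation \eqref{characgf}, up to replacing your isogeny $b\mapsto nb:A'\to A$ by the inclusion $A\to A_{g_f}$), conjugate by $\sigma$, compose with $\mu_\sigma$ (the paper uses $\mu_\sigma^\vee$, producing $\deg(\mu_\sigma)$ where you produce $n$), and read off the double coset via Proposition \ref{propisoclas}, with the scalar absorbed into $\Q^\times\subseteq\End^0(A,\imath)^\times$. The one step you state as a "characterisation" of $\psi_{g_\infty}$ --- that a comparison relation $\varphi\circ\eta=\varphi''\cdot h$ for \emph{some} $\cO_0$-isogeny $\eta$ pins down the class $[h]$ --- is exactly the same implicit appeal the paper makes ("as in the case of equation \eqref{characgf}"), so your write-up is at the same level of rigor as the published proof.
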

\begin{remark}
Note that, by Lemma \ref{comm}, the image $\rho_{(A,\imath,\varphi)}(\sigma)$ lies in the commutator of $\End^0(A,\imath)$ in $G(\A_f)$. Thus the product $g_f\rho_{(A,\imath,\varphi)}(\sigma)$ is well defined in $\Gamma\backslash G(\A_f)/\End^0(A,\imath)$.
\end{remark}
\begin{proof}
Recall that the abelian surface corresponding to $\psi_{g_\infty}([g_f])$ is given by the complex torus $A_{g_f}=(B\otimes\R)_{g_\infty}/I_{g_f}$, where $I_{g_f}=B\cap\hat\cO_0g_f$. Moreover, considering a representative of $[ g_f]$ such that $g_f^{-1}\in\hat\cO_0$, the $\cO_0$-isogeny between $(A,\imath)$ and $(A_{g_f},\imath_{g_f})$ is given by
\[
\phi_{g_f}:A=(B\otimes\R)_{g_\infty}/\cO_0\longrightarrow (B\otimes\R)_{g_\infty}/I_{g_f}=A_{g_f},\quad b\longmapsto b.
\]
Also recall that a representative of $\bar\varphi_{g_f}$ is given by
\[
\varphi_{g_f}:(A_{g_f})_{tor}=((B\otimes\R)_{g_\infty}/I_{g_f})_{tor}=B/I_{g_f}\longrightarrow B/\cO_0,\quad b\longmapsto bg_f^{-1}.
\] 
Thus one checks that 
\begin{equation}\label{characgf}
\varphi_{g_f}\circ\phi_{g_f}=\varphi g_f^{-1}:A_{tor}\rightarrow B/\cO_0.
\end{equation}

For any $\sigma\in\Gal(\bar \Q/L)$, the point $\psi_{g_\infty}([g_f])^\sigma$ corresponds to the triple $(A_{g_f}^\sigma,\imath_{g_f}^\sigma,\bar\varphi_{g_f}^{\sigma})$. We have the following $\cO_0$-isogenies
\[
(A_{g_f}^\sigma,\imath_{g_f}^\sigma)\stackrel{\phi_{g_f}^\sigma}{\longleftarrow}(A^\sigma,\imath^\sigma)\stackrel{\mu_\sigma}{\longrightarrow}(A,\imath)\stackrel{\phi_{g_f}}{\longrightarrow} ( A_{g_f},\imath_{g_f}),
\]
thus $(A_{g_f}^\sigma,\imath_{g_f}^\sigma)$ and $(A,\imath)$ are linked by the $\cO_0$-isogeny $\phi_{g_f}^\sigma\circ\mu_\sigma^\vee$.
This implies that, as in the case of equation \eqref{characgf}, we have a representative $g_f^\sigma\in G(\A_f)$ of the double coset $\psi_{g_\infty}^{-1}(\psi_{g_\infty}([g_f])^\sigma)\in\Gamma\backslash G(\A_f)/\End^0(A,\imath)$ satisfying $\varphi_{g_f}^\sigma\circ(\phi_{g_f}^\sigma\circ\mu_\sigma^\vee)=\varphi(g_f^\sigma)^{-1}$. Hence, for all $P\in A_{tor}$,
\begin{eqnarray*}
\varphi(P)\rho_{(A,\imath,\varphi)}(\sigma)(g_f^\sigma)^{-1}&=&\varphi(\mu_\sigma(P^\sigma))(g_f^\sigma)^{-1}=\deg(\mu_\sigma)\varphi_{g_f}^\sigma(\phi_{g_f}^\sigma(P^\sigma))\\
&=&\deg(\mu_\sigma)\varphi_{g_f}^\sigma(\phi_{g_f}(P)^\sigma)=\deg(\mu_\sigma)\varphi_{g_f}(\phi_{g_f}(P))\\
&=&\deg(\mu_\sigma)\varphi(P)g_f^{-1}
\end{eqnarray*}
We conclude $[g_f\rho_{(A,\imath,\varphi)}(\sigma)]=[g_f^\sigma]=\psi_{g_\infty}^{-1}(\psi_{g_\infty}([g_f])^\sigma)$ and the result follows.
\end{proof}

\section{Change of moduli interpretation}\label{sec5}

In section \S\ref{sec2}, we defined an abelian $L$-surface $(A,\imath)$ with QM by any Eichler order $\cO$ and defined the corresponding representation $\rho_{(A,\imath,\varphi)}$ attached to a fixed $\cO$-module isomorphism $\varphi:A_{tor}\rightarrow B/\cO$. Nevertheless, we used a maximal order $\cO_0$ to define the Shimura curve $X_\Gamma$ and to describe its moduli interpretation as the space classifying triples $(A_0,\imath_0,\bar\varphi_0)$, where $(A_0,\imath_0)$ has QM by $\cO_0$. In this section we shall change this moduli interpretation for some of this Shimura curves $X_\Gamma$ in order to classify abelian surfaces with QM by $\cO$.

Thus from now on $\cO$ will be an Eichler order in $B$ of level $N$ and $\cO_0$ a maximal order such that $\cO\subseteq\cO_0$. Fix the embedding $\lambda:\cO\hookrightarrow\cO_0$. Let $\Gamma$ be now an open subgroup of $\hat\cO^\times=(\cO\otimes\hat\Z)^\times$. Since $\hat\cO^\times$ is an open subset of $G(\hat\Z)=\hat\cO_0^\times$ by means of $\lambda$, the subgroup $\Gamma$ is also an open subgroup of $G(\hat\Z)$. Thus we can consider the Shimura curve $X_\Gamma$.

\begin{proposition}\label{propinterp}
We have an equivalence of moduli interpretations for the Shimura curve $X_\Gamma$. It either classifies:
\begin{itemize}
\item[(i)] Triples $(A_0,\imath_0,\bar\varphi_0)$, where $(A_0,\imath_0)$ is an abelian surface with QM by $\cO_0$ and $\bar\varphi_0$ is a $\Gamma$-equivalence class of $\cO_0$-module isomorphisms $\varphi_0:(A_0)_{tor}\rightarrow B/\cO_0$. 

\item[(ii)] Triples $(A,\imath,\bar\varphi)$, where $(A,\imath)$ is an abelian surface with QM by $\cO$ and $\bar\varphi$ is a $\Gamma$-equivalence class of $\cO$-module isomorphisms $\varphi:A_{tor}\rightarrow B/\cO$. 
\end{itemize}
\end{proposition}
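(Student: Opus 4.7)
The plan is to leverage the adelic complex-analytic uniformization $X_\Gamma(\C) = \Gamma_\infty \Gamma \backslash G(\A)/G(\Q)$ (plus cusps if $D=1$) recalled in Section \ref{sec3}: since this double coset space depends only on the group scheme $G$ and on the open subgroup $\Gamma \subseteq G(\hat\Z)$, not on the particular order used to build it, both moduli interpretations should arise from the same adelic parameter $g = (g_\infty, g_f) \in G(\A)$, differing only in the lattice used to model the complex torus.

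Concretely, for interpretation (ii) I would mimic the Section \ref{sec3} construction with $\cO$ replacing $\cO_0$: attach to $g$ the triple $(A_g, \imath_g, \bar\varphi_g)$ with $A_g = (B\otimes\R)_{g_\infty}/I_{g_f}$, where $I_{g_f} := \hat\cO g_f \cap B$; with $\imath_g$ given by left multiplication by $\cO$; and with $\varphi_g : (A_g)_{tor} = B/I_{g_f} \to B/\cO$ sending $b \mapsto b g_f^{-1}$. The key novelty is optimality of $\imath_g$, which reduces to the local fact that the left order of $I_{g_f, p} = \hat\cO_p g_{f,p}$ is exactly $\hat\cO_p$, so that globally $\{b \in B : b I_{g_f} \subseteq I_{g_f}\} = \cO$. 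Well-definedness modulo $\Gamma_\infty \Gamma \backslash G(\A)/G(\Q)$ proceeds just as in Section \ref{sec3}, using $\hat\cO \gamma = \hat\cO$ for $\gamma \in \Gamma \subseteq \hat\cO^\times$; surjectivity on isomorphism classes is standard, since an optimal $\cO$-embedding forces the lattice of any complex model of $A$ to have left order $\cO$, giving a local form $\hat\cO_p g_{f,p}$ pinned down modulo $\Gamma$ by $\bar\varphi$.

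Since interpretation (i) already identifies $X_\Gamma(\C)$ with isomorphism classes of triples $(A_0, \imath_0, \bar\varphi_0)$ via the same $g$, composing the two identifications yields the desired equivalence. For an intrinsic description, the inclusion $I_{g_f} \subseteq I^0_{g_f} := \hat\cO_0 g_f \cap B$ produces a canonical $\cO$-equivariant isogeny $\pi_g : A_g \to A_{0,g}$ whose kernel is locally $\hat\cO_{0,p}/\hat\cO_p$, hence globally isomorphic to $\cO_0/\cO$ as a left $\cO$-module; under $\pi_g$ the level structure $\varphi_g$ factors as $\varphi_{0,g}$ composed with the natural surjection $B/\cO \twoheadrightarrow B/\cO_0$. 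Thus the direction from (ii) to (i) sends $(A, \imath, \bar\varphi)$ to $A_0 := A/\varphi^{-1}(\cO_0/\cO)$ with the induced $\cO_0$-action and level structure, and the point requiring the most care is the well-definedness of the subgroup $K := \varphi^{-1}(\cO_0/\cO) \subseteq A_{tor}$: replacing $\varphi$ by $\varphi\gamma$ with $\gamma \in \Gamma$ a priori replaces $K$ by $\varphi^{-1}((\cO_0/\cO) \cdot \gamma^{-1})$, so one needs $(\cO_0/\cO) \cdot \hat\cO^\times = \cO_0/\cO$ inside $B/\cO$; this reduces to $\hat\cO_0 \cdot \hat\cO^\times = \hat\cO_0$, immediate from $\hat\cO^\times \subseteq \hat\cO_0^\times$. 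The remaining checks—that $\imath_0$ is optimal on $A/K$ and that the induced identification on Tate modules yields a well-defined $\bar\varphi_0$—are again of the same local left-order flavor and present no additional difficulty.
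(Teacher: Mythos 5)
Your proposal is correct, and its core coincides with the paper's own mechanism: the passage from (ii) to (i) by dividing $A$ by $C=\varphi^{-1}\bigl(\ker(B/\cO\to B/\cO_0)\bigr)=\varphi^{-1}(\cO_0/\cO)$, with the induced $\imath_0$ and the compatibility $\varphi_0\circ\phi=\lambda\circ\varphi$, is exactly the construction of Lemma \ref{leminterp}. Where you diverge is in how bijectivity is obtained: the paper builds an explicit inverse on the level of triples with full level structure, using the map $\lambda^\vee(b+\cO_0)=[\cO_0:\cO]\,b+\cO$ and the subgroup $C^\vee=\varphi_0^{-1}(\ker\lambda^\vee)$, and only then passes to $\Gamma$-classes; you instead rerun the adelic uniformization of \S\ref{sec3} with $\cO$ in place of $\cO_0$ and identify both interpretations with the same double coset space $\Gamma_\infty\Gamma\backslash G(\A)/G(\Q)$. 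Your route buys a cleaner global picture (both moduli problems visibly depend only on $G$ and $\Gamma$) and, via the inclusion $I_{g_f}\subseteq\hat\cO_0 g_f\cap B$, an immediate identification of the canonical isogeny with kernel $\cO_0/\cO$; the paper's route is more elementary in that it never needs the classification of $\cO$-lattices. Note that your surjectivity step (every surface with optimal QM by $\cO$ has lattice of the form $\hat\cO g_f\cap B$) silently uses that sated left lattices over an Eichler order are locally principal; this is true (Eichler orders are Bass/Gorenstein) and is the same fact the paper invokes without proof in \S\ref{sec2} when asserting $A_{tor}\simeq B/\cO$, so it is not a gap, but it deserves the one-line justification you give only as ``standard''. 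Finally, your explicit verification that $C$ is unchanged when $\varphi$ is replaced by $\varphi\gamma$, $\gamma\in\Gamma\subseteq\hat\cO^\times$ (via $\hat\cO_0\gamma=\hat\cO_0$), makes precise the step the paper dismisses with ``it is clear that the $\Gamma$-equivalence classes correspond'', which is a welcome addition.
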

In order to prove this proposition we will need the following lemma. Note that the embedding $\lambda:\cO\hookrightarrow\cO_0$ gives rise to a morphism $\lambda:B/\cO\rightarrow B/\cO_0$.
\begin{lemma}\label{leminterp}
There exists a one-to-one correspondence between triples $(A,\imath,\varphi)$, where $(A,\imath)$ is an abelian surface with QM by $\cO$ and $\varphi$ is a $\cO$-module isomorphism $\varphi:A_{tor}\rightarrow B/\cO$, and triples $(A_0,\imath_0,\varphi_0)$, where $(A_0,\imath_0)$ is an abelian surface  with QM by $\cO_0$ and $\varphi_0$ is a $\cO_0$-module isomorphism $\varphi_0:(A_0)_{tor}\rightarrow B/\cO_0$. A triple $(A,\imath,\varphi)$ corresponds to $(A_0,\imath_0,\varphi_0)$ if there exists an isogeny $\phi:A\rightarrow A_0$, such that 
$\varphi_0\circ\phi=\lambda\circ\varphi$ and $\phi\circ\imath(\alpha)=\imath_0(\lambda(\alpha))\circ\phi$, for all $\alpha\in\cO$. 
\end{lemma}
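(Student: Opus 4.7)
The plan is to construct the correspondence explicitly in both directions and verify that the two constructions are mutually inverse. The underlying structural fact driving everything is that $\lambda\colon\cO\hookrightarrow\cO_0$ induces a short exact sequence of left $\cO$-modules
\[
0 \longrightarrow \cO_0/\cO \longrightarrow B/\cO \stackrel{\lambda}{\longrightarrow} B/\cO_0 \longrightarrow 0,
\]
in which $\cO_0/\cO$ is a finite $\cO$-bimodule.

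Forward direction, $(A,\imath,\varphi)\mapsto(A_0,\imath_0,\varphi_0)$. I set $K_0 := \varphi^{-1}(\cO_0/\cO) \subset A_{tor}$, which by the bimodule property of $\cO_0/\cO$ is a finite $\imath(\cO)$-stable subgroup. Define $A_0 := A/K_0$ with quotient isogeny $\phi\colon A\to A_0$; the $\imath(\cO)$-action then descends through $\phi$. To extend it to a full $\cO_0$-action, I use the canonical identification $\End^0(A)=\End^0(A_0)$ induced by $\phi$ together with the natural extension $\imath^0\colon B\hookrightarrow\End^0(A)$; the claim is that $\imath^0(\cO_0)$ lies in $\End(A_0)$. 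This I verify analytically: writing $A=V/\Lambda$ with $\Lambda$ a left $\cO$-lattice in the left $B$-module $V_\Q:=\Lambda\otimes\Q$, the variety $A_0$ is realized as $V/\Lambda_0$ where $\Lambda_0\supset\Lambda$ satisfies $\Lambda_0/\Lambda\cong\cO_0/\cO$ and is locally isomorphic to $\cO_0$; then left multiplication by any $\beta\in\cO_0$ preserves $\Lambda_0$ and so descends to $\imath_0(\beta)\in\End(A_0)$. Optimality of the resulting embedding $\imath_0\colon\cO_0\hookrightarrow\End(A_0)$ follows from the same local argument. Finally, $\varphi_0\colon(A_0)_{tor}=A_{tor}/K_0\to B/\cO_0$, $\varphi_0(\phi(P)):=\lambda(\varphi(P))$, is a well-defined $\cO_0$-module isomorphism.

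Reverse direction, $(A_0,\imath_0,\varphi_0)\mapsto(A,\imath,\varphi)$. Writing $A_0=V/\Lambda_0$ complex-analytically, I use $\varphi_0$ to pin down a left $B$-module isomorphism $\psi\colon\Lambda_0\otimes\Q\stackrel{\simeq}{\to}B$ taking $\Lambda_0$ to $\cO_0$ and inducing $\varphi_0$ on the torsion quotient. Setting $\Lambda:=\psi^{-1}(\cO)\subset\Lambda_0$, the pair $A:=V/\Lambda$ with left-multiplication QM action $\imath$ and tautological torsion isomorphism $\varphi\colon A_{tor}\cong B/\cO$ does the job: the inclusion $\Lambda\subset\Lambda_0$ gives the isogeny $\phi\colon A\to A_0$, and $\varphi_0\circ\phi=\lambda\circ\varphi$ is immediate from the construction of $\psi$.

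The two constructions are visibly mutual inverses on isomorphism classes, yielding the claimed bijection. The main obstacle is the assertion in the forward direction that $\imath^0(\cO_0)$ descends to genuine rather than merely rational endomorphisms of $A_0$; this reduces analytically to the trivial observation that $\cO_0$ preserves the enlarged lattice $\Lambda_0$, which is precisely the reason for quotienting by $K_0$ in the first place.
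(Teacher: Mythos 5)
Your forward construction is sound and essentially the paper's: you quotient by $\varphi^{-1}(\ker\lambda)$ and descend/extend the action, and you even supply the lattice-theoretic justification (extension of the action to $\cO_0$, optimality) that the paper only asserts. The genuine problem is in the reverse direction. You claim that $\varphi_0$ "pins down" a global left $B$-module isomorphism $\psi:\Lambda_0\otimes\Q\to B$ with $\psi(\Lambda_0)=\cO_0$ \emph{inducing $\varphi_0$} on the torsion quotient. Such a $\psi$ does not exist in general. What $\varphi_0$ canonically gives is an adelic lift $T:\Lambda_0\otimes\A_f\to B\otimes\A_f$, which is $B\otimes\A_f$-linear and carries $\hat\Lambda_0$ onto $\hat\cO_0$; if a rational $\psi$ induced the same map as $T$ on all of $(A_0)_{tor}$, then $\psi\otimes\A_f-T$ would be an $\A_f$-linear map with image in the compact $\hat\cO_0$, hence zero, so $T$ would have to carry the $\Q$-subspace $\Lambda_0\otimes\Q$ into $B$ --- false for a general level structure. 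Concretely, global isomorphisms $\psi$ with $\psi(\Lambda_0)=\cO_0$ do exist (Eichler: one-sided ideals of $\cO_0$ in an indefinite $B$ are principal), but the torsion isomorphisms they induce form a single orbit of the countable group $\cO_0^\times$ (acting by right multiplication) inside the $\hat\cO_0^\times$-torsor of all $\cO_0$-isomorphisms $(A_0)_{tor}\to B/\cO_0$, so a general $\varphi_0$ is not induced by any rational $\psi$. Since the lemma is a bijection of triples carrying the specific $\varphi_0$ (this full level structure is exactly the data the Shimura curve records, and twisting it by an element of $\hat\cO_0^\times$ changes the triple), you cannot wave this away: with a $\psi$ inducing only a twist of $\varphi_0$, the required identity $\varphi_0\circ\phi=\lambda\circ\varphi$ fails and your two constructions are not mutually inverse at the level of triples.

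The repair is to treat the reverse direction the same way you treated the forward one, i.e.\ adelically, never lifting $\varphi_0$ rationally: set $\Lambda:=(\Lambda_0\otimes\Q)\cap T^{-1}(\hat\cO)\subset\Lambda_0$, put $A:=V/\Lambda$ with $\imath$ given by left multiplication by $\cO$ (which preserves $\hat\cO$, hence $\Lambda$) and $\varphi$ the isomorphism of torsion induced by $T$; the inclusion $\Lambda\subset\Lambda_0$ gives $\phi:A\to A_0$, and $\varphi_0\circ\phi=\lambda\circ\varphi$ holds by construction, with the two constructions now visibly inverse (the forward step recovers $\hat\Lambda_0=T^{-1}(\hat\cO_0)$). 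This is equivalent to what the paper does: it takes the quotient of $A_0$ by $C^\vee=\varphi_0^{-1}(\ker\lambda^\vee)$, where $\lambda^\vee(b+\cO_0)=[\cO_0:\cO]b+\cO$, so that only the canonical adelic data attached to $\varphi_0$ is ever used.
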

\begin{proof}
Given $(A,\imath,\varphi)$, let us consider the subgroup $C:=\varphi^{-1}(\ker(B/\cO\stackrel{\lambda}{\rightarrow}B/\cO_0))\subset A_{tor}$. Therefore, we can construct the abelian surface $A_0=A/C$ and the corresponding isogeny $\phi:A\rightarrow A_0$. Since $\cO\subseteq\cO_0$, for all $\alpha\in\cO$, we have $\alpha(\ker\lambda)\subseteq\ker\lambda$, hence, $\imath(\alpha)C\subseteq C$ and the embedding $\imath$ gives rise to an embedding $\imath_0:\cO\hookrightarrow\End(A_0)$. The $\cO$-module isomorphim $\varphi$ gives rise to a $\cO$-module isomorphism $\varphi_0$ that fits into the following commutative diagram:
\[
\xymatrix{
A_{tor}\ar[r]^{\varphi}\ar[d]_{\phi}&B/\cO\ar[d]^\lambda\\
(A_0)_{tor}=A_{tor}/C\ar[r]^{\qquad\varphi_0}&B/\cO_0
}
\] 
Hence $\varphi_0\circ\phi=\lambda\circ\varphi$. Moreover, the fact that $(A_0)_{tor}\simeq B/\cO_0$ as $\cO$-modules implies that $\imath_0$ can be extended to an embedding $\imath_0:\cO_0\hookrightarrow\End(A_0)$. Thus $(A_0,\imath_0)$ has QM by $\cO_0$. We have constructed the triple $(A_0,\imath_0,\varphi_0)$ corresponding to $(A,\imath,\varphi)$.

Finally, given $(A_0,\imath_0,\varphi_0)$, let us consider $C^\vee:=\varphi_0^{-1}(\ker(B/\cO_0\stackrel{\lambda^\vee}{\rightarrow}B/\cO))$, where $\lambda^\vee:B/\cO_0\rightarrow B/\cO$ is the well defined morphism $\lambda^\vee(b+\cO_0)=[\cO_0:\cO]b+\cO$.
We define $A:=A_0/C^\vee$. Notice that, for all $o\in\cO\subset\cO_0$, we have $\imath_0(o)(C^\vee)\subseteq C^\vee$. Hence $\imath_0$ gives rise to an embedding $\imath:\cO\hookrightarrow\End(A)$ and $\varphi_0$ provides a $\cO$-module isomorphism $\varphi:A_{tor}\rightarrow B/\cO$. It is easy to check that this construction is the inverse to the previous one, thus the result follows.
\end{proof}

Due to this previous lemma we can easily prove the above proposition:
\begin{proof}[Proof of Proposition \ref{propinterp}]
We know that the Shimura curve $X_\Gamma$ classify triples $(A_0,\imath_0,\bar\varphi_0)$ as in $(i)$. By the above Lemma, given a representative $\varphi_0$ of the $\Gamma$-equivalence class $\bar\varphi_0$, there exists a triple $(A,\imath,\varphi)$, where $\varphi$ is a $\cO$-module isomorphism. It is clear that the $\Gamma$-equivalence class $\bar\varphi_0$ corresponds to the $\Gamma$-equivalence class $\bar\varphi$.
\end{proof}

\begin{definition}
A \emph{triple with QM by $\cO$} is a triple $(A,\imath,\varphi)$, where $(A,\imath)$ is an abelian surface with QM by $\cO$ and $\varphi$ is a $\cO$-module isomorphism $\varphi:A_{tor}\rightarrow B/\cO$. A \emph{$L$-triple with QM by $\cO$} is a triple $(A,\imath,\varphi)$ with QM by $\cO$ such that $(A,\imath)$ is an abelian $L$-surface with QM.
\end{definition}

We denote the one-to-one correspondence of Lemma \ref{leminterp} by
\[
\Lambda^{\cO_0}_\cO:\left\{\mbox{Triples with QM by $\cO$}\right\}\longrightarrow\left\{\mbox{Triples with QM by $\cO_0$}\right\}
\]

Note that, given a $L$-triple $(A,\imath,\varphi)$ with QM by $\cO$, one can construct the projective representation 
\[
\rho_{(A,\imath,\varphi)}:\Gal(\bar L/L)\longrightarrow G(\A_f)/\End^0(A,\imath)^\times.
\]  
The following result relates the representations attached to triples associated by the correspondence $\Lambda^{\cO_0}_\cO$.

\begin{lemma}
Let $(A,\imath,\varphi)$ be a $L$-triple with QM by $\cO$.
Assume that $\Lambda^{\cO_0}_\cO(A,\imath,\varphi)=(A_0,\imath_0,\varphi_0)$, then $(A_0,\imath_0,\varphi_0)$ is a $L$-triple with QM by $\cO_0$ and
\[
\rho_{(A,\imath,\varphi)}=\rho_{(A_0,\imath_0,\varphi_0)},
\]
when we identify $G(\A_f)=\End^0_\cO(B/\cO)^\times=\End^0_{\cO_0}(B/\cO_0)^\times$ by means of $\lambda$.
\end{lemma}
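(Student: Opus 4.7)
The plan is to produce a family of $\cO_0$-isogenies for $(A_0,\imath_0)$ by transporting the given family $\mu=\{\mu_\sigma\}$ through the isogeny $\phi:A\to A_0$ from Lemma~\ref{leminterp}, and then to compare the two projective representations on torsion points using the key relation $\varphi_0\circ\phi=\lambda\circ\varphi$. Note that the surjection $\lambda:B/\cO\to B/\cO_0$ is $G(\A_f)$-equivariant for the right actions on both sides, since $G(\A_f)=(\cO^{opp}\otimes\A_f)^\times=(\cO_0^{opp}\otimes\A_f)^\times$ acts by right multiplication.

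First I would set $\nu_\sigma:=\phi\circ\mu_\sigma\circ(\phi^\sigma)^\vee:A_0^\sigma\to A_0$. By construction this isogeny intertwines $\imath_0^\sigma$ and $\imath_0$ on the subring $\lambda(\cO)\subset\cO_0$. To upgrade to $\cO_0$-equivariance I would use $\cO\otimes\Q=B=\cO_0\otimes\Q$: any $\alpha\in\cO_0$ has the form $\beta/n$ with $\beta\in\cO$, so the identity $\nu_\sigma\circ\imath_0^\sigma(\alpha)=\imath_0(\alpha)\circ\nu_\sigma$ holds in $\Hom(A_0^\sigma,A_0)\otimes\Q$, and hence in $\Hom(A_0^\sigma,A_0)$ itself by torsion-freeness. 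This shows simultaneously that $(A_0,\imath_0)$ is an abelian $L$-surface with QM and that $\nu=\{\nu_\sigma\}$ is an admissible family for computing $\rho_{(A_0,\imath_0,\varphi_0)}$.

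Next I would compute the representation on torsion. Given $Q\in(A_0)_{tor}$, use surjectivity of $\phi$ to write $Q=\phi(P)$ with $P\in A_{tor}$. Using $(\phi^\sigma)^\vee\circ\phi^\sigma=[\deg\phi]$ one gets $\nu_\sigma(Q^\sigma)=[\deg\phi]\cdot\phi(\mu_\sigma(P^\sigma))$; applying $\varphi_0$ and invoking successively $\varphi_0\circ\phi=\lambda\circ\varphi$, the defining equation of $\rho_{(A,\imath,\varphi)}(\sigma)$, and the $G(\A_f)$-equivariance of $\lambda$ gives
\[
\varphi_0(\nu_\sigma(Q^\sigma))=[\deg\phi]\,\lambda(\varphi(P))\cdot\rho_{(A,\imath,\varphi)}(\sigma)=[\deg\phi]\,\varphi_0(Q)\cdot\rho_{(A,\imath,\varphi)}(\sigma).
\]
Since this holds for every $Q$, it forces $\rho^{\nu}_{(A_0,\imath_0,\varphi_0)}(\sigma)=[\deg\phi]\cdot\rho_{(A,\imath,\varphi)}(\sigma)$ in $G(\A_f)$.

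Finally, $[\deg\phi]\in\Q^\times\subset\End^0(A_0,\imath_0)^\times$, so the scalar disappears in the quotient $G(\A_f)/\End^0(A_0,\imath_0)^\times$ and the desired equality $\rho_{(A_0,\imath_0,\varphi_0)}=\rho_{(A,\imath,\varphi)}$ follows. The main obstacle I expect is the $\cO$-to-$\cO_0$ upgrade in step one together with the parallel verification that the two embeddings $\varphi^*$ and $\varphi_0^*$ of $\End^0(A,\imath)^\times\simeq\End^0(A_0,\imath_0)^\times$ into $G(\A_f)$ match under conjugation by $\phi$; both issues reduce to the same $G(\A_f)$-equivariance of $\lambda$ exploited throughout.
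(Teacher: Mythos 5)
Your proof is correct and follows essentially the same route as the paper: you transport the isogenies via $\mu^0_\sigma=\phi\circ\mu_\sigma\circ(\phi^\sigma)^\vee$, use the defining relation $\varphi_0\circ\phi=\lambda\circ\varphi$ together with the right-action equivariance of $\lambda$ to compute on torsion, and absorb the resulting scalar $\deg\phi\in\Q^\times\subset\End^0(A_0,\imath_0)^\times$ in the projective quotient. The only cosmetic differences (writing $Q=\phi(P)$ instead of applying $\phi^\vee$ directly, and spelling out the $\cO$-to-$\cO_0$ upgrade by clearing denominators) do not change the argument.
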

\begin{proof}
We know that there exists an isogeny $\phi:A\rightarrow A_0$, such that 
$\varphi_0\circ\phi=\lambda\circ\varphi$ and $\phi\circ\imath(\alpha)=\imath_0(\lambda(\alpha))\circ\phi$, for all $\alpha\in\cO$. Since $(A,\imath)$ is an abelian $L$-surface with QM, there exists a set of $\cO$-isogenies $\mu=\{\mu_\sigma:(A^\sigma,\imath^\sigma)\rightarrow (A,\imath),\;\sigma\in\Gal(\bar \Q/L)\}$. The composition
\[
\mu_\sigma^0:A_0^\sigma\stackrel{(\phi^\sigma)^\vee}{\longrightarrow}A^\sigma\stackrel{\mu_\sigma}{\longrightarrow}A\stackrel{\phi}{\longrightarrow}A_0,
\]
satisfies 
\begin{eqnarray*}
\mu_\sigma^0\circ\imath_0(\lambda(o))^\sigma&=&\phi\circ\mu_\sigma\circ(\phi^\sigma)^\vee\circ\imath_0(\lambda(o))^\sigma=\phi\circ\mu_\sigma\circ\imath(o)^\sigma\circ(\phi^\sigma)^\vee\\
&=&\phi\circ\imath(o)\circ\mu_\sigma\circ(\phi^\sigma)^\vee=\imath_0(\lambda(o))\circ\mu_\sigma^0,\; \mbox{ for all }o\in\cO,
\end{eqnarray*}
thus $\mu_\sigma^0\circ\imath_0(\alpha)^\sigma=\imath_0(\alpha)\circ\mu_\sigma^0$ for all $\alpha\in\cO_0$. This implies that $(A_0,\imath_0)$ is an abelian $L$-surface with QM.

Moreover, for all $\sigma\in\Gal(\bar \Q/L)$ and $P\in (A_0)_{tor}$, we have
\begin{eqnarray*}
\varphi_0(\mu_\sigma^0(P^\sigma))&=&\varphi_0(\phi\circ\mu_\sigma\circ(\phi^\sigma)^\vee(P^\sigma))=\lambda(\varphi(\mu_\sigma(\phi^\vee(P)^\sigma)))\\
&=&\lambda(\varphi(\phi^\vee(P)))\rho^\mu_{(A,\imath,\varphi)}(\sigma)=\deg(\phi)\varphi_0(P)\rho^\mu_{(A,\imath,\varphi)}(\sigma).
\end{eqnarray*}
This implies that $\rho_{(A,\imath,\varphi)}(\sigma)=\rho_{(A_0,\imath_0,\varphi_0)}(\sigma)$.
\end{proof}

\begin{remark}
As a consequence of this lemma, we obtain that Theorem \ref{TeoGalact} also applies if we change the maximal order $\cO_0$ by a not necessarily maximal Eichler order $\cO$, considering the moduli interpretation $(ii)$ of Proposition \ref{propinterp}.
\end{remark}

\section{Duality}\label{sec6}

In this section we describe the dual of an abelian surface with quaternionic multiplication and its associated Weil pairing. 

Let $(A,\imath)/\C$ be an abelian surface with QM by $\cO$. Denote by $A^\vee/\C$ its dual abelian surface. Denote by $\langle\;,\;\rangle$ the Weil pairing 
\[
\langle\;,\;\rangle:A_{tor}\times A^\vee_{tor}\longrightarrow \{\zeta_n, \;n\in\N\},
\]
where $\{\zeta_n, n\in\N\}$ is the group of roots of unity. Such group is isomorphic to $\Q/\Z$ by means of the isomorphism
\[
\psi:\{\zeta_n, \;n\in\N\}\longrightarrow\Q/\Z;\quad e^{\frac{2\pi i}{n}m}\longmapsto m/n.
\]
Given $\cO$, we have the two-sided ideal
\[
\cO^\#=\{b\in B:\;\Trace(b\cO)\subseteq\Z\},
\]
where $\Trace$ denotes the reduced trace. Given any left $\cO$-ideal $I$, we denote by ${\rm Norm}(I)$ its reduced norm.

\begin{proposition}
The dual abelian surface $A^\vee$ admits a quaternionic multiplication $\imath^\vee$ such that $(A^\vee,\imath^\vee)$ is isogenous to $(A,\imath)$ by means of a $\cO$-isogeny $\varepsilon:(A,\imath)\rightarrow (A^\vee,\imath^\vee)$ of degree ${\rm Norm}(\cO^\#)^{-1}$. The dual isogeny $\varepsilon^\vee$ satisfies $\varepsilon^\vee=\varepsilon$ and the quaternionic multiplication $\imath^\vee$ satisfies $\imath^\vee(\alpha)=\imath(\bar\alpha)^\vee$, for all $\alpha\in\cO$. Moreover, for any $\cO$-module isomorphism $\varphi:A_{tor}\rightarrow B/\cO$, there exist $u\in \hat\Z^\times$ such that the Weil pairing satisfies
\[
\psi(\langle P,\varepsilon(Q)\rangle)=u\Trace(\varphi(P)\overline{\varphi(Q)}), 
\]
for any $P,Q\in A_{tor}$.
\end{proposition}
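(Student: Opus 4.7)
My approach is to work entirely in the analytic uniformization from Section~\ref{sec3}. After choosing a model in the $\bar\Q$-isomorphism class, one can take $A=V/\cO$ with $V=(B\otimes\R)_{g_\infty}$, the QM $\imath$ given by left multiplication, and $\varphi:A_{tor}=B/\cO\to B/\cO$ the identity; any other $\cO$-module isomorphism differs from this by right multiplication by an element $\gamma\in\hat\cO^\times$, and this $\gamma$ will produce the unit $u=\Norm(\gamma)\in\hat\Z^\times$ at the end.

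The first and most delicate step is to identify $A^\vee$ analytically with $V/\cO^\#$, where $\imath^\vee(\alpha)$ acts by left multiplication (well-defined since $\cO^\#$ is two-sided). Once this identification is in place, the inclusion $\cO\subseteq\cO^\#$ yields the natural projection
\[
\varepsilon:V/\cO\longrightarrow V/\cO^\#,
\]
which is visibly a $\cO$-isogeny, and its kernel $\cO^\#/\cO\simeq I_\varepsilon/\cO$ has $I_\varepsilon=\cO^\#$, so $\deg\varepsilon=\Norm(\cO^\#)^{-1}$ by the convention of Section~\ref{sec2}. To carry out the identification I would use the non-degenerate reduced-trace pairing $(x,y)\mapsto\Trace(x\bar y)$ on $B$: $\cO^\#$ is by definition the lattice dual to $\cO$ under it, and a suitable element $\mu\in B$ with $\bar\mu=-\mu$ and $\mu^2\in\Q_{<0}$ produces a Riemann form, after which the complex-torus dual of $V/\cO$ is identified with $V/\cO^\#$.

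The identity $\imath^\vee(\alpha)=\imath(\bar\alpha)^\vee$ then becomes an adjoint computation: for any $x,y\in B$, cyclicity of the reduced trace gives
\[
\Trace(\bar\alpha x\cdot \bar y)=\Trace(x\cdot\bar y\bar\alpha)=\Trace(x\cdot\overline{\alpha y}),
\]
showing that left multiplication by $\bar\alpha$ on $V$ is adjoint to left multiplication by $\alpha$ under the trace pairing; dualizing $\imath(\bar\alpha)$ therefore gives $\imath^\vee(\alpha)$. The self-duality $\varepsilon^\vee=\varepsilon$ follows from the same circle of ideas: under the natural $A^{\vee\vee}\cong A$, dualizing the inclusion $\cO\hookrightarrow\cO^\#$ gives back the inclusion $(\cO^\#)^\#\hookrightarrow\cO^\#$, which is the same inclusion because $(\cO^\#)^\#=\cO$ by perfectness and symmetry of the trace pairing.

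For the Weil pairing, the classical formula for a complex torus $V/L$ expresses $\langle P,Q\rangle$ in terms of the Riemann form underlying the chosen identification of the dual. With our identification that Riemann form is essentially $\Trace(x\bar y)$, and a direct computation yields
\[
\psi(\langle P,\varepsilon(Q)\rangle)\equiv\Trace(\tilde P\overline{\tilde Q})\pmod{\Z},
\]
for lifts $\tilde P,\tilde Q\in B$ of $P,Q\in A_{tor}=B/\cO$. Replacing the preferred $\varphi$ by an arbitrary $\cO$-module isomorphism multiplies the right-hand side by $\gamma\bar\gamma=\Norm(\gamma)\in\hat\Z^\times$, producing the unit $u$. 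The main obstacle will be Step~1 above: matching the canonically defined dual abelian variety (with its Weil pairing and functoriality) to the lattice model $V/\cO^\#$ through the trace pairing, keeping all sign conventions, complex structures, and the auxiliary choice of $\mu$ mutually coherent.
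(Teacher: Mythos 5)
Your plan follows the same route as the paper's proof --- analytic uniformization, identification of the dual torus through the reduced trace pairing, $\varepsilon$ induced by a lattice inclusion with kernel $\cO^\#/\cO$, symmetry of the pairing for $\varepsilon^\vee=\varepsilon$, the adjunction $\Trace(\bar\alpha x\bar y)=\Trace(x\overline{\alpha y})$ for $\imath^\vee(\alpha)=\imath(\bar\alpha)^\vee$, and a unit $u$ coming from the change of $\varphi$ --- but the step you yourself single out as the main obstacle is carried out incorrectly as written. The pairing $\Trace(x\bar y)$ is symmetric, hence it is not (the imaginary part of) a Riemann form, and the Weil pairing in the statement is the canonical duality pairing $A_{tor}\times A^\vee_{tor}\rightarrow\{\zeta_n\}$, not the pairing attached to a polarization. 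If you identify $A^\vee$ by means of the polarization associated to the genuine Riemann form $E_\mu(x,y)=\Trace(x\mu\bar y)$ with $\bar\mu=-\mu$, the lattice you land on is the $E_\mu$-dual of $\cO$, which is $\cO^\#\mu^{-1}$ rather than $\cO^\#$: the resulting isogeny has degree ${\rm Norm}(\mu)\,{\rm Norm}(\cO^\#)^{-1}$ and the pairing formula carries the extra factor $\mu$, which cannot be absorbed into a unit $u\in\hat\Z^\times$. Nor can you correct this by composing with right multiplication by $\mu$: that map commutes with the quaternionic action but not with the complex structure (it is $\C$-linear only when $\mu$ centralizes $h_{g_\infty}(i)$, i.e.\ $\mu\in h_{g_\infty}(\C)\cap B$, which is $\Q$ whenever $(A,\imath)$ has no CM), so it is not a morphism of complex tori.

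The repair is exactly what the paper does: the element playing the role of your $\mu$ must be $h_{g_\infty}(i)$ itself. Up to sign conventions, $S(x,y)=\Trace\bigl(x\,h_{g_\infty}(i)\,\bar y\bigr)+i\,\Trace(x\bar y)$ is antilinear in $x$ and $\C$-linear in $y$, so $y\mapsto S(\cdot,y)$ identifies $B\otimes\R$ with the space of antilinear functionals; since ${\rm Im}\,S(x,y)=\Trace(x\bar y)$, the lattice dual to $\cO$ under this identification is exactly $\cO^\#$ and the canonical Weil pairing becomes $\Trace(x\bar y)\bmod\Z$ --- no positivity and no rational $\mu$ are needed, because one is only identifying the dual torus, not polarizing $A$. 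Once this is in place, the remaining steps of your plan are correct and coincide with the paper's: $\deg\varepsilon={\rm Norm}(\cO^\#)^{-1}$ from $\ker\varepsilon\simeq\cO^\#/\cO$, $\varepsilon^\vee=\varepsilon$ from symmetry and $(\cO^\#)^\#=\cO$, $\imath^\vee(\alpha)=\imath(\bar\alpha)^\vee$ from the adjoint computation, and $u={\rm Norm}(\gamma)$ when $\varphi$ is altered by $\gamma\in\hat\cO^\times$. Note also that your reduction to the lattice $\cO$ quietly uses that every left $\cO$-ideal is principal (strong approximation for the indefinite $B$); the paper avoids this by working with an arbitrary ideal $I$ and an arbitrary $g_f\in G(\A_f)$, which yields $u={\rm Norm}(g_f)/{\rm Norm}(I)$.
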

\begin{proof}
We have seen that, as a complex torus, $A=(B\otimes\R)_h/I$, for some left $\cO$-ideal $I$ and some complex structure $h:\C\rightarrow B\otimes\R$. Its dual complex torus corresponds to $(B\otimes\R)^\vee_{h^\vee}/I^\vee$, where 
\[
(B\otimes\R)^\vee:=\{f:(B\otimes\R)\rightarrow\C,\;\R{\rm -linear},\;f(h(i)v)=-if(v)\},
\]\[
 I^\vee:=\{f\in (B\otimes\R)^\vee:\;{\rm Im}f(I)\subseteq\Z\},
\]
and the complex structure 
$h^\vee:\C^\times\longrightarrow \Aut_\R((B\otimes\R)^\vee)$ is given by $h^\vee(z)f=zf$. The non-degenerate pairing
\[
B\times B\longrightarrow \Q;\quad (b_1,b_2)\longmapsto \Trace(b_2\bar b_1),
\]
provides an isomorphism between $(B\otimes\R)^\vee_{h^\vee}$ and $(B\otimes\R)_h$
since $ \Trace(b_2h(i) \overline{h(i)}\bar b_1)= \Trace(b_2\bar b_1)$. Hence we obtain that 
\[
A^\vee\simeq(B\otimes\R)^\vee_{h^\vee}/I^\vee\simeq(B\otimes\R)_h/I^\#,\quad I^\#=\{b\in B:\;\Trace(b\bar I)\subseteq\Z\}.
\]
Since $I^\#$ is a left $\cO$-ideal, we deduce that $A^\vee$ admits a quaternionic multiplication $\imath^\vee$, and $(A^\vee,\imath^\vee)$ is in the $\cO$-isogeny class of $(A,\imath)$. A $\cO$-isogeny $\varepsilon:A\rightarrow A^\vee$ is given by the inclusion $\frac{1}{{\rm Norm}(I)}I\subseteq I^\#$. Its kernel corresponds to the quotient $I^\#/(\frac{1}{{\rm Norm}(I)}I)\simeq \cO^\#/\cO$, hence we obtain that $\deg(\varepsilon)={\rm Norm}(\cO^\#)^{-1}$. The isomorphism $(B\otimes\R)^\vee_{h^\vee}/I^\vee\simeq(B\otimes\R)_h/I^\#$, provides the following description of the Weil pairing:
\[
\begin{array}{ccl}
\langle\cdot,\cdot\rangle:A_{tor}\simeq B/I\times B/I^\#\simeq A^\vee_{tor}&\longrightarrow& \Q/\Z\stackrel{\psi}{\simeq}\{\zeta_n,\;n\in\N\}\\
(b,b')&\longmapsto&\Trace(b\bar b').
\end{array}
\]
Any isomorphism $\varphi:A_{tor}\rightarrow B/\cO$, is given by an element $g_f\in G(\A_f)$, such that $\hat\cO g_f\cap B=I$, and the composition 
\[
\varphi:A_{tor}\simeq B/I\stackrel{\simeq}{\longrightarrow}B/\cO;\quad b\longmapsto bg_f^{-1}.
\]
We compute, for $P,Q\in A_{tor}$ corresponding to $b,b'\in B/I$ respectivelly,
\[
\psi(\langle P,\varepsilon(Q)\rangle)=\Trace\left(b\frac{1}{{\rm Norm}(I)}\bar b'\right)=\frac{1}{{\rm Norm}(I)}\Trace(\varphi(P)g_f\overline{g_f}\overline{\varphi(Q)})=
u\Trace(\varphi(P)\overline{\varphi(Q)}),
\]
where $u={\rm Norm}(g_f)/{\rm Norm}(I)\in\hat\Z^{\times}$. The fact that $\varepsilon^\vee=\varepsilon$ follows from the above equality since, by definition, $\langle Q,\varepsilon^\vee(P)\rangle=\langle P,\varepsilon(Q)\rangle$ and the pairing given by the trace is symmetric. Finally, the identity $\imath^\vee(\alpha)=\imath(\bar\alpha)^\vee$ follows directly from the above description of the Weil pairing.
\end{proof}

\begin{remark}
If $\mu:(A_0,\iota_0)\rightarrow (A,\iota)$ is a $\cO$-isogeny, then we have 
\[
\mu^\vee\circ\iota^\vee(\alpha)=\mu^\vee\circ\iota(\bar\alpha)^\vee=(\iota(\bar\alpha)\circ\mu)^\vee=(\mu\circ\iota_0(\bar\alpha))^\vee=\iota_0(\bar\alpha)^\vee\circ\mu^\vee=\iota_0^\vee(\alpha)\circ\mu^\vee,
\]
hence $\mu^\vee:(A^\vee,\iota^\vee)\rightarrow (A_0^\vee,\iota_0^\vee)$ is a $\cO$-isogeny.
\end{remark}

\subsection{Weil pairing on abelian $L$-surfaces with QM}\label{WeiPair}

Let $(A,\iota)$ be an abelian $L$-surface with QM. Fix a model $(A,\imath)$ over some number field $M$, a set of $\cO$-isogenies $\mu=\{\mu_\sigma:(A^\sigma,\iota^\sigma)\rightarrow(A,\iota)\}$ defined over $M$ and a $\cO$-isomorphism $\varphi:A_{tor}\rightarrow B/\cO$. In \S\ref{sec2} we constructed the endomorphisms $\rho^\mu_{(A,\iota,\varphi)}(\sigma)\in G(\A_f)$, where $\sigma\in\Gal(\bar\Q/L)$, satisfying $\varphi(\mu_\sigma(P^\sigma))=\varphi(P)\rho^\mu_{(A,\iota,\varphi)}(\sigma)$, for all $P\in A_{tor}$. 

Since $(A^\vee,\imath^\vee)$ is $\cO$-isogenous to $(A,\imath)$, we know that $(A^\vee,\imath^\vee)$ is also an abelian $L$-surface with QM. Moreover, by the functorial description of the dual abelian surface, $(A^\vee,\imath^\vee)$ admits a model defined over $M$ satisfying $(A^\sigma)^\vee=(A^\vee)^\sigma$ and $\langle P^\sigma,Q^\sigma\rangle=\langle P,Q\rangle^\sigma$, for all $P\in A_{tor},\;Q\in A^\vee_{tor}$ and $\sigma\in\Gal(\bar\Q/\Q)$.

\begin{proposition}
We have that $\varepsilon:(A,\imath)\longrightarrow(A^\vee,\imath^\vee)$ is defined over $M$, moreover,
\[
\mu_\sigma^\vee\circ\varepsilon\circ\mu_\sigma=\deg(\mu_\sigma)\varepsilon^\sigma
\]
for all $\sigma\in\Gal(M/L)$.
\end{proposition}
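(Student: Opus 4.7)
My plan is to deduce both assertions from the essential uniqueness of $\cO$-isogenies $A\to A^\vee$: since any two such isogenies of the same degree differ by a unit in $\End^0(A,\imath)^\times$ acting on the right (by the same argument that underlies the very definition of $\varepsilon$ in the preceding proposition), it suffices to compare degrees and then pin down the resulting scalar.

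First I would verify that $\varepsilon$ is defined over $M$. For $\tau\in\Gal(\bar\Q/M)$, the pair $(A,\imath)$ is fixed by $\tau$, and hence so are $A^\vee$ and $\imath^\vee$ by the functoriality of the dual construction. Thus $\varepsilon^\tau$ is again a $\cO$-isogeny $A\to A^\vee$ of degree $\Norm(\cO^\#)^{-1}$, and the uniqueness statement above yields $\varepsilon^\tau=\varepsilon\circ c_\tau$ for some $c_\tau\in\End^0(A,\imath)^\times$ of reduced norm $1$. To pin $c_\tau$ down I would feed both sides into the Weil pairing formula of the preceding proposition: since the Weil pairing is Galois-equivariant and $\varphi^\tau$ differs from $\varphi$ by a fixed element of $G(\A_f)$, comparing $\psi(\langle P^\tau,\varepsilon^\tau(Q^\tau)\rangle)$ with $\psi(\langle P,\varepsilon(Q)\rangle)^\tau$ via the identity $\psi(\langle P,\varepsilon(Q)\rangle)=u\Trace(\varphi(P)\overline{\varphi(Q)})$ forces $c_\tau=1$.

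For the identity $\mu_\sigma^\vee\circ\varepsilon\circ\mu_\sigma=\deg(\mu_\sigma)\varepsilon^\sigma$, observe that both maps are $\cO$-isogenies from $A^\sigma$ to $(A^\sigma)^\vee=(A^\vee)^\sigma$ (the second equality by the remark preceding \S\ref{WeiPair}), and both have degree $\deg(\mu_\sigma)^2\Norm(\cO^\#)^{-1}$. Applying the uniqueness principle now to the pair $(A^\sigma,\imath^\sigma)$, one obtains $\mu_\sigma^\vee\circ\varepsilon\circ\mu_\sigma=\varepsilon^\sigma\circ\lambda_\sigma$ with $\lambda_\sigma\in\End^0(A^\sigma,\imath^\sigma)^\times$ satisfying $\deg(\lambda_\sigma)=\deg(\mu_\sigma)^2$. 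In the non-CM case $\End^0(A^\sigma,\imath^\sigma)=\Q$, forcing $\lambda_\sigma=\pm\deg(\mu_\sigma)$; in the CM case, the Rosati involutions induced by $\mu_\sigma^\vee\varepsilon\mu_\sigma$ and $\varepsilon^\sigma$ both act by quaternion conjugation on $\imath^\sigma(\cO)$, so $\lambda_\sigma$ lies in the Rosati-fixed part of the centre, which is again $\Q$, and the same dichotomy $\lambda_\sigma=\pm\deg(\mu_\sigma)$ results.

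The sign is fixed by a direct Weil pairing computation: unfolding $\psi(\langle\mu_\sigma(P^\sigma),\varepsilon(\mu_\sigma(Q^\sigma))\rangle_A)$ via $\varphi(\mu_\sigma(P^\sigma))=\varphi(P)\rho(\sigma)$ gives $u\Norm(\rho(\sigma))\Trace(\varphi(P)\overline{\varphi(Q)})$, whereas on the other side $\psi(\langle P^\sigma,\varepsilon^\sigma(Q^\sigma)\rangle_{A^\sigma})$ matches $u\Trace(\varphi(P)\overline{\varphi(Q)})^\sigma$, and both expressions carry the same sign of $\deg(\mu_\sigma)$. The main obstacle I foresee lies in the bookkeeping of these Galois twists $\varphi^\sigma$ and the interplay between the cyclotomic character and $\Norm(\rho(\sigma))$ (which is the subject of the later Theorem \ref{normrho}); the clean way to circumvent the sign ambiguity without invoking that theorem is to argue that $\varepsilon$ is a positive polarization and so is its pullback along $\mu_\sigma$, which rules out the negative sign uniformly in the CM and non-CM cases.
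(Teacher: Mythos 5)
Your strategy is sound and genuinely different from the paper's. The paper proves the displayed identity first, for all $\sigma$, by passing to the Weil restriction $X=\Res_L A$, equipping it with the product polarization $\underline\varepsilon=(\varepsilon^\sigma)$, computing the Rosati involution of the endomorphisms $\lambda_\tau$ built from the $\mu_\tau$, and invoking positivity of that Rosati involution to identify $\mu_\tau\circ(\varepsilon^\tau)^{-1}\circ\mu_\tau^\vee\circ\varepsilon$ with a positive rational, which the degree count pins to $\deg(\mu_\tau)$; the statement that $\varepsilon$ is defined over $M$ then comes for free by specializing to $\sigma\in\Gal(M'/M)$ with $\mu_\sigma=\mathrm{Id}$. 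You instead work directly on $A$ and $A^\sigma$: you compare the two polarizations $\mu_\sigma^\vee\circ\varepsilon\circ\mu_\sigma$ (the pullback $\mu_\sigma^*\varepsilon$) and $\varepsilon^\sigma$, observe their quotient $\lambda_\sigma$ lies in $\End^0(A^\sigma,\imath^\sigma)^\times$, and fix it by degree plus positivity. This buys a more local, hands-on argument and avoids the restriction-of-scalars bookkeeping; the paper's packaging buys the over-$M$ statement as a corollary of the same computation, whereas you must treat it separately.

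Two points need tightening. First, in the CM case your stated reason that $\lambda_\sigma\in\Q$ (``the Rosati involutions \dots act by quaternion conjugation on $\imath^\sigma(\cO)$'') does not by itself control the action on the centre $K=\End^0(A^\sigma,\imath^\sigma)$; the correct argument is the standard fact that the quotient of two polarizations is fixed by the Rosati involution of either one, together with positivity forcing that involution to restrict to complex conjugation on the imaginary quadratic $K$, whose fixed field is $\Q$ (in fact the eigenvalues of such a quotient are positive reals, which kills the sign at once and makes your $\pm$ discussion unnecessary). Second, in the over-$M$ part, the Weil-pairing comparison does not immediately ``force $c_\tau=1$'': writing $\varphi(P^\tau)=\varphi(P)g_\tau$, the identity only yields $\Norm(g_\tau)\,c_\tau=\chi(\tau)$, so without Theorem \ref{normrho} (whose proof uses this proposition, so you cannot invoke it) you only learn that $c_\tau$ is a rational scalar that is a unit at every finite place, i.e.\ $c_\tau=\pm1$; you do flag this and your proposed fix — that $\varepsilon^\tau$ is again a polarization while $-\varepsilon$ is not — closes the gap, and is in spirit the same positivity input the paper uses.
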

\begin{proof}
First we consider a Galois extension $M'/M$ big enough such that $\varepsilon$ is defined over $M'$ and we extend the set of $\cO$-isogenies in the natural way: for $\sigma\in\Gal(M'/L)$, we define $\mu_\sigma:=\mu_{\pi(\sigma)}$, where $\pi:\Gal(M'/L)\rightarrow\Gal(M/L)$ is the natural projection.
We consider the Weil restriction of scalars $X=\Res_L A$. It is an abelian variety defined over $L$ and isomorphic over $\bar\Q$ to $X\simeq_{\bar\Q}\prod_{\sigma\in\Gal(M'/L)} A^\sigma$. For any $\tau\in\Gal(M'/L)$, the isogeny $\mu_\tau$ gives rise to an endomorphism $\lambda_\tau\in\End(X)$ defined by:
\[
\lambda_\tau:\prod_\sigma A^\sigma\longrightarrow\prod_\sigma A^\sigma,\quad(P_\sigma)_\sigma\longmapsto (\mu_\tau^{\sigma}(P_{\sigma\tau}))_{\sigma}
\]
Note that we have the polarization 
\[
\underline\varepsilon:=(\varepsilon^\sigma):X=\prod_\sigma A^\sigma\longrightarrow\prod_\sigma (A^\vee)^\sigma=X.
\]
Hence the Rosati involution of $\lambda_\tau$ with respect to $\underline\varepsilon$ is given by 
\begin{eqnarray*}
\lambda_\tau^\dagger(P_\sigma)_\sigma&=&\underline\varepsilon^{-1}\circ\lambda_\tau^\vee\circ\underline\varepsilon(P_\sigma)_\sigma=\underline\varepsilon^{-1}\circ\lambda_\tau^\vee(\varepsilon^\sigma(P_\sigma))_\sigma\\
&=&\underline\varepsilon^{-1}((\mu_\tau^\vee)^{\sigma}(\varepsilon^\sigma(P_\sigma))_{\sigma\tau})=((\varepsilon^{\sigma\tau})^{-1}((\mu_\tau^\vee)^{\sigma}(\varepsilon^\sigma(P_\sigma))))_{\sigma\tau}\\
&=&(((\varepsilon^{\tau})^{-1}\circ\mu_\tau^\vee\circ\varepsilon)^\sigma(P_\sigma))_{\sigma\tau}.
\end{eqnarray*}
Thus 
$\lambda_\tau\circ\lambda_\tau^\dagger$ acts as the diagonal matrix with entries $(\mu_\tau\circ(\varepsilon^{\tau})^{-1}\circ\mu_\tau^\vee\circ\varepsilon)^\sigma\in\End(A^\sigma,\imath^\sigma)$.
Due to the fact that the Rosati involution is positive, we have $\mu_\tau\circ(\varepsilon^{\tau})^{-1}\circ\mu_\tau^\vee\circ\varepsilon\in\Q^{>0}$. Since $\deg((\varepsilon^{\tau})^{-1}\circ\mu_\tau^\vee\circ\varepsilon)=\deg(\mu_\tau)$, We conclude that $(\varepsilon^{\tau})^{-1}\circ\mu_\tau^\vee\circ\varepsilon=\hat\mu_\tau$, hence $\mu_\sigma^\vee\circ\varepsilon\circ\mu_\sigma=\deg(\mu_\sigma)\varepsilon^\sigma$. Since this equality is also true for any $\sigma\in\Gal(M'/M)$ where $\mu_\sigma={\rm Id}$, we also deduce that $\varepsilon$ is defined over $M$.
\end{proof}

Let $\chi:\Gal(\bar\Q/\Q)\rightarrow\hat\Z$ be the cyclotomic character that provides the Galois action on $\{\zeta_n,\;n\in\N\}\simeq \Q/\Z$, namely $\psi(\zeta_n^\sigma)=\chi(\sigma)\psi(\zeta_n)$, for any root of unity $\zeta_n$ and $\sigma\in\Gal(\bar\Q/\Q)$.

\begin{theorem}\label{normrho}
We have that
$$
{\rm Norm}(\rho^\mu_{(A,\imath,\varphi)}(\sigma))=\deg(\mu_\sigma)\chi(\sigma),
$$
for all $\sigma\in\Gal(\bar\Q/L)$.
\end{theorem}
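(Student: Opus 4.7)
The plan is to exploit the explicit Weil pairing formula proved in \S\ref{sec6}, namely $\psi(\langle P,\varepsilon(Q)\rangle)=u\Trace(\varphi(P)\overline{\varphi(Q)})$ with $u\in\hat\Z^\times$, and to compute the quantity $\psi(\langle \mu_\sigma(P^\sigma),\varepsilon(\mu_\sigma(Q^\sigma))\rangle)$ in two different ways. Writing $\rho:=\rho^\mu_{(A,\imath,\varphi)}(\sigma)$ for brevity, the defining identity $\varphi(\mu_\sigma(P^\sigma))=\varphi(P)\rho$ together with $\rho\bar\rho=\Norm(\rho)\in\A_f^\times$ (a central scalar) gives, directly from the Weil formula,
\[
\psi(\langle \mu_\sigma(P^\sigma),\varepsilon(\mu_\sigma(Q^\sigma))\rangle)=u\Trace(\varphi(P)\rho\,\bar\rho\,\overline{\varphi(Q)})=\Norm(\rho)\,\psi(\langle P,\varepsilon(Q)\rangle),
\]
where we used that conjugation reverses order and that the reduced norm factors out of the trace.

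On the other hand, the adjointness $\langle\phi(R),S\rangle=\langle R,\phi^\vee(S)\rangle$ between an isogeny and its dual, applied to $\mu_\sigma$, transforms the same pairing into $\langle P^\sigma,(\mu_\sigma^\vee\circ\varepsilon\circ\mu_\sigma)(Q^\sigma)\rangle$. Using the relation $\mu_\sigma^\vee\circ\varepsilon\circ\mu_\sigma=\deg(\mu_\sigma)\varepsilon^\sigma$ from the previous proposition, the fact that $\varepsilon^\sigma(Q^\sigma)=\varepsilon(Q)^\sigma$, biadditivity of the Weil pairing (pulling out the integer $\deg(\mu_\sigma)$ as an exponent), and Galois equivariance $\langle P^\sigma,R^\sigma\rangle=\langle P,R\rangle^\sigma$, applying $\psi$ and the definition of the cyclotomic character yields
\[
\psi(\langle \mu_\sigma(P^\sigma),\varepsilon(\mu_\sigma(Q^\sigma))\rangle)=\deg(\mu_\sigma)\,\chi(\sigma)\,\psi(\langle P,\varepsilon(Q)\rangle).
\]

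Equating the two expressions gives $\Norm(\rho)\,\psi(\langle P,\varepsilon(Q)\rangle)=\deg(\mu_\sigma)\chi(\sigma)\,\psi(\langle P,\varepsilon(Q)\rangle)$ for every $P,Q\in A_{tor}$. Since $\varepsilon$ is an isogeny and the trace pairing on $B$ is non-degenerate, the values $\psi(\langle P,\varepsilon(Q)\rangle)$ generate $\Q/\Z$ as a $\hat\Z$-module, so the two scalars in $\hat\Z$ must coincide, giving $\Norm(\rho)=\deg(\mu_\sigma)\chi(\sigma)$.

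The principal point requiring care is bookkeeping for the right-action of $G(\A_f)=\End_\cO^0(B/\cO)^\times$ on $B/\cO$ together with quaternionic conjugation: one must verify that $\overline{\varphi(Q)\rho}=\bar\rho\,\overline{\varphi(Q)}$ and that $\rho\bar\rho$ lies in the center $\A_f^\times$, so that it can be pulled out of $\Trace$. Once this is in place the rest is a bilinear manipulation of the pairing, and the identification of the scalar with $\chi(\sigma)$ is automatic from the definition of the cyclotomic character through $\psi$.
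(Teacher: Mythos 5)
Your proof is correct and follows essentially the same route as the paper: the paper's proof is exactly this computation, written as a single chain of equalities starting from $\deg(\mu_\sigma)\chi(\sigma)\psi(\langle P,\varepsilon(Q)\rangle)$ and using the identity $\mu_\sigma^\vee\circ\varepsilon\circ\mu_\sigma=\deg(\mu_\sigma)\varepsilon^\sigma$, Galois equivariance, and the trace formula for the Weil pairing, with the conjugation bookkeeping $\overline{\varphi(Q)\rho}=\bar\rho\,\overline{\varphi(Q)}$ and the final nondegeneracy step left implicit. Your version merely makes those last points explicit.
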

\begin{proof}
We have seen that there exist $u\in\A_f^\times$ such that $\psi(\langle P,\varepsilon(Q)\rangle)=u\Trace(\varphi(P)\overline{\varphi(Q)})$. Thus we compute, for all $P,Q\in A_{tor}$,
\begin{eqnarray*}
\deg(\mu_\sigma)\chi(\sigma)\psi(\langle P,\varepsilon(Q)\rangle)&=&\deg(\mu_\sigma)\psi(\langle P,\varepsilon(Q)\rangle^\sigma)=\psi(\langle P^\sigma,\deg(\mu_\sigma)\varepsilon^\sigma(Q^\sigma)\rangle)\\
&=&\psi(\langle P^\sigma,\mu_\sigma^\vee\circ\varepsilon\circ\mu_\sigma(Q^\sigma)\rangle)=\psi(\langle \mu_\sigma(P^\sigma),\varepsilon(\mu_\sigma(Q^\sigma))\rangle)\\
&=&u\Trace(\varphi(\mu_\sigma(P^\sigma))\overline{\varphi(\mu_\sigma(Q^\sigma))})\\
&=&u\Trace(\varphi(Q)\;\rho^\mu_{(A,\imath,\varphi)}(\sigma)\;\overline{\rho^\mu_{(A,\imath,\varphi)}(\sigma)}\;\overline{\varphi( P))})\\
&=&{\rm Norm}(\rho^\mu_{(A,\iota,\varphi)}(\sigma))u\Trace(\varphi( P)\overline{\varphi(Q)})\\
&=&{\rm Norm}(\rho^\mu_{(A,\iota,\varphi)}(\sigma))\psi(\langle P,\varepsilon(Q)\rangle),
\end{eqnarray*}
and the result follows.
\end{proof}

\begin{corollary}
The composition
\[
\Gal(\bar\Q/L)\stackrel{\rho_{(A,\imath,\varphi)}}{\longrightarrow}G(\A_f)/\End^0(A,\imath)^\times\stackrel{{\rm Norm}}{\longrightarrow}\A_f^\times/\Q^{>0}\simeq \A_\Q^\times/\Q^\times\R^{>0}
\]
factors through $\Gal(\Q^{ab}/(L\cap\Q^{ab}))$ and it is given by the restriction of the inverse of the Artin map.
\end{corollary}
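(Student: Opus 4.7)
The strategy is to extract the corollary directly from Theorem \ref{normrho} combined with Kronecker--Weber and the explicit form of global reciprocity for $\Q$.

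First, I would invoke Theorem \ref{normrho} to write
\[
{\rm Norm}(\rho^\mu_{(A,\imath,\varphi)}(\sigma)) = \deg(\mu_\sigma)\,\chi(\sigma),
\]
and observe that $\deg(\mu_\sigma)\in\Q^{>0}$, so modulo $\Q^{>0}$ the norm equals $\chi(\sigma)$. To know this expression is meaningful at the level of the statement, I must first verify that the reduced norm descends to a well-defined map $G(\A_f)/\End^0(A,\imath)^\times \to \A_f^\times/\Q^{>0}$. In the non-CM case, $\End^0(A,\imath)^\times=\Q^\times$ embeds as scalars and its reduced norm is the squaring map, landing in $\Q^{>0}$. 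In the CM case, $\End^0(A,\imath)^\times=K^\times$ for $K$ an imaginary quadratic field, and the reduced norm restricts to $\alpha\mapsto\alpha\bar\alpha=N_{K/\Q}(\alpha)$, which again lies in $\Q^{>0}$. Hence the composition in the statement coincides with the cyclotomic character, viewed in $\A_f^\times/\Q^{>0}\simeq\A_\Q^\times/\Q^\times\R^{>0}$.

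Next, by Kronecker--Weber the cyclotomic character $\chi:\Gal(\bar\Q/\Q)\to\hat\Z^\times$ factors through $\Gal(\Q^{\mathrm{ab}}/\Q)$. Restricting to $\Gal(\bar\Q/L)$ and composing with the canonical projection $\Gal(\bar\Q/L)\twoheadrightarrow\Gal(\Q^{\mathrm{ab}}/(L\cap\Q^{\mathrm{ab}}))$ (induced by restriction to $\Q^{\mathrm{ab}}$) gives the desired factorization. For the identification with the inverse Artin map, I would use the standard isomorphism $\A_\Q^\times/\Q^\times\R^{>0}\simeq\hat\Z^\times$ --- each idele class has a unique representative with finite components in $\hat\Z^\times$ and positive real part --- and recall that, with the geometric normalization of global reciprocity for $\Q$, a geometric Frobenius $\mathrm{Frob}_p$ at an unramified prime corresponds to the class of the prime idele $\pi_p$. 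A direct calculation at such prime ideles shows this prescription is exactly $\chi$, establishing the identification.

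The only subtle point is bookkeeping normalizations (arithmetic versus geometric Frobenius and the matching convention for global reciprocity); once those are fixed, the corollary is immediate from Theorem \ref{normrho} and classical class field theory.
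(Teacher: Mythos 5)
Your proposal is correct and follows the route the paper intends: the corollary is stated as an immediate consequence of Theorem \ref{normrho}, with $\deg(\mu_\sigma)\in\Q^{>0}$ absorbed into the quotient and the cyclotomic character identified with the inverse Artin map on $\A_\Q^\times/\Q^\times\R^{>0}\simeq\hat\Z^\times$ via class field theory. Your extra verification that ${\rm Norm}$ is well defined on $G(\A_f)/\End^0(A,\imath)^\times$ (reduced norm of $\Q^\times$ or of $K^\times$, $K$ imaginary quadratic, lands in $\Q^{>0}$) is a sensible addition the paper leaves implicit.
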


\section{Complex Multiplication abelian $K$-surfaces with QM}\label{sec7}

In this section we shall deal with the Complex Multiplication (CM) case.
Hence, only for this section, we assume that the abelian surface $(A,\imath)$ with QM by $\cO$ has also CM by $K$, so is to say, $\End^0(A,\imath)=K$ an imaginary quadratic field. The following result describes the projective representation $\rho^N_{(A,\imath,\varphi)}$ (and therefore $\rho_{(A,\imath,\varphi)}$) in the CM case:
\begin{proposition}\label{PropCM}
Assume that $\End^0(A,\imath)=K$ an imaginary quadratic field and let $\A_{K,f}$ be the ring of finite adeles of $K$. We have the following results
\begin{itemize}
\item[(i)] Any abelian surface $(A',\imath')$ with QM by $\cO$ and CM by $K$ is isogenous to $(A,\imath)$. 

\item[(ii)] We can chose a representative of the isomorphism class of $(A,\imath)$ defined over $\bar\Q$. Moreover, $(A,\imath)$ is an abelian $\Q$-surface with QM.

\item[(iii)] Given any isomorphism $\varphi:A_{tor}\rightarrow B/\cO$, the restriction to $\Gal(\bar \Q/K)$ of the corresponding projective representation $\rho^N_{(A,\imath,\varphi)}$  factors through the inverse of the Artin map
${\rm Art}:\A_{K,f}^\times/K^\times\longrightarrow\Gal(K^{ab}/K)$, namely, we have the following commutative diagram
\[
\xymatrix{
\Gal(\bar\Q/\Q)\ar[rr]^{\rho^N_{(A,\imath,\varphi)}}&&N_A/K^\times\\
\Gal(\bar \Q/K)\ar@{^(->}[u]\ar@{->>}[r]&\Gal(K^{ab}/K)\ar[r]^{{\rm Art}^{-1}}&\A_{K,f}^\times/K^\times.\ar@{^(->}[u]
}
\]

\item[(iv)] The representation $\rho^N_{(A,\imath,\varphi)}$ factors through $\Gal(K^{ab}/K)\rtimes\Gal(K/\Q)$ sending the complex conjugation $\sigma_c=[1,\sigma_0]$, where $\langle\sigma_0\rangle=\Gal(K/\Q)$, to the class $jK^\times$, where $j\in N_A\cap B^\times$ is any element satisfying $j^2\in\Q^\times$ and $jk=\bar kj$ for all $k\in \A_{K,f}^\times$.
\end{itemize}
\end{proposition}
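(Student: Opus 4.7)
The plan is to handle the four claims sequentially, with (iii) being the substantive step via Shimura's reciprocity.

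For (i), the description in \S \ref{sec3} shows that if $g=(g_\infty,1)$ represents $P\in X_\Gamma(\C)$, then $\End^0(A_g,\imath_g)^\times=\{\gamma\in G(\Q):g_\infty\gamma g_\infty^{-1}\in\Gamma_\infty\}$; setting this equal to $K^\times$ forces $h_{g_\infty}$ to factor through the centralizer of $K\subset B$ in $B\otimes\R$, which is $K\otimes\R\simeq\C$. Hence $h_{g_\infty}$ is determined, up to $\Gamma_\infty$, by the embedding $\iota_K:K\hookrightarrow B$. By Skolem--Noether any two such embeddings are $B^\times$-conjugate, so all CM-by-$K$ points lie in a single $\Gamma_\infty\backslash G(\R)/G(\Q)$-orbit, which Proposition \ref{propisoclas} identifies with a single $\cO$-isogeny class. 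For (ii), the main theorem of complex multiplication provides a model of $(A,\imath)$ over $\bar\Q$; for $\sigma\in\Gal(\bar\Q/\Q)$ the conjugate $(A^\sigma,\imath^\sigma)$ has QM by $\cO$ and commutant $K^\sigma=K$ (as $K$ is Galois over $\Q$), so by (i) it is $\cO$-isogenous to $(A,\imath)$, making the latter an abelian $\Q$-surface with QM.

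For (iii), write $P=[g_\infty,1]$ with $g_\infty$ encoding the CM embedding $\iota_K$. By Shimura's reciprocity law for CM points on $X_\Gamma$, for any $a\in\A_{K,f}^\times$ with $\sigma=\mathrm{Art}(a)\in\Gal(K^{ab}/K)$,
\[
P^\sigma=[g_\infty,\,\iota_K(a)^{-1}]\in X_\Gamma(\bar\Q),
\]
where $\iota_K(a)^{-1}\in(K\otimes\A_f)^\times\subset G(\A_f)$. Theorem \ref{TeoGalact} gives the alternative description $P^\sigma=[g_\infty,\,\rho^N_{(A,\imath,\varphi)}(\sigma)]$, and comparing inside $\Gamma\backslash G(\A_f)/K^\times$ forces $\rho^N_{(A,\imath,\varphi)}(\sigma)\equiv\iota_K(a)^{-1}\pmod{K^\times}$, which is exactly the claimed commutative diagram.

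For (iv), by (ii) and (iii) the representation extends to $\Gal(\bar\Q/\Q)$ and factors through $\Gal(K^{ab}/K)\rtimes\Gal(K/\Q)$; it remains to evaluate it on a lift $\sigma_c$ of the nontrivial element of $\Gal(K/\Q)$. Apply Lemma \ref{comm} with $\lambda=k\in K^\times$: since $\mathrm{Lie}(A^{\sigma_c})$ carries the conjugate complex structure on $\mathrm{Lie}(A)$, the differential of $\mu_{\sigma_c}$ is $\C$-antilinear and a short Lie-algebra computation gives $\mu_{\sigma_c}\circ k^{\sigma_c}=\bar k\circ\mu_{\sigma_c}$. Hence $\mu_{\sigma_c}^\vee k\mu_{\sigma_c}/\deg(\mu_{\sigma_c})=(\bar k)^{\sigma_c}$, and the subsequent $\sigma_c^{-1}$-twist yields $\bar k\in K$. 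Lemma \ref{comm} then gives $\rho^N(\sigma_c)k^\ast\rho^N(\sigma_c)^{-1}=\bar k^\ast$, so $\rho^N(\sigma_c)$ normalizes $K^\times$ by complex conjugation and lies in the nontrivial coset $jK^\times$ of $N_A/K^\times$ described before the proposition. The main obstacle is invoking Shimura's reciprocity in (iii) with idelic conventions that match the right action of $G(\A_f)$ used in Theorem \ref{TeoGalact}; once that convention is pinned down, (iv) follows from Lemma \ref{comm} together with the Lie-algebra computation sketched above.
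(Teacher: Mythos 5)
Parts (i) and (ii) of your proposal are correct and coincide with the paper's argument (Skolem--Noether applied to the embeddings $K\hookrightarrow B$ cut out by $\End^0(A_g,\imath_g)^\times$, combined with Proposition \ref{propisoclas}, and CM theory for definability over $\bar\Q$). In (iii) you also take the paper's route (Shimura reciprocity plus Theorem \ref{TeoGalact}), but there is a gap in the last step: comparing the two descriptions of $P^\sigma$ at a \emph{single} level $\Gamma$ only yields an equality of double cosets in $\Gamma\backslash G(\A_f)/K^\times$, i.e.\ $\rho^\mu_{(A,\imath,\varphi)}(\sigma)\in\Gamma\,{\rm Art}^{-1}(\sigma)\,K^\times$; it does not by itself ``force'' $\rho^N_{(A,\imath,\varphi)}(\sigma)\equiv{\rm Art}^{-1}(\sigma)\pmod{K^\times}$, since the discrepancy could a priori be a nontrivial element of $\Gamma$. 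The paper removes this ambiguity by observing that the same comparison holds for \emph{every} open subgroup $\Gamma$ (the moduli interpretation and Theorem \ref{TeoGalact} are available at every level, with the same $\rho$), so the indeterminacy is confined to $\bigcap_\Gamma(\Gamma\cap\A_{K,f}^\times)K^\times=K^\times$, using that $\cO_K^\times$ is finite. You need this (or an equivalent) shrinking argument.

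The substantial gap is in (iv). Your computation via Lemma \ref{comm} is essentially sound (although the differential of $\mu_{\sigma_c}$ is $\C$-linear, like that of any isogeny of complex abelian varieties; what is conjugated is the scalar through which $k^{\sigma_c}$ acts on ${\rm Lie}(A^{\sigma_c})$), and it does show that conjugation by $\rho^\mu_{(A,\imath,\varphi)}(\sigma_c)$ induces complex conjugation on $K$, hence that $\rho^\mu_{(A,\imath,\varphi)}(\sigma_c)\in j\A_{K,f}^\times$. But that is only membership in the nontrivial coset of $N_A$ modulo $\A_{K,f}^\times$, whereas the proposition asserts the exact value $\rho^N_{(A,\imath,\varphi)}(\sigma_c)=jK^\times$ with $j$ a \emph{rational} quaternion: $jK^\times$ is one specific element of $N_A/K^\times$, not ``the nontrivial coset of $N_A/K^\times$'', and the classes $jaK^\times$ with $a\in\A_{K,f}^\times$, $a\notin K^\times$, are all distinct from it. Your argument leaves this $\A_{K,f}^\times$-component of $\rho^N_{(A,\imath,\varphi)}(\sigma_c)$ completely undetermined, and it cannot be recovered from $\sigma_c^2=1$ alone (that only forces $a\bar a\in\Q^\times$ up to $K^\times$). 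The paper pins it down by an explicit uniformization computation: complex conjugation corresponds to the twist $g_\infty\mapsto g_\infty j$, so $A^{\sigma_c}=(B\otimes\R)_{g_\infty}/\cO j$, and one can choose $\mu_{\sigma_c}$ (with $j^{-1}\in\cO$) so that $\varphi(\mu_{\sigma_c}(P^{\sigma_c}))=\varphi(P)j$ on the nose, giving $\rho_{(A,\imath,\varphi)}(\sigma_c)=jK^\times$. Some explicit identification of this kind is needed; your sketch stops short of it.
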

\begin{proof}
Let us consider first the open subgroup $\Gamma=\hat\cO^\times$.
We showed in \S \ref{sec5} that abelian surfaces with QM by $\cO$ over $\C$ are classified by the non-cuspidal points of $X_{\hat\cO^\times}$. Assume that $(A,\imath)$ with CM by $K$ corresponds to $[g_\infty,g_f]\in(\Gamma_\infty \hat\cO^\times\backslash G(\A))\slash G(\Q)\subseteq X_{\hat\cO^\times}(\C)$. The natural composition
\[
K^\times=\End^0(A,\imath)^\times=\{\gamma\in G(\Q):g_\infty\gamma g_\infty^{-1}\in\Gamma_\infty\}\hookrightarrow G(\Q)=B^\times,
\]
provides an embedding $\psi_{(A,\imath)}:K\hookrightarrow B$.
Given another $(A',\imath')$ with CM by K corresponding to $[g_\infty',g_f']$, we obtain an analogous embedding $\psi_{(A',\imath')}:K\hookrightarrow B$. By Skolem-Noether, there exists $g\in G(\Q)$, such that $\psi_{(A',\imath')}=g^{-1}\psi_{(A,\imath)}g$. This implies that $g_\infty'=g_\infty g$, hence $[g_\infty',g_f']=[g_\infty,g_f'g^{-1}]$. We conclude that $(A',\imath')$ is isogenous to $(A,\imath)$ by Proposition \ref{propisoclas}, thus part $(i)$ follows.

By Shimura's Reciprocity Law \cite[Main Theorem I]{Shi}, all points in $[P]$ are defined over $K^{ab}$. In particular $(A,\imath)$ is defined over $\bar\Q$. Since $(A^\sigma,\imath^\sigma)$ has CM by $K$ for any $\sigma\in\Gal(\bar\Q/\Q)$, we apply part $(i)$ to deduce that $(A,\imath)$ and $(A^\sigma,\imath^\sigma)$ are $\cO$-isogenous. We have proved part $(ii)$.

Additionally, Shimura's Reciprocity Law also describes the Galois action on the isogeny class of $(A,\imath)$. More precisely, for any open subgroup $\Gamma$, let $[P']$ be the isogeny class of $P'=[g_\infty, 1]\in X_\Gamma(K^{ab})$ corresponding to $(A,\imath,\bar\varphi)$ for some $\Gamma$-equivalence class $\bar\varphi$. Then Shimura's Reciprocity Law asserts that $\psi_{g_\infty}([ g_f])^\sigma=\psi_{g_\infty}([g_f{\rm Art}^{-1}(\sigma)])$, for all $\sigma\in\Gal(K^{ab}/K)$. We apply Theorem \ref{TeoGalact} and we obtain that $\rho_{(A,\imath,\varphi)}(\sigma)={\rm Art}^{-1}(\sigma\mid_{K^{ab}})$ modulo $\Gamma\cap \A_{K,f}^\times$, for any open subset $\Gamma$. Thus part $(iii)$ follows.

Finally, note first that the class $jK^\times$ is an element in $N_A/K^\times$ of order 2 and $N_A/K^\times=A_{K,f}^\times/K^\times\rtimes\langle jK^\times\rangle$. Let $\sigma_c\in\Aut(\C)$ denote complex conjugation automorphism. Recall that $A=(B\otimes\R)_{g_\infty}/\cO$, where the complex structure on $(B\otimes\R)_{g_\infty}$ is given by $h_{g_\infty}:\C\hookrightarrow\M_2(\R)$. It is easy to see that $h_{g_\infty}$ is the $\R$-extension of scalars of $\psi_{(A,\imath)}:K\hookrightarrow B$. Complex conjugation must be the unique automorphism $\gamma$ on $(B\otimes\R)$ such that $\gamma(h_{g_\infty}(z))=h_{g_\infty}(z^{\sigma_c})$, for all $z\in\C^\times$. Therefore $\gamma$ corresponds to conjugation by $j$ since $j^{-1}\psi_{(A,\imath)}(k)j=\psi_{(A,\imath)}(k^{\sigma_c})$, for all $k\in K^\times$. This implies that $A^{\sigma_c}=(B\otimes\R)_{g_\infty j}/\cO=(B\otimes\R)_{g_\infty }/\cO j$ and we have the following diagram (choosing $j^{-1}\in\cO$):
\[
\xymatrix{
A_{tor}\ar[d]^{\varphi}\ar[rr]_{P\mapsto P^{\sigma_c}}&&A_{tor}^{\sigma_c}\ar[rr]_{\simeq}\ar[d]&&A_{tor}^{\sigma_c}\ar[d]\ar[rr]_{\mu_{\sigma_c}}&& A_{tor}\ar[d]^{\varphi}\\
B/\cO\ar[rr]_{b\mapsto j^{-1} b j}&&B/j^{-1}\cO j \ar[rr]_{b\mapsto jb}&&B/\cO j \ar[rr]_{b\mapsto b}&&B/\cO
}
\]
Thus $\varphi(\mu_{\sigma_c}(P^{\sigma_c}))=\varphi(P)j$, which implies $\rho_{(A,\imath)}(\sigma_c)=jK^\times$. Since $\rho^N_{(A,\imath,\varphi)}$ maps exhaustively $\Gal(\bar \Q/K)$ to $A_{K,f}^\times/K^\times$ by $(iii)$ and $\Gal(\bar\Q/\Q)/\Gal(\bar \Q/K)\simeq\Gal(K/\Q)$ is generated by the image of $\sigma_c$, part $(iv)$ follows.
\end{proof}

\section{Abelian $\Q$-surfaces as factors of abelian varieties of $\GL_2$-type}\label{sec8}

In this section we will deal with abelian $\Q$-surfaces without complex multiplication. 
Thus, let $(A,\imath)$ be an abelian $\Q$-surface with QM by $\cO$ such that $\End^0(A,\imath)=\Q$. We fix a set of $\cO$-isogenies $\mu=\{\mu_\sigma:(A^\sigma,\imath^\sigma)\rightarrow(A,\imath),\;\sigma\in\Gal(\bar\Q/\Q)\}$. Given $\mu$, we can define the map:
\[
c_\mu:\Gal(\bar\Q/\Q)\times\Gal(\bar\Q/\Q)\longrightarrow\End^0(A,\imath)^\times=\Q^\times;\quad c_\mu(\sigma,\tau)=\frac{1}{\deg(\mu_{\sigma\tau})}\mu_\sigma\mu_\tau^\sigma\hat\mu_{\sigma\tau}.
\]
\begin{proposition}\label{propcocycle}
The map $c_\mu$ is a two cocycle in $Z^2(\Gal(\bar\Q/\Q),\Q^\times)$. Its class in $H^2(\Gal(\bar\Q/\Q),\Q^\times)$ does not depend on the choice of $\mu$ or the choice of $(A,\imath)$ in a fixed $\cO$-isogeny class. The class of $c_\mu$ in $H^2(\Gal(\bar\Q/\Q),\bar\Q^\times)$ coincides with the class of $B$ in $\Br(\Q)$, once we identify $\Br(\Q)\simeq H^2(\Gal(\bar\Q/\Q),\bar\Q^\times)$.
\end{proposition}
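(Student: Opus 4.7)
My plan is to treat the three claims in order, with the identification with the Brauer class being the substantive step.

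For the cocycle relation (and the very fact that $c_\mu$ takes values in $\Q^\times$): each $\mu_\sigma$ is $\cO$-equivariant, so the composition $\mu_\sigma \mu_\tau^\sigma \hat\mu_{\sigma\tau}$ commutes with $\imath(\cO)$ and therefore $c_\mu(\sigma,\tau)\in \End^0(A,\imath)^\times = \Q^\times$ by the non-CM hypothesis. To obtain the cocycle identity I expand $\mu_\sigma \mu_\tau^\sigma \mu_\rho^{\sigma\tau}\colon A^{\sigma\tau\rho}\to A$ in two ways using the defining identity $\mu_\sigma\mu_\tau^\sigma = c_\mu(\sigma,\tau)\mu_{\sigma\tau}$ (and its Galois conjugates). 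One grouping produces $c_\mu(\sigma,\tau)\,c_\mu(\sigma\tau,\rho)\,\mu_{\sigma\tau\rho}$; the other, via $\mu_\sigma(\mu_\tau\mu_\rho^\tau)^\sigma$ together with the fact that $c_\mu(\tau,\rho)\in\Q$ is Galois-fixed and central, produces $c_\mu(\tau,\rho)\,c_\mu(\sigma,\tau\rho)\,\mu_{\sigma\tau\rho}$. Equating yields the $2$-cocycle relation.

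For independence of $\mu$: another admissible family $(\mu'_\sigma)$ differs by $\mu'_\sigma = \lambda_\sigma\mu_\sigma$ with $\lambda_\sigma\in\End^0(A,\imath)^\times=\Q^\times$, and a direct substitution (using $\lambda_\tau^\sigma=\lambda_\tau$) yields $c_{\mu'} = c_\mu\cdot\partial\lambda$. For independence under $\cO$-isogeny: replacing $(A,\imath)$ by $(A',\imath')$ via $\phi\colon A'\to A$ and setting $\mu'_\sigma := \phi^{-1}\mu_\sigma\phi^\sigma$ in $\End^0$ gives $c_{\mu'}(\sigma,\tau) = \phi^{-1}c_\mu(\sigma,\tau)\phi = c_\mu(\sigma,\tau)$ since $c_\mu(\sigma,\tau)\in\Q$ is central.

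For the identification with $[B]$ my plan is Galois descent on the rational homology $V:=H_1(A(\C),\Q)$, a free left $B$-module of rank $1$ (equivalently, one can work with $V_\ell(A)\otimes_{\Q_\ell}\bar\Q_\ell$ for a fixed prime $\ell$). Pick any $B\otimes\bar\Q$-module isomorphism $\psi\colon V\otimes_\Q\bar\Q\stackrel{\simeq}{\to} B\otimes_\Q\bar\Q$ (left regular module on the right). For each $\sigma\in\Gal(\bar\Q/\Q)$ the isogeny $\mu_\sigma$ induces a $B$-linear map $V^\sigma\otimes\bar\Q\to V\otimes\bar\Q$; transporting through $\psi$ and through the canonical Galois-semilinear identification of $V^\sigma\otimes\bar\Q$ with $V\otimes\bar\Q$ yields a $B\otimes\bar\Q$-linear self-map of $B\otimes\bar\Q$, necessarily right-multiplication by a unique element $g_\sigma\in(B\otimes\bar\Q)^\times$. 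A direct unwinding of definitions shows $g_\sigma g_\tau^\sigma g_{\sigma\tau}^{-1} = c_\mu(\sigma,\tau)$ inside $\bar\Q^\times\subset (B\otimes\bar\Q)^\times$. On the other hand, the inner automorphism by $g_\sigma$ of $B\otimes\bar\Q$ is exactly the discrepancy between the natural $\sigma$-action and the one obtained by transporting through $\psi$, so after choosing any $\bar\Q$-algebra isomorphism $B\otimes\bar\Q\cong\M_2(\bar\Q)$ the $1$-cochain $(g_\sigma)$ is precisely (up to a coboundary of $\bar\Q^\times$) the Skolem--Noether cochain whose coboundary represents $[B]$ in $\Br(\Q)=H^2(\Gal(\bar\Q/\Q),\bar\Q^\times)$.

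The principal obstacle is this last paragraph: keeping the Galois-semilinearity bookkeeping tight enough to guarantee that the cocycle read off from $(\mu_\sigma)$ coincides on the nose with the Skolem--Noether cocycle for $[B]$, rather than differing by a subtler twist coming from the identification $V\otimes\bar\Q\cong B\otimes\bar\Q$. The non-CM assumption $\End^0(A,\imath)=\Q$ is essential throughout, as it is what forces every ambiguity to land in the center $\Q$ (resp.\ $\bar\Q$) rather than in a larger subalgebra of $B$.
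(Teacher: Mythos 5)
Your first two steps are correct and essentially the paper's own argument with the details filled in: the values of $c_\mu$ land in $\End^0(A,\imath)^\times=\Q^\times$ because the $\mu_\sigma$ are $\cO$-isogenies and \S\ref{sec8} assumes the non-CM hypothesis, the two groupings of $\mu_\sigma\mu_\tau^\sigma\mu_\rho^{\sigma\tau}$ give the cocycle identity (which the paper leaves to the reader), and your two-step comparison (first change $\mu$, then transport along a fixed $\cO$-isogeny $\phi$, using centrality of rational scalars) amounts to the paper's single explicit coboundary $\lambda(\sigma)=\frac{1}{\deg(\mu'_\sigma)\deg(\phi)}\mu_\sigma\circ\hat\phi^\sigma\circ\hat\mu'_\sigma\circ\phi$.

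The third step, however, has a genuine gap, and it sits exactly where you yourself flag ``the principal obstacle''. First, the ``canonical Galois-semilinear identification of $V^\sigma\otimes\bar\Q$ with $V\otimes\bar\Q$'' does not exist for $V=H_1(A(\C),\Q)$: Betti homology of a variety over $\bar\Q$ has no Galois functoriality at all. For the $\ell$-adic substitute the canonical comparison $V_\ell(A)\to V_\ell(A^\sigma)$, $P\mapsto P^\sigma$, is $\Q_\ell$-linear rather than $\sigma$-semilinear, so unwinding the definitions produces the \emph{untwisted} relation $g_\sigma g_\tau=c_\mu(\sigma,\tau)g_{\sigma\tau}$ in $(B\otimes\A_f)^\times$ --- precisely the computation of Lemma \ref{comm} defining $\rho^\mu_{(A,\imath,\varphi)}$ --- and not your identity $g_\sigma g_\tau^\sigma g_{\sigma\tau}^{-1}=c_\mu(\sigma,\tau)$. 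A relation with trivial coefficients cannot identify the class of $c_\mu$ in $H^2(\Gal(\bar\Q/\Q),\bar\Q^\times)$ for the \emph{natural} action, which is the group identified with $\Br(\Q)$ (for the trivial action the image would even vanish, by the theorem of Tate quoted in \S\ref{sec8}). Second, even if you repair this by working with a realization that does carry a canonical $\sigma$-semilinear comparison (e.g. algebraic de Rham cohomology over $\bar\Q$), the cochain $u_\sigma\in(B\otimes\bar\Q)^\times$ you obtain satisfies ${}^\sigma u_\tau\cdot u_\sigma=c_\mu(\sigma,\tau)\,u_{\sigma\tau}$ with ${}^\sigma(\cdot)$ the action through the tensor factor; after a splitting $B\otimes\bar\Q\simeq\M_2(\bar\Q)$ this action differs from the entrywise one by conjugation by the Skolem--Noether cochain $m_\sigma$ representing $[B]$, and carrying out the bookkeeping carefully yields only $[c_\mu]=[B]\cdot[\cD]^{\pm1}$, where $\cD$ is the quaternion algebra over $\Q$ obtained by Galois descent of the commutant $\End_{B\otimes\bar\Q}(V\otimes\bar\Q)\simeq\M_2(\bar\Q)$ along the semilinear maps $f\mapsto u_\sigma^{-1}\,{}^\sigma\! f\,u_\sigma$. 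Your assertion that $(g_\sigma)$ is ``precisely, up to a coboundary of $\bar\Q^\times$, the Skolem--Noether cochain'' is exactly the unproved claim that $[\cD]$ is trivial, and establishing that is the actual content of the assertion, not bookkeeping. The paper does not attempt a direct argument here: it invokes Pyle's Theorem 2.1, whose proof goes through the restriction of scalars $\Res_{\Q} A$ and the structure of its $\Q$-rational endomorphism algebra acting on rational homology (the machinery this paper develops in \S\ref{sec8}). As it stands, your sketch does not prove the Brauer-class statement.
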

\begin{proof}
To prove that $c_\mu$ is a 2-cocycle one has to check that
\[
c_\mu(\sigma_2,\sigma_3)^{\sigma_1}c_\mu(\sigma_1,\sigma_2\sigma_3)=c_\mu(\sigma_1,\sigma_2)c_\mu(\sigma_1\sigma_2,\sigma_3), \quad\sigma_1,\sigma_2,\sigma_3\in\Gal(\bar\Q/\Q).
\]
We leave the details to the reader. If we choose $(A',\imath')$ in the $\cO$-isogeny class of $(A,\imath)$ with set of $\cO$-isogenies $\mu'=\{\mu_\sigma':((A')^\sigma,(\imath')^\sigma)\rightarrow(A',\imath'),\;\sigma\in\Gal(\bar\Q/\Q)\}$, we obtain that $c_\mu=c_{\mu'}\partial(\lambda)$, where 
\[
\lambda:\Gal(\bar\Q/\Q)\longrightarrow\Q^\times=\End^0(A,\imath)^\times;\quad
\lambda(\sigma)=\frac{1}{\deg(\mu_{\sigma}')\deg(\phi)}\mu_\sigma\circ\hat\phi^\sigma\circ\hat\mu'_{\sigma}\circ\phi,
\] 
for any $\cO$-isogeny $\phi:(A,\imath)\rightarrow(A',\imath')$.
Finally, the last assertion follows from \cite{Pyle} Theorem 2.1.
\end{proof}

Let us denote by $Q^\times$ the group $\bar\Q^\times$ with trivial Galois action. By a theorem due to Tate, \cite[Theorem 6.3]{Rib}, the cohomology group $H^2(\Gal(\bar\Q/\Q),Q^\times)$ is trivial. This implies that $c_\mu=\partial(\alpha)$, for some $\alpha:\Gal(\bar\Q/\Q)\rightarrow Q^\times$. Let $E_\alpha$ be the number field generated by the image of $\alpha$.

By means of the set of $\cO$-isogenies $\mu=\{\mu_\sigma;\;\sigma\in\Gal(\bar\Q/\Q)\}$ and a 1-cocycle $\alpha:\Gal(\bar\Q/\Q)\rightarrow Q^\times$ such that $c_\mu=\partial(\alpha)$,
we shall construct an abelian variety of $\GL_2$-type $A_{\GL_2}$ having $A$ as a factor or being contained in $A$. 

\begin{remark}\label{Remunoenotro}
In the particular case that $B=\M_2(\Q)$ and $A\simeq E\times E'$, where $E$ is an elliptic curve defined over $\Q$, we may obtain that $A_{\GL_2}=E$. Otherwise, we will obtain that $A$ is a factor of $A_{\GL_2}$.
\end{remark}

Choose a model over a Galois extension $M/\Q$ such that the $\cO$-isogenies $\mu_\sigma:(A^\sigma,\imath^\sigma)\rightarrow (A,\imath)$, with $\sigma\in\Gal(M/\Q)$, are also defined over $M$.
We consider the Weil restriction of scalars $X=\Res_\Q A$. It is an abelian variety defined over $\Q$ isomorphic over $\bar\Q$ to $\prod_{\sigma\in\Gal(M/\Q)} A^\sigma$. For any $\tau\in\Gal(M/\Q)$ and $o\in\cO$, the isogeny $\mu_\tau$ gives rise to an endomorphism $o\lambda_\tau\in\End(X)$ defined by:
\[
o\lambda_\tau:\prod_\sigma A^\sigma\longrightarrow\prod_\sigma A^\sigma,\quad(P_\sigma)_\sigma\longmapsto ((\imath(o)\mu_\tau)^{\sigma}(P_{\sigma\tau}))_{\sigma}.
\]
Since the Galois action on the points of $X$ is given by 
\[\begin{array}{ccc}
\prod_{\sigma\in\Gal(M/\Q)} A^\sigma&\stackrel{\gamma\in\Gal(\bar\Q/\Q)}{\longrightarrow}&\prod_{\sigma\in\Gal(M/\Q)} A^\sigma\\
(P_\sigma)_\sigma&\longmapsto&(P_\sigma^{\gamma})_{\pi(\gamma)\sigma},
\end{array}\]
where $\pi:\Gal(\bar\Q/\Q)\rightarrow\Gal(M/\Q)$ is the natural projection. We compute that, for $\tau\in\Gal(M/\Q)$, $\gamma\in\Gal(\bar\Q/\Q)$ and $o\in\cO$, 
\begin{eqnarray*}
o\lambda_\tau((P_\sigma)_\sigma^\gamma)&=&o\lambda_\tau((P_\sigma^{\gamma})_{\pi(\gamma)\sigma})=((\imath(o)\mu_{\tau})^{\pi(\gamma)\sigma}(P_{\sigma\tau}^\gamma))_{\pi(\gamma)\sigma}\\
&=&((\imath(o)\mu_\tau)^{\sigma}(P_{\sigma\tau}))_{\sigma}^{\gamma}=o\lambda_\tau((P_\sigma)_\sigma)^\gamma.
\end{eqnarray*}
Hence $o\lambda_\tau\in\End_\Q(X)$.

\begin{lemma}
We have that $\End^0_\Q(X)=\prod_{\sigma\in\Gal(M/\Q)}B\lambda_\tau$.
\end{lemma}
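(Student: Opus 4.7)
The plan is to use the universal property of the Weil restriction $X=\Res_\Q A$. Applying the defining adjunction $\Hom_\Q(Y,X)=\Hom_M(Y_M,A)$ with $Y=X$, together with the $M$-isomorphism $X_M\simeq\prod_{\sigma\in\Gal(M/\Q)}A^\sigma$ and the fact that finite products coincide with coproducts in the category of abelian varieties, one obtains
\[
\End^0_\Q(X)\;=\;\Hom^0_M\bigl(\textstyle\prod_{\sigma}A^\sigma,\;A\bigr)\;=\;\bigoplus_{\sigma\in\Gal(M/\Q)}\Hom^0_M(A^\sigma,A).
\]
This reduces the problem to identifying each summand $\Hom^0_M(A^\sigma,A)$ with $B\mu_\sigma$ and then matching these summands with the $B\lambda_\tau$ appearing in the statement.

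For the first identification, the inclusion $B\mu_\sigma\subseteq\Hom^0_M(A^\sigma,A)$ is immediate since both $\imath$ and $\mu_\sigma$ are defined over $M$. For the reverse, given $f\in\Hom^0_M(A^\sigma,A)$, I would form $f\circ\mu_\sigma^\vee/\deg(\mu_\sigma)\in\End^0_M(A)$; in the non-CM case $\End^0_M(A)=\End^0(A)=\imath(B)$, so this composition equals $\imath(b)$ for a unique $b\in B$, yielding $f=\imath(b)\mu_\sigma$. To match with the $\lambda_\tau$, note that under the adjunction an endomorphism $\Phi\in\End^0_\Q(X)$ corresponds to the tuple $(\mathrm{pr}_1\circ\Phi\vert_{A^\sigma})_{\sigma}$, where $\mathrm{pr}_1:X_M\to A$ projects onto the factor $\sigma=1$. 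A direct reading of $\lambda_\tau((P_\sigma)_\sigma)=(\mu_\tau^\sigma(P_{\sigma\tau}))_\sigma$ shows that $\mathrm{pr}_1\circ\lambda_\tau\vert_{A^\sigma}=\mu_\tau$ when $\sigma=\tau$ and $0$ otherwise, hence $\imath(b)\lambda_\tau$ corresponds to $\imath(b)\mu_\tau$ in the $\tau$-summand and vanishes elsewhere. Assembling over $\tau$ gives the desired equality $\End^0_\Q(X)=\bigoplus_{\tau\in\Gal(M/\Q)}B\lambda_\tau$.

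I expect the main obstacle to be justifying $\End^0_M(A)=\imath(B)$: this requires both that the quaternionic multiplication is defined over $M$ (giving $\imath(B)\subseteq\End^0_M(A)$) and that $\End^0(A)=\imath(B)$ in the non-CM case, which follows from $\End^0(A,\imath)=\Q$ via a centralizer computation inside $\End_\Q(H_1(A,\Q))$ — concretely, the centralizer of $\imath(B)$ in $\End^0(A)$ is $\Q$, forcing $\End^0(A)$ to coincide with $\imath(B)$ by the double centralizer theorem. Everything else reduces to formal manipulation of the universal property and the explicit formula for $\lambda_\tau$.
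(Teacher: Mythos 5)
Your argument is correct and follows essentially the same route as the paper: both apply the universal property of $\Res_\Q A$ with $Y=X$ to get $\End^0_\Q(X)=\prod_\sigma\Hom^0_M(A^\sigma,A)$ and then identify each factor $\Hom^0_M(A^\sigma,A)=\imath(B)\mu_\sigma$ with $B\lambda_\sigma$. The only difference is that you spell out the identification $\Hom^0_M(A^\sigma,A)=\imath(B)\mu_\sigma$ (via $f\mapsto f\circ\mu_\sigma^\vee/\deg\mu_\sigma$ and the fact that $\End^0(A)=\imath(B)$ when $\End^0(A,\imath)=\Q$, which indeed follows from the tensor-decomposition form of the centralizer theorem for the central simple subalgebra $\imath(B)$), a step the paper simply asserts.
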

\begin{proof}
The abelian variety $X$ represents Weil's restriction of scalars functor $\Res_\Q(A)$, defined as the functor that maps a $\Q$-scheme $S$ to $A(S_M)$, where $S_M=S\otimes_\Q M$ is the $M$-scheme obtained from $S$ through extension of scalars. In the particular situation $S=X$, we obtain, 
\[
X(X)\otimes\Q=\End^0_\Q(X)=\Hom^0_M(X_M,A)=\prod_{\sigma\in\Gal(M/\Q)}\Hom^0_M(A^\sigma,A).
\]
Hence the result follows from the fact that $\Hom^0_M(A^\sigma,A)=\imath(B)\mu_\sigma$, which clearly maps to $B\lambda_\sigma$ under the above isomorphism. 
\end{proof}

\begin{remark}\label{remcocylambda}
Note that $(o\lambda_\tau)(o'\lambda_{\tau'})=c_\mu(\tau,\tau')(oo'\lambda_{\tau\tau'})$ for any $\tau,\tau'\in\Gal(M/\Q)$ and $o,o'\in\cO$. Indeed,
\begin{eqnarray*}
(o\lambda_\tau) (o'\lambda_{\tau'})((P_\sigma)_\sigma)&=&o\lambda_\tau(((\imath(o')\mu_{\tau'})^{\sigma}(P_{\sigma\tau'}))_{\sigma})\\
&=&((\imath(o)\mu_\tau)^{\sigma}((\imath(o')\mu_{\tau'})^{\sigma\tau}(P_{\sigma\tau\tau'})))_{\sigma}\\
&=&((\imath(o)\circ\mu_\tau\circ\imath^{\tau}(o')\circ\mu_{\tau'}^{\tau})^{\sigma}(P_{\sigma\tau\tau'}))_{\sigma}\\
&=&c_\mu(\tau,\tau')((\imath(oo')\mu_{\tau\tau'})^{\sigma}(P_{\sigma\tau\tau'}))_{\sigma}\\
&=&c_\mu(\tau,\tau')(oo'\lambda_{\tau\tau'})((P_\sigma)_\sigma).
\end{eqnarray*}
\end{remark}

Given the 2-cocycle $c_\mu$ associated to the set of $\cO$-isogenies $\mu=\{\mu_\sigma,\;\sigma\in\Gal(M/\Q)\}$ and the fixed $1$-cocycle $\alpha:\Gal(M/\Q)\rightarrow E_\alpha^\times$ such that $c_\mu=\partial(\alpha)$, we claim that there is a ring homomorphism 
\[
\psi:\End_\Q^0(X)\longrightarrow B\otimes_\Q E_\alpha,\quad b\lambda_\sigma\longmapsto b\otimes\alpha(\sigma).
\]
Indeed, for all $\sigma,\tau\in\Gal(M/\Q)$ and $b,b'\in B$,
\begin{eqnarray*}
\psi((b\lambda_\sigma)(b'\lambda_\tau))&=&\psi(c_\mu(\sigma,\tau)(bb'\lambda_{\sigma\tau}))=bb'\otimes c_\mu(\sigma,\tau)\alpha(\sigma\tau)\\
&=&(b\otimes\alpha(\sigma))(b'\otimes\alpha(\tau))=\psi(b\lambda_\sigma)\psi(b'\lambda_\tau),
\end{eqnarray*}
by Remark \ref{remcocylambda}.

Clearly the map $\psi$ is surjective by definition. Let us consider the abelian subvariety $X_0/\Q$ defined by $X\supseteq X_0:=\{P\in X:\lambda( P)=0, \;\mbox{for all}\;\lambda\in N\}$, where $N$ is the subgroup of endomorphisms $N=\ker(\psi)\cap\End_\Q(X)$. By construction $\End^0_\Q(X_0)=\End^0_\Q(X)/\ker(\psi)=B\otimes_\Q E_\alpha$. Since $\dim(X)=2[M:\Q]$ and $\dim_\Q(\End^0_\Q(X))=4[M:\Q]=2\dim(X)$, we deduce 
\[
4[E_\alpha:\Q]=\dim_\Q(\End^0_\Q(X_0))=2\dim(X_0).
\]
\begin{lemma}
We have that $B\otimes_\Q E_\alpha=\M_2(E_\alpha)$.
\end{lemma}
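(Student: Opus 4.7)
The plan is to deduce the splitting of $B$ over $E_\alpha$ from the class identification in Proposition \ref{propcocycle}, by comparing the trivial Galois action on $Q^\times$ with the natural action on $\bar\Q^\times$.

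First I would translate the statement into Brauer group language: showing $B\otimes_\Q E_\alpha\simeq\M_2(E_\alpha)$ is equivalent to showing that the class of $B$ in $\Br(\Q)$ dies under restriction to $\Br(E_\alpha)$. Via the canonical identification $\Br(F)\simeq H^2(\Gal(\bar\Q/F),\bar\Q^\times)$, and using Proposition \ref{propcocycle} which identifies $[c_\mu]$ with $[B]$ in $H^2(\Gal(\bar\Q/\Q),\bar\Q^\times)$, it suffices to show that the restriction of the cocycle $c_\mu$ to $\Gal(\bar\Q/E_\alpha)$ is a coboundary in $H^2(\Gal(\bar\Q/E_\alpha),\bar\Q^\times)$, with the usual Galois action on $\bar\Q^\times$.

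Now I would exploit the hypothesis $c_\mu=\partial(\alpha)$ in the cohomology with trivial action, i.e.\ with coefficients in $Q^\times$. The subtle point is that a priori $\partial\alpha$ computed with trivial action differs from $\partial\alpha$ computed with the natural action, since the formulas are
\[
(\partial_{\mathrm{triv}}\alpha)(\sigma,\tau)=\alpha(\sigma)\alpha(\tau)\alpha(\sigma\tau)^{-1},\qquad (\partial\alpha)(\sigma,\tau)=\alpha(\sigma)\,\sigma(\alpha(\tau))\,\alpha(\sigma\tau)^{-1}.
\]
However, when $\sigma$ is restricted to $\Gal(\bar\Q/E_\alpha)$, the element $\alpha(\tau)\in E_\alpha^\times$ is pointwise fixed by $\sigma$, so $\sigma(\alpha(\tau))=\alpha(\tau)$ and the two coboundary formulas coincide. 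Consequently the restriction $c_\mu|_{\Gal(\bar\Q/E_\alpha)\times\Gal(\bar\Q/E_\alpha)}$ equals $\partial(\alpha|_{\Gal(\bar\Q/E_\alpha)})$ with the natural $\Gal(\bar\Q/E_\alpha)$-action, hence it is a coboundary.

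This forces the class of $B$ to vanish in $\Br(E_\alpha)$, so $B\otimes_\Q E_\alpha$ is split, and being a four dimensional central simple $E_\alpha$-algebra it is isomorphic to $\M_2(E_\alpha)$. The main conceptual obstacle is precisely the action switch between $Q^\times$ and $\bar\Q^\times$; once one observes that $\alpha$ takes values in the fixed field $E_\alpha$, the two actions agree on the relevant image and the argument becomes routine.
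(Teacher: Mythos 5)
Your proof is correct and takes essentially the same route as the paper: identify $[c_\mu]$ with $[B]$ in $\Br(\Q)$ via Proposition \ref{propcocycle}, observe that its restriction to $\Gal(\bar\Q/E_\alpha)$ becomes a coboundary, hence $[B\otimes_\Q E_\alpha]$ is trivial in $\Br(E_\alpha)$, and conclude by dimension count that the split four-dimensional central simple algebra is $\M_2(E_\alpha)$. The only difference is that the paper delegates the trivial-action versus natural-action point to the citation of Pyle, whereas you spell out explicitly that $\alpha$ takes values in $E_\alpha^\times$, which is fixed pointwise by $\Gal(\bar\Q/E_\alpha)$, so the two coboundary formulas agree on restriction.
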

\begin{proof}
By Proposition \ref{propcocycle}, the 2-cocycle $c_\mu$ represents the class of $B$ in the Brauer group $\Br(\Q)$. Since $c_\mu$ is a coboundary when extended to $E_\alpha$, it represents the trivial element of the Brauer group $\Br(E_\alpha)$. The fact that the cocycle representing $B$ is trivial when extended to $E_\alpha$ implies the assertion (see \cite[\S 2]{Pyle}).
\end{proof}

The above lemma implies that $X_0$ is isogenous over $\Q$ to an abelian surface of the form $A_{\GL_2}^2$, where $\End_\Q^0(A_{\GL_2})=E_\alpha$. Clearly, $\dim(A_{\GL_2})=\dim(X_0)/2=[E_\alpha:\Q]$, thus $A_{\GL_2}$ is an abelian surface of $\GL_2$-type. More precisely, let $\pi$ be the an element of the set
\[
\pi\in\End_\Q(X_0)\cap\left\{\left(\begin{array}{cc}x&0\\0&0\end{array}\right)\in B\otimes_\Q E_\alpha=\End^0_\Q(X_0),\;x\in E_\alpha\right\}.
\]
Then $A_{\GL_2}:=\pi(X_0)\subset X_0$.

Since $A_{\GL_2}\subset X_0\subseteq X=\Res_\Q(A)$, we have that $A$ is a factor of $A_{\GL_2}$ (or $A_{\GL_2}$ is a factor of $A$, see Remark \ref{Remunoenotro}). In particular, $\End^0_{\bar\Q}(A_{\GL_2})=\M_{d/2}(B)$, where $d=[E_\alpha:\Q]$ (we write formally $\M_{1/2}(\M_2(\Q))=\Q$ in case $E_\alpha=\Q$ and $B=\M_2(\Q)$).

\subsection{Galois representations attached to abelian varieties of $\GL_2$-type}

Let $A_{\GL_2}/\Q$ be an abelian variety of dimension $d$ of $\GL_2$-type such that $\End^0_{\bar\Q}(A_{\GL_2})=\M_{d/2}(B)$. By definition $\End^0_\Q(A_{\GL_2})=E$, a field extension of $\Q$ of degree $d$. 

First we recall the classical construction of the Galois representation attached to $A_{\GL_2}$:
For any prime $\ell$, the Tate module $T_\ell(A_{\GL_2})=\Hom((A_{\GL_2})_{tor},\Q_\ell/\Z_\ell)$ is a free $\Z_\ell$-module of rank $2d$. Its extension of scalars $V_\ell(A_{\GL_2}):=T_\ell(A_{\GL_2})\otimes\Q$ has a natural action of $\End^0_\Q(A_{\GL_2})=E$, which provides it with a structure of rank 2 $E\otimes\Q_\ell$-module. Given a basis $\varphi_1,\varphi_2\in V_\ell(A_{\GL_2})$ as a $E\otimes\Q_\ell$-module, we obtain a representation
\[
\rho^\ell_{\GL_2}:\Gal(\bar\Q/\Q)\longrightarrow\GL_2(E\otimes\Q_\ell)^{opp},
\]
defined by $(\varphi_1(P^\sigma),\varphi_2(P^\sigma))=(\varphi_1( P),\varphi_2( P))\rho^\ell_{\GL_2}(\sigma)$, for all $P\in(A_{\GL_2})_{tor}$ and $\sigma\in\Gal(\bar\Q/\Q)$.

Note that we can consider the product of subgroups $G(\A_f)E_\alpha^\times$ inside $\prod_\ell\GL_2(E_\alpha\otimes\Q_\ell)^{opp}$, where $E_\alpha$ is embedded in $\prod_\ell\GL_2(E_\alpha\otimes\Q_\ell)^{opp}$ diagonally and $G(\A_f)$ through the monomorphism 
\[
G(\A_f)=(\hat\cO^{opp}\otimes\Q)^\times\hookrightarrow \prod_\ell(B_\ell^{opp}\otimes E_\alpha)^\times\simeq \prod_\ell\GL_2(E_\alpha\otimes\Q_\ell)^{opp}.
\]
The following result relates this representation $\rho^\ell_{\GL_2}$ with the representation $\rho^\mu_{(A,\imath,\varphi)}$ introduced in \S \ref{sec2}:
\begin{theorem}\label{eqrep}
Let $(A,\imath)$ be an abelian $\Q$-surface with QM by $\cO$. Assume that the abelian variety of $GL_2$-type $A_{GL_2}/\Q$ has been constructed by means of $(A,\imath)$, $\mu=\{\mu_\sigma:(A^\sigma,\imath^\sigma)\rightarrow(A,\imath),\;\sigma\in\Gal(\bar\Q/\Q)\}$ and $\alpha:\Gal(\bar\Q/\Q)\rightarrow E_\alpha^\times$. Then, there exist $(E_\alpha\otimes\Q_\ell)$-basis of the Tate modules of $A_{\GL_2}$ at every prime $\ell$ such that the product of the $\ell$-adic representations $\prod_\ell\rho^\ell_{\GL_2}=:\hat\rho_{\GL_2}$ factors through
\[\xymatrix{
\hat\rho_{\GL_2}:\Gal(\bar\Q/\Q)\ar[rr]^{\rho^\mu_{(A,\imath,\varphi)}\alpha^{-1}\qquad}&&G(\A_f)E_\alpha^\times\subset\prod_\ell\GL_2(E_\alpha\otimes\Q_\ell)^{opp}.
}\]
Namely, $\hat\rho_{\GL_2}(\sigma)=\rho^\mu_{(A,\imath,\varphi)}(\sigma)\alpha^{-1}(\sigma)\in G(\A_f)E_\alpha^\times\subseteq\prod_\ell\GL_2(E_\alpha\otimes\Q_\ell)^{opp}$, for all $\sigma\in \Gal(\bar\Q/\Q)$.
\end{theorem}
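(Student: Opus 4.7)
The plan is to analyze the $\ell$-adic Tate module of $X = \Res_\Q A$ and trace how it descends to $V_\ell(A_{\GL_2})$. Since $A_{\GL_2} = \pi(X_0)$ for a rank-one idempotent $\pi \in B \otimes_\Q E_\alpha$ and $X_0 \subseteq X$ is cut out by $\ker\psi$, the essential tasks are: first, describe the Galois action on $V_\ell(X)$ in a basis compatible with $B\{G\} = \End^0_\Q(X)$ (where $G = \Gal(M/\Q)$); second, trivialize the cocycle $c_\mu$ on the quotient by means of $\alpha$; and third, extract the action on $V_\ell(A_{\GL_2})$ via Morita equivalence.

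The key geometric input is the identification $V_\ell(X) \cong V_\ell(A) \otimes_\Q \Q[G]$ obtained from $X \cong \prod_{\sigma\in G} A^\sigma$ over $\bar\Q$ and the $\cO$-isogenies $\mu_\sigma$, which provide $B$-equivariant isomorphisms $\iota_\sigma: V_\ell(A) \xrightarrow{\sim} V_\ell(A^\sigma)$. A direct computation using $\mu_\rho \circ \mu_\tau^\rho = c_\mu(\rho,\tau)\mu_{\rho\tau}$ shows that the right action of $B\{G\}$ on this identification is $(v\otimes e_\rho)\cdot(o\,e_\tau) = c_\mu(\rho,\tau)(v\cdot o)\otimes e_{\rho\tau}$, while using the defining identity $\varphi(\mu_\gamma(P^\gamma)) = \varphi(P)\rho^\mu(\gamma)$ (with the convention $\mu_\gamma = \mu_{\pi(\gamma)}$) one finds a Galois action of the shape
$$
\gamma\cdot(v\otimes e_\rho) \;=\; c_\mu(\tau,\rho)^{-1}\,(v\cdot\rho^\mu(\gamma))\otimes e_{\tau\rho},\qquad \tau = \pi(\gamma).
$$
Since $\partial\alpha = c_\mu$, the change of basis $f_\sigma := \alpha(\sigma)^{-1} e_\sigma$ (after extending scalars to $E_\alpha$) converts $B\{G\}\otimes_\Q E_\alpha$ into the untwisted group ring $(B\otimes_\Q E_\alpha)[G]$ and turns $\psi$ into the augmentation $f_\sigma\mapsto 1$. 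In the $f$-basis the cocycle factor is absorbed and the Galois action simplifies to $\gamma\cdot(v\otimes f_\rho) = (v\cdot\rho^\mu(\gamma)\alpha(\gamma)^{-1})\otimes f_{\tau\rho}$.

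The subspace $V_\ell(X_0)\subseteq V_\ell(X)$ on which $\ker\psi$ acts trivially corresponds precisely to the vectors on which all $f_\sigma$ act as $1$. Via Morita equivalence for $B\otimes_\Q E_\alpha = \M_2(E_\alpha)$, the idempotent $\pi$ then singles out $V_\ell(A_{\GL_2})$ as a rank-$2$ free module over $E_\alpha\otimes\Q_\ell$, and the $\hat\Z$-basis of $\hat\cO$ (cf.\ Lemma \ref{equiTatemod}) composed with the Morita projection determines a distinguished $(E_\alpha\otimes\Q_\ell)$-basis. In this basis the Galois action is precisely $\rho^\mu(\gamma)\alpha^{-1}(\gamma)\in G(\A_f)E_\alpha^\times$, embedded into $\GL_2(E_\alpha\otimes\Q_\ell)^{opp}$ at each prime $\ell$; taking the product over $\ell$ yields the claimed factorization. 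The main obstacle is the careful bookkeeping of cocycles, shuffles, and left/right conventions needed to establish the displayed formulas; once they are in hand, the descent through $\alpha$ and Morita equivalence is essentially formal.
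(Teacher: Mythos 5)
Your outline is the same as the paper's (pass to $X=\Res_\Q A$, untwist $c_\mu$ by $\alpha$, cut down to $X_0$, project by $\pi$ and use $B\otimes_\Q E_\alpha\simeq\M_2(E_\alpha)$), but the displayed formulas that you yourself identify as the crux do not hold together, and the failure sits exactly in the part you defer as ``bookkeeping''. First, your Galois formula and your module formula for the twisted algebra are mutually incompatible: the endomorphisms $o\lambda_\tau$ are defined over $\Q$, so they must commute with the Galois operators, and for your pair this commutation reduces, via the $2$-cocycle identity, to $c_\mu(\pi(\gamma),\rho)^2=c_\mu(\pi(\gamma),\rho\tau)^2$ for all $\rho,\tau$, which is false in general. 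For the natural identification sending $w\in V_\ell(A^\rho)$ to $V_\ell(\mu_\rho)(w)\otimes e_\rho$, your Galois formula is the correct one, but then $o\lambda_\tau$ sends $v\otimes e_\rho$ to $c_\mu(\rho\tau^{-1},\tau)\,(\imath(o)v)\otimes e_{\rho\tau^{-1}}$, not to $c_\mu(\rho,\tau)(v\cdot o)\otimes e_{\rho\tau}$. Second, the asserted simplification after setting $f_\sigma=\alpha(\sigma)^{-1}e_\sigma$ is false on the induced module: with the convention $c_\mu(\tau,\rho)=\alpha(\tau)\alpha(\rho)\alpha(\tau\rho)^{-1}$ (the one that untwists the algebra and turns $\psi$ into the augmentation), your own Galois formula yields $\gamma\cdot(v\otimes f_\rho)=\alpha(\tau\rho)^2\alpha(\tau)^{-1}\alpha(\rho)^{-2}\,\bigl(v\,\rho^\mu_{(A,\imath,\varphi)}(\gamma)\bigr)\otimes f_{\tau\rho}$ --- at $\rho=1$ the factor is $\alpha(\gamma)$, not $\alpha(\gamma)^{-1}$ --- so the $f$-diagonal is not even Galois-stable and cannot be $V_\ell(X_0)$. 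Third, ``vectors on which all $f_\sigma$ act as $1$'' is the subspace killed by the augmentation ideal of $(B\otimes E_\alpha)[G]$, whereas $V_\ell(X_0)$ is cut out by the strictly smaller ideal $\ker\psi$; the two subspaces have different dimensions as soon as $[E_\alpha:\Q]>1$.

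The idea you are missing is that the factor $\alpha(\gamma)^{-1}$ does not come from a change of basis on all of $V_\ell(X)$: it comes from inverting the $\Q$-rational endomorphism $\lambda_{\pi(\gamma)}$, which acts as the scalar $\alpha(\gamma)\in E_\alpha$ only after restriction to $X_0$. That is precisely how the paper argues: it composes $\varphi$ with the projection onto the identity factor to get $\bar\varphi:X_{tor}\to B/\cO$, proves $\bar\varphi(\lambda_{\pi(\gamma)}(Q^\gamma))=\bar\varphi(Q)\rho^\mu_{(A,\imath,\varphi)}(\gamma)$, restricts to $X_0$ to obtain $\bar\varphi_0(\alpha(\gamma)Q^\gamma)=\bar\varphi_0(Q)\rho^\mu_{(A,\imath,\varphi)}(\gamma)$, and only then builds the isomorphism $\varphi_\alpha:\hat\cO\otimes E_\alpha\simeq\hat V(X_0)$ and projects by $\pi$ to produce the $(E_\alpha\otimes\Q_\ell)$-bases. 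Your plan can be repaired along these lines, but as written the step you call ``essentially formal'' is where the proof actually lives, and the displayed computations do not establish it.
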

\begin{proof}
Fix a model $(A,\imath)$ over some Galois extension $M$ as above.
In order to construct $A_{\GL_2}$, we consider the restriction of scalars $X=\Res_\Q(A)$. The $\cO$-module isomorphism $\varphi$ provides a $\cO$-module epimorphism
\[
\bar\varphi:X_{tor}=\prod_{\sigma\in\Gal(M/\Q)}A_{tor}^\sigma\stackrel{pr}{\longrightarrow} A_{tor}\stackrel{\varphi}{\longrightarrow} B/\cO,
\]
where $pr$ is the natural projection.
Let $\pi:\Gal(\bar\Q/\Q)\rightarrow\Gal(M/\Q)$ be the natural quotient morphism, we compute, for all $\gamma\in\Gal(\bar\Q/\Q)$,
\begin{eqnarray*}
\bar\varphi(\lambda_{\pi(\gamma)}(P_\sigma)_\sigma^\gamma)&=&\bar\varphi(\mu_{\pi(\gamma)}^{\pi(\gamma)\sigma}(P_{\sigma\pi(\gamma)}^\gamma)_{\pi(\gamma)\sigma})=\varphi(\mu_{\pi(\gamma)}(P^{\gamma}))\\
&=&\varphi( P)\rho^\mu_{(A,\imath,\varphi)}(\gamma)=\bar\varphi( P_\sigma)_\sigma\rho^\mu_{(A,\imath,\varphi)}(\gamma).
\end{eqnarray*}
Given $\alpha$ and the corresponding subvariety $X_0\subseteq X$, since $A\subseteq X_0$, the restriction of $\bar\varphi$ to $X_0$ gives rise to an epimorphism $\bar\varphi_0:(X_0)_{tor}\rightarrow B/\cO$. Moreover, it satisfies
\begin{equation}\label{eqnthmGal}
\bar\varphi_0(\alpha(\gamma)Q^\gamma)=\bar\varphi_0(Q)\rho^\mu_{(A,\imath,\varphi)}(\gamma),\quad\mbox{for all}\;Q\in(X_0)_{tor}.
\end{equation}
Note that, once that we have chosen a $\hat\Z$-basis for $\hat\cO$, such a map $\bar\varphi_0$ provides 4 linearly independent elements of the Tate module $\hat T(X_0)=\Hom((X_0)_{tor},\Q/\Z)$. More precisely, if $\{e_1,e_2,e_3,e_4\}$ is a $\hat\Z$-basis of $\hat\cO$, there exists $\varphi_i\in\hat T(X_0)$ such that
\[
\bar\varphi_0(Q)=\varphi_1(Q)e_1+\varphi_2(Q)e_2+\varphi_3(Q)e_3+\varphi_4(Q)e_4,
\]
for all $Q\in (X_0)_{tor}$. By Remark \ref{remcocylambda}, the endomorphisms in $E_\alpha$ commute with the endomorphisms in $\imath(\cO)$. Since $\dim_Q(B\otimes_\Q E_\alpha)=2\dim(X_0)$, the elements $\varphi_i\in\hat T(X_0)$ provide an isomorphism
\[
\varphi_\alpha:\hat\cO\otimes E_\alpha\stackrel{\simeq}{\longrightarrow}\hat V(X_0):=\hat T(X_0)\otimes\Q;\qquad (\sum_ix_ie_i)\otimes z\longmapsto(Q\mapsto\sum_i x_i\varphi_i(z Q)),
\]
with $x_i\in\hat\Z$, $z\in E_\alpha$ and $Q\in(X_0)_{tor}$. By \eqref{eqnthmGal}, we have that, for all $Q\in(X_0)_{tor}$, $o\otimes z\in\hat\cO\otimes E_\alpha$ and $\gamma\in\Gal(\bar\Q/\Q)$,
\[
\varphi_\alpha(o\otimes z)(Q^\gamma)=\varphi_\alpha(o\rho^\mu_{(A,\imath,\varphi)}(\gamma)\otimes \alpha^{-1}(\gamma)z)(Q).
\]
Thus the action of $\gamma\in\Gal(\bar\Q/\Q)$ on $\hat V(X_0)\simeq\hat\cO\otimes E_\alpha$ is given by right multiplication by $\rho^{\mu}_{(A,\imath,\varphi)}(\gamma)\alpha^{-1}(\sigma)$.

Finally, we know that $A_{\GL_2}=\pi(X_0)\subset X_0$, where $\pi\in\cO\otimes E_\alpha$. Hence $\varphi_\alpha$ provides an isomorphism $\hat V(A_{\GL_2})_{tor}\simeq\pi(\hat\cO\otimes E_\alpha)$. Since right multiplication by $G(\A_f)E_\alpha$ commutes with left multiplication by $\cO\otimes E_\alpha$, we conclude that the action of $\gamma\in\Gal(\bar\Q/\Q)$ on $\hat V(A_{\GL_2})\simeq\pi(\hat\cO\otimes E_\alpha)$ is also given by right multiplication by $\rho^{(\mu,\alpha)}_{(A,\imath,\varphi)}(\gamma)$. The fixed the isomorphism $\cO_\ell\otimes E_\alpha\simeq \M_2(E_\alpha\otimes\Q_\ell)$ provides a basis of the Tate module $\hat V(A_{\GL_2})\simeq\pi(\cO_\ell\otimes E_\alpha)$ as a rank 2 $(E_\alpha\otimes\Q_\ell)$-module. It is clear that the Galois action on $V_\ell(A_{\GL_2})$ is given by right multiplication by the $\ell$-adic component of $\rho^{\mu}_{(A,\imath,\varphi)}(\gamma)\alpha^{-1}(\sigma)$, once we have identified $G(\Q_\ell)E_\alpha$ inside $\GL_2(\Q_\ell\otimes E_\alpha)^{opp}$. Thus the result follows.
\end{proof}

The following result, which is a direct consequence of the above theorem, is well known by the experts.
\begin{corollary}
Let $\hat\rho_{\GL_2}=\prod_\ell\rho^\ell_{\GL_2}$ be the Galois representation attached to the abelian variety of $\GL_2$-type constructed by means of $(A,\imath)$, $\mu$ and $\alpha$. Then its determinant is given by
\[
\det(\hat\rho_{\GL_2})(\sigma)=\chi(\sigma)\frac{\deg(\mu_\sigma)}{\alpha^{2}(\sigma)},
\]
where $\chi$ is the cyclotomic character.
\end{corollary}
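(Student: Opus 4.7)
The plan is to derive this as a direct consequence of Theorem \ref{eqrep} together with Theorem \ref{normrho}. By Theorem \ref{eqrep}, for every $\sigma\in\Gal(\bar\Q/\Q)$ we have the factorization
\[
\hat\rho_{\GL_2}(\sigma)=\rho^\mu_{(A,\imath,\varphi)}(\sigma)\,\alpha^{-1}(\sigma)\in G(\A_f)E_\alpha^\times\subseteq\prod_\ell\GL_2(E_\alpha\otimes\Q_\ell)^{opp}.
\]
Since the determinant is multiplicative, it suffices to compute separately the determinants of $\rho^\mu_{(A,\imath,\varphi)}(\sigma)$ and of $\alpha^{-1}(\sigma)$ when viewed as matrices in $\GL_2(E_\alpha\otimes\Q_\ell)^{opp}$.

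The first step is to recall that, under the identification
\[
G(\Q_\ell)\hookrightarrow(B_\ell^{opp}\otimes E_\alpha)^\times\simeq\GL_2(E_\alpha\otimes\Q_\ell)^{opp}
\]
used to embed $G(\A_f)$ into $\prod_\ell\GL_2(E_\alpha\otimes\Q_\ell)^{opp}$, the reduced norm of $B$ corresponds to the usual determinant of $2\times 2$ matrices. This is the standard identification of reduced norm with determinant once a splitting of the quaternion algebra is fixed. Consequently, the determinant of $\rho^\mu_{(A,\imath,\varphi)}(\sigma)\in G(\A_f)$ seen in $\prod_\ell\GL_2(E_\alpha\otimes\Q_\ell)^{opp}$ equals ${\rm Norm}(\rho^\mu_{(A,\imath,\varphi)}(\sigma))\in\A_f^\times$.

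The second step is immediate: the scalar $\alpha^{-1}(\sigma)\in E_\alpha^\times$ is embedded diagonally, so it acts at each $\ell$ as the scalar matrix $\alpha^{-1}(\sigma)I_2$, whose determinant is $\alpha^{-2}(\sigma)$. Combining both contributions yields
\[
\det(\hat\rho_{\GL_2})(\sigma)={\rm Norm}\bigl(\rho^\mu_{(A,\imath,\varphi)}(\sigma)\bigr)\,\alpha^{-2}(\sigma).
\]

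The third step applies Theorem \ref{normrho}, which gives ${\rm Norm}(\rho^\mu_{(A,\imath,\varphi)}(\sigma))=\deg(\mu_\sigma)\chi(\sigma)$, and substituting this into the previous identity produces exactly the desired formula. There is no serious obstacle here; the only point that warrants explicit mention is the compatibility between the reduced norm on $G(\A_f)=(\hat\cO^{opp}\otimes\Q)^\times$ and the determinant on $\GL_2(E_\alpha\otimes\Q_\ell)^{opp}$ after base-changing to $E_\alpha$, which is precisely the splitting phenomenon used in the construction of $A_{\GL_2}$ and in the proof of Theorem \ref{eqrep}.
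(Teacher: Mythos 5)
Your proposal is correct and follows essentially the same route as the paper, which likewise deduces the formula by combining the factorization $\hat\rho_{\GL_2}(\sigma)=\rho^\mu_{(A,\imath,\varphi)}(\sigma)\alpha^{-1}(\sigma)$ from Theorem \ref{eqrep} with the norm computation of Theorem \ref{normrho}; you simply spell out the details (reduced norm equals determinant after splitting, the diagonal scalar contributes $\alpha^{-2}(\sigma)$) that the paper leaves implicit. The only point worth adding explicitly is the paper's opening remark that the determinant is independent of the choice of basis of the Tate module, which is what licenses working with the particular basis furnished by Theorem \ref{eqrep}.
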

\begin{proof}
Since the determinant does not depends on the choice of the basis for the Tate module, we choose the basis of Theorem \ref{eqrep}. Then the result follows from Theorem \ref{eqrep} and Theorem \ref{normrho}.
\end{proof}

\begin{remark}
After the proof of Serre's conjecture on representations of $\Gal(\bar\Q/\Q)$ \cite[3.2.4?]{Ser}, $\rho^\ell_{\GL_2}$ is the $\ell$-adic Galois representation associated with a modular newform in $S_1(N,\epsilon)$, with $N\in\N$ and $\epsilon(\sigma)=\frac{\deg(\mu_\sigma)}{\alpha^{2}(\sigma)}$, for all $\sigma\in\Gal(\bar\Q/\Q)$.
\end{remark}

Recall that the representation $\rho_{(A,\imath,\varphi)}:\Gal(\bar\Q/\Q)\rightarrow G(\A_f)/\Q^\times$ introduced in \S \ref{sec2} is the projectivization of $\rho_{(A,\imath,\varphi)}^\mu$ modulo $\Q^\times$. Applying Theorem \ref{eqrep}, we deduce the following corollary:
\begin{corollary}\label{eqrepcor}
The representation $\rho_{(A,\imath,\varphi)}$ is the projectivization of the classical representation $\hat\rho_{\GL_2}:\Gal(\bar\Q/\Q)\rightarrow G(\A_f)E_\alpha^\times$ attached to the abelian variety of $\GL_2$-type $A_{\GL_2}$ modulo $\End_\Q^0(A_{\GL_2})^\times=E_\alpha^\times$.
\end{corollary}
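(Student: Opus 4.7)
The plan is to derive this corollary directly from Theorem \ref{eqrep} by passing to the appropriate quotients. Theorem \ref{eqrep} identifies the product of $\ell$-adic Galois representations as
\[
\hat\rho_{\GL_2}(\sigma)=\rho^\mu_{(A,\imath,\varphi)}(\sigma)\,\alpha^{-1}(\sigma)\in G(\A_f)E_\alpha^\times,
\]
so projecting this equality modulo $E_\alpha^\times=\End^0_\Q(A_{\GL_2})^\times$ annihilates the factor $\alpha^{-1}(\sigma)\in E_\alpha^\times$. Hence the projectivization of $\hat\rho_{\GL_2}$ equals the class of $\rho^\mu_{(A,\imath,\varphi)}(\sigma)$ in $G(\A_f)E_\alpha^\times/E_\alpha^\times$.

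The next step is to identify this target quotient with the target of $\rho_{(A,\imath,\varphi)}$. Since $\End^0(A,\imath)=\Q$ in the non-CM case considered in this section, one has
\[
\rho_{(A,\imath,\varphi)}:\Gal(\bar\Q/\Q)\longrightarrow G(\A_f)/\Q^\times,
\]
and the natural map $G(\A_f)/\Q^\times\longrightarrow G(\A_f)E_\alpha^\times/E_\alpha^\times$ is an isomorphism; indeed, $E_\alpha$ sits diagonally inside $\prod_\ell\GL_2(E_\alpha\otimes\Q_\ell)^{opp}$ as scalar matrices, while $G(\A_f)$ meets the scalars in the center $\A_f^\times$, and $\A_f^\times\cap E_\alpha^\times=\Q^\times$.

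Under this identification, the projectivization of $\hat\rho_{\GL_2}$ becomes the class of $\rho^\mu_{(A,\imath,\varphi)}(\sigma)$ modulo $\Q^\times=\End^0(A,\imath)^\times$, which by definition is precisely $\rho_{(A,\imath,\varphi)}(\sigma)$. This gives the claim. The only step that requires a small verification is the identification $\A_f^\times\cap E_\alpha^\times=\Q^\times$ inside the diagonal embedding into the adelic algebra of $E_\alpha$, but this is a standard fact about embeddings of a number field into its idele group (an element of $E_\alpha^\times$ which has local component lying in $\Q_\ell$ at each prime $\ell$ of $\Q$ is forced to be rational). Everything else is a formal consequence of Theorem \ref{eqrep}, so there is no substantial obstacle beyond bookkeeping of the quotients.
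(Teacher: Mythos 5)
Your argument is correct and is essentially the paper's own (the paper deduces the corollary directly from Theorem \ref{eqrep} without further comment): quotienting the identity $\hat\rho_{\GL_2}(\sigma)=\rho^\mu_{(A,\imath,\varphi)}(\sigma)\alpha^{-1}(\sigma)$ by $E_\alpha^\times$ and identifying $G(\A_f)/\Q^\times\simeq G(\A_f)E_\alpha^\times/E_\alpha^\times$. Your extra verification that $G(\A_f)\cap E_\alpha^\times=\Q^\times$ (really just the linear-algebra fact that $(\Q_\ell\otimes 1)\cap(1\otimes E_\alpha)=\Q$ inside $\Q_\ell\otimes_\Q E_\alpha$) is exactly the bookkeeping the paper leaves implicit.
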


\bibliographystyle{plain}
\bibliography{GalActIsoCla}

\end{document}